\documentclass[11pt,reqno]{amsart}
\usepackage{mathrsfs}
\usepackage{url}
\usepackage{mathtools}
\usepackage{latexsym,epsfig,amssymb,amsmath,amsthm,color,url,bm}
\usepackage[inline,shortlabels]{enumitem}
\usepackage{hyperref}
\usepackage[foot]{amsaddr}
\usepackage{amsmath,amsbsy}
\usepackage{cleveref}
\usepackage{mwe}
\RequirePackage[numbers]{natbib}
\usepackage{mathptmx}
\usepackage[text={16cm,24cm}]{geometry}
\usepackage{xcolor}

\allowdisplaybreaks 
 \numberwithin{equation}{section}
\newtheorem{theorem}{Theorem}[section]
\newtheorem{prop}[theorem]{Proposition}
\newtheorem{lemma}[theorem]{Lemma}

\newcommand\Item[1][]{%
  \ifx\relax#1\relax  \item \else \item[#1] \fi
  \abovedisplayskip=0pt\abovedisplayshortskip=0pt~\vspace*{-\baselineskip}}

\theoremstyle{definition}

\theoremstyle{definition}
\newtheorem{remark}[theorem]{Remark}

\newcommand{\convas}{\stackrel{\text{a.s.}}{\longrightarrow}}
\newcommand{\convd}{\stackrel{\text{D}}{\longrightarrow}}

\DeclareMathOperator{\Prob}{\mathbf{P}}
\DeclareMathOperator{\E}{\mathbf{E}}

\DeclareMathOperator{\Ber}{Bernoulli}

\title[]{Elephant random walk with attributed steps and extractions of random sizes}
\date{}
\author{Sooraj M,\ Moumanti Podder and Archi Roy}
\address{Sooraj M,\ Indian Institute of Science Education and Research (IISER) Pune, Dr.\ Homi Bhabha Road, Pashan, Pune 411008, Maharashtra, India.}
\address{Moumanti Podder, Indian Institute of Science Education and Research (IISER) Pune, Dr.\ Homi Bhabha Road, Pashan, Pune 411008, Maharashtra, India.}
\address{Archi Roy, Indian Institute of Management, Kozhikode 673570, Kerala, INDIA.}
\email{sooraj.m@students.iiserpune.ac.in}
\email{moumanti@iiserpune.ac.in}
\email{archiroy@iimk.ac.in}

\begin{document}
\bibliographystyle{plainnat}

\begin{abstract}
We study a model of market economics wherein the $(n+1)$-st customer, for each $n\geqslant N$, with $N$ being a prespecified positive integer, draws a sample of (random) size $K_{n}$, either with replacement or without, from the customers of the past. Each sampled customer is queried as to which of the two products, A and B, available in the oligopolistic market, they chose, and whether they are satisfied or not with their choice. The $(n+1)$-st customer now employs a stochastic rule, based on the information collected from the sampled customers, to decide which of the two products to buy. The probability that a customer is satisfied with the product they have purchased equals $q_{1}$ when the product is A, and $q_{2}$ when it is B, independent of all else. The resulting stochastic process may be represented as a variant of the celebrated elephant random walk, with the relative performance (in terms of sale) of A with respect to B, up to and including the $n$-th sale, captured by the position $S_{n}$ of the walker at time $n$. We study the almost sure convergence of $S_{n}/n$, as well as the convergence in distribution of suitably scaled versions of $S_{n}$ (where the scaling depends on the regime we are in).   
\end{abstract}

\subjclass[2020]{}

\keywords{elephant random walks; reinforced random walks; random walks with memory; sampling with and without replacement; general reinforcement functions; strong and weak convergence; generalized urn processes}

\maketitle
\section{Introduction}\label{sec:intro}
Consider two competing products, A and B, which have been released to the market around the same time. Motivated by the theory that persistent differences in their performances can emerge purely from the (often non-deterministic) accumulation of feedback, even when all agents are initially identical \citep{denrell2004random}, we study the long-term behaviour of the sale of one of the products relative to the other (including the possibility of the eventual dominance of one product over the other). We assume that at each discrete time-point, henceforth indexed by the set $\mathbb{N}$ of positive integers, a single customer arrives and chooses exactly one of the two products. We denote by $t=0$ the time of release of both the products into the market, and we let $t=n$ represent the time at which the $n$-th sale occurs. We fix $N\in\mathbb{N}$, and we let each of the first $N$ customers, independent of all else, select product A with probability $q$ and product B with probability $(1-q)$ (we remark here that, even if we let the first $N$ customers make their choices in an arbitrary manner, without assumptions of independence or of identical distributions for the choices made, the conclusions drawn in this paper remain unaffected, as all of our results concern themselves with the \emph{asymptotic} behaviour of the stochastic process our model gives rise to). As the market evolves, the $(n+1)$-st customer, for each $n\geqslant N$, bases their decision on a randomly selected sample of past customers, where the size of the sample itself is a random variable $K_{n}$, taking into account both the choices that these sampled customers made and the satisfaction levels that they report. The samples may be drawn either with replacement or without. Each past customer included in the sample drawn is asked to provide a qualitative assessment of their purchase, such as a rating on a fixed scale, which is interpreted by the current, i.e.\ the $(n+1)$-st customer as either positive or negative. For instance, one may consider the quite commonly practised set-up where products receive ratings on a five-point Likert scale -- in such cases, a customer may be classified as `satisfied' if their rating is three stars or higher (out of five), and `dissatisfied' otherwise. Consequently, every historical observation consists of two components: the product chosen and the associated opinion. These observations are aggregated through a smooth reinforcement mechanism that governs the probability with which the current, i.e.\ the $(n+1)$-st customer selects a product. We further consider scenarios where the size $K_{n}$ of the sample drawn by the $(n+1)$-st customer does not conform to a fixed probability distribution, thereby requiring us to work with a sequence of probability distributions $\{\mu_{n}\}$, with $\mu_{n}$ supported on the set $\{1,2,\ldots,n\}$ or some subset thereof. A more detailed formulation of the mathematical model is presented in \Cref{sec:model}. 

We study the relative market dominance process by maintaining a chart which registers a $+1$ whenever product A is purchased, and a $-1$ whenever product B is purchased, so that the sum of all the (signed) tally marks up to and including epoch $n$ indicates the relative sale of product A with respect to product B. This construction naturally leads to a random walk representation, wherein the choice made by the $n$-th customer can be interpreted as the $n$-th step size, denoted by $X_n$ (so that $X_{n}=+1$ if the $n$-th customer selects product A, and $X_{n}=-1$ otherwise), of the walk, and the position of the walker at time $n$, denoted by $S_n=\sum_{i=1}^{n}{X_i}$, encodes the relative market performance of A with respect to B. In this paper, we specifically focus on the strong and weak convergence properties of (suitably scaled versions of) $S_n$. Notably, in contrast to simple random walk models in which each step of the walker is independent of all of its past steps, each step of the walker in our model is heavily influenced by the entirety of its past so far. More specifically, the current step of the walker in our model depends on a finite collection (of random cardinality) of randomly chosen steps from the past, and a certain attribute associated with each step (in our market competition example, this is the opinion of approval / disapproval formed after each purchase). This induces a path-dependence in the random walk, which is reminiscent of the memory-reinforced \emph{elephant random walk model}. Following the terminology of \cite{franchini2025elephant}, we refer to this structure as an elephant random walk with random extractions of random cardinalities, where each extraction further carries with it an associated attribute. At this stage, it is worth noting that a simpler version of this model was studied in \cite{podder2026elephant}, where the sizes of the samples extracted were deterministic (whether constant or varying with $n$) and the steps carried no associated attributes. Allowing samples sizes to be random and incorporating customer-specific binary responses introduce greater analytical challenges, which we have addressed in the subsequent sections.

We further emphasize that, although this work is primarily motivated by the market competition example, the proposed structure can be viewed as a \emph{generalized urn process}. Urn models have a wide range of applicability across diverse fields, including stochastic gene expression \citep{choudhary2020urn}, the dynamics of finite populations of interacting genotypes \citep{benaim2004generalized}, financial modeling \citep{hisakado2025urn}, and the modeling of molecular diffusion \citep{casas2015nonlinear}, to name just a few. It is possible to view our elephant random walk as a generalized urn process with balls of four colours, the first of which corresponds to all those customers who have purchased product A and are satisfied with it, the second of which corresponds to all those customers who have purchased product B and are satisfied with it, and so on. At each step, a random number of balls is chosen uniformly at random from the urn, either with or without replacement, and their colours are recorded. All the sampled balls are then returned to the urn, along with a single \emph{new} ball whose colour is decided according to a probability distribution constructed from the \emph{sampled colour counts} (i.e.\ for each of the four colours, the number of balls of that colour which appear in the sample). We are interested in tracking the evolution of the number of balls of each colour in the urn over time, since a suitable linear combination of these numbers (up to and including the $n$-th step of the urn process) equals $S_n=\sum_{i=1}^{n}{X_i}$. Notably, the existing setup can also be extended to a \textit{lazy} version of the process, in which balls of a fifth colour are considered (corresponding to customers who choose not to purchase any product), or to a \textit{fast-growing} version, in which multiple balls of the same color are added in a single step (equivalently, an incoming customer purchases multiple copies of the same product). While such extensions make tracking the process $\{S_n\}$ considerably more challenging, they also emulate real-world situations more closely, and are, therefore, questions worth pursuing for future research.

\subsection{Organization of the paper} The rest of the paper has been laid out as follows. The model has been formally introduced in \S\ref{sec:model}. A detailed discussion of the existing literature, that is pertinent to the work done in this paper, has been provided in \S\ref{sec:lit_review}. Important definitions and notations have been detailed in \S\ref{sec:notations_definitions} for the reader's convenience. The main results of the paper have been stated in \S\ref{sec:main_results}, with \S\ref{subsec:main_results_strong} dedicated to results pertaining to convergence almost surely, and \S\ref{subsec:main_results_weak} dedicated to results pertaining to convergence in distribution. A few important results from the literature, crucially implemented in proving our own results, have been included in \S\ref{sec:lit_results_useful}. The proofs of all of the results stated in \S\ref{sec:main_results} have been detailed in \S\ref{sec:proofs}, and finally, the Appendix, \S\ref{sec:appendix}, presents several technical lemmas that have been utilized in the proofs of our main results. 

\section{A formal description of the model}\label{sec:model}
For a fixed positive integer $N$, let $X_{1}, X_{2}, \ldots, X_{N}$ be i.i.d.\ Rademacher$(q)$ random variables, i.e.\
\begin{equation}
X_{i}=+1 \text{ with probability } q \quad \text{and} \quad X_{i}=-1 \text{ with probability } (1-q)\nonumber
\end{equation}
for all $i\in[N]$ (henceforth, for any $n\in\mathbb{N}$, we denote by $[n]$ the set $\{1,2,\ldots,n\}$). We also consider parameters $q_{1}, q_{2} \in (0,1)$, and independent random variables $\epsilon_{1}, \epsilon_{2}, \ldots, \epsilon_{N}$, where, for each $i\in[N]$,
\begin{equation}\label{first_few_opinions}
\begin{cases}
{}&\epsilon_{i} \sim \Ber(q_{1}) \text{ conditioned on } X_{i}=+1,\\
{}&\epsilon_{i} \sim \Ber(q_{2}) \text{ conditioned on } X_{i}=-1.
\end{cases}
\end{equation}
So far, we may interpret the set-up as follows: each of the first $N$ customers, due to lack of available data or of awareness regarding the underlying quality of each product, chooses, independent of all else, product A with probability $q$ and product B with probability $(1-q)$. A practical choice for the value of $q$ may be $1/2$, since these initial customers, unaware of the relative performane of one product with respect to another, may make their choices without any bias. The event $\epsilon_{i}=1$ indicates that the $i$-th customer is satisfied with their choice, whereas $\epsilon_{i}=0$ indicates that they are discontented. Provided that a customer selected product A, their probability of feeling satisfied is assumed to equal $q_{1}$, and provided that they chose product B, their probability of feeling satisfied is assumed to equal $q_{2}$. 

We remark here that, if the product qualities are assumed to remain consistent over time, it is not unreasonable to assume that the percentage of customers, out of all those who have chosen to buy product A (respectively, B), who are happy with their purchase, stabilizes over time. Consequently, it is not outlandish to assume (as we do in the model studied in this paper) that, conditioned on purchasing product A (respectively, B), the probability of the customer feeling satisfied equals $q_{1}$ (respectively, $q_{2})$. A somewhat more realistic version of this model would allow the $n$-th customer to be satisfied with probability $q_{1,n}$ if the product purchased is A, and with probability $q_{2,n}$ if the product purchased is B, where each of the sequences $\{q_{1,n}\}$ and $\{q_{2,n}\}$ converges (to, say, $q_{1}$ and $q_{2}$ respectively). The analysis for this slightly more generalized version may be carried out in an analogous manner, with only marginally more technical complications.

We now consider $n\in\mathbb{N}$ with $n\geqslant N$, and we assume that the stochastic process $\{(X_{i},\epsilon_{i}):i\in[n]\}$ has already been defined. At the beginning of the $(n+1)$-st epoch, a sample of size $K_{n}$ is drawn, either with replacement or without, from the set $[n]$, where $K_{n}$ is a random variable whose distribution is independent of the stochastic process so far, and which is supported  
\begin{enumerate*}
\item on some finite subset of $\mathbb{N}$ when the sampling scheme is with replacement,
\item and on the set $[n]$, or a subset thereof, when the sampling scheme is without replacement.
\end{enumerate*}
This is equivalent to drawing a sample of size $K_{n}$ from the set of all past customers. As also mentioned in \S\ref{sec:intro}, we consider both 
\begin{enumerate*}
\item the scenario where $K_{n}$ are independent and identically distributed over all $n\geqslant N$, with their common law denoted by $\mu$,
\item and the scenario where $K_{n}$ are independent over all $n\geqslant N$, but the law $\mu_{n}$ of $K_{n}$ varies with $n$ (with suitable assumptions imposed on the sequence $\{\mu_{n}\}$ of probability distributions).
\end{enumerate*}
Once a realization $k$ of $K_{n}$ has been obtained, let $U_{n,1}, U_{n,2}, \ldots, U_{n,k}$ indicate the indices of the customers included in the sample drawn, so that
\begin{equation}
\Prob\left[U_{n,1}=i_{1},\ldots,U_{n,k}=i_{k}\Big|K_{n}=k\right]=\frac{1}{n^{k}} \text{ for all } \left(i_{1},i_{2},\ldots,i_{k}\right)\in[n]^{k}\label{sample_indices_distribution_with_replacement}
\end{equation}
when the sample is drawn with replacement, and
\begin{equation}
\Prob\left[U_{n,1}=i_{1},\ldots,U_{n,k}=i_{k}\Big|K_{n}=k\right]=\left\{{n\choose k}k!\right\}^{-1} \text{ for all distinct } i_{1},i_{2},\ldots,i_{k}\in[n]\label{sample_indices_distribution_without_replacement}
\end{equation}
when the sample is drawn without replacement. Let $\hat{A}_{n}$ denote the number of satisfied customers who chose product A and who have been included in this sample, and let $\hat{B}_{n}$ denote the number of satisfied customers who chose product B and who have been included in this sample. Formally, 
\begin{equation}
\hat{A}_{n}=\sum_{i=1}^{K_{n}}\chi\left\{X_{U_{n,i}}=+1,\epsilon_{U_{n,i}}=1\right\} \quad \text{and} \quad \hat{B}_{n}=\sum_{i=1}^{K_{n}}\chi\left\{X_{U_{n,i}}=-1,\epsilon_{U_{n,i}}=1\right\},\label{sample_count_defns}
\end{equation}
where $\chi\{A\}$ is the indicator random variable for the event $A$, defined with respect to some sample space. Defining the set 
\begin{equation}
\mathcal{S}=\left\{(x,y)\in[0,1]^{2}:x+y\leqslant 1\right\},\label{domain_defn}
\end{equation}
letting $F:\mathcal{S}\rightarrow[0,1]$ be a function (often referred to as the \emph{reinforcement function} or the \emph{replacement function}, when these models are studied from the perspective of urn processes), and letting $p\in[0,1]$ be a given constant, we now set, conditioned on $K_{n}=k$,
\begin{equation}\label{X_{n+1}_distribution}
X_{n+1}=
\begin{cases}
+1 &\text{with probability } p F\left(k^{-1}\hat{A}_{n},k^{-1}\hat{B}_{n}\right)+(1-p)\left\{1-F\left(k^{-1}\hat{A}_{n},k^{-1}\hat{B}_{n}\right)\right\},\\
-1 &\text{with probability } (1-p)F\left(k^{-1}\hat{A}_{n},k^{-1}\hat{B}_{n}\right)+p\left\{1-F\left(k^{-1}\hat{A}_{n},k^{-1}\hat{B}_{n}\right)\right\}.
\end{cases}
\end{equation} 
Conditioned on $X_{n+1}=+1$, we let $\epsilon_{n+1} \sim \Ber(q_{1})$, and conditioned on $X_{n+1}=-1$, we let $\epsilon_{n+1} \sim \Ber(q_{2})$, where $q_{1}, q_{2}\in(0,1)$ are the same as those mentioned in \eqref{first_few_opinions}. It is worthwhile to note here that $p=1/2$ yields the simple symmetric random walk.

As alluded to in the last paragraph of \S\ref{sec:intro}, the above-mentioned model may also be described as a \emph{balanced} urn process (which indicates that during epoch $n$, for each $n\in\mathbb{N}$, the urn contains precisely $n$ balls in total) involving labeled (or numbered) balls of four different colours. During epoch $n$, there are balls numbered $1,2,\ldots,n$ present in the urn, and each of them possesses one of four possible colours. Let $A_{n}$, $B_{n}$, $C_{n}$ and $D_{n}=n-A_{n}-B_{n}-C_{n}$ indicate, respectively, the counts of balls of colours $1$, $2$, $3$ and $4$, in the urn at the end of epoch $n$ -- in other words, the \emph{configuration} of the urn at the end of epoch $n$ is given by $(A_{n},B_{n},C_{n},D_{n})$. One may interpret 
\begin{enumerate*}
\item balls of colour $1$ to correspond to customers who are satisfied with product A (which is what they chose to purchase), 
\item balls of colour $2$ to correspond to customers who are satisfied with product B, 
\item balls of colour $3$ to correspond to customers who are discontented with product A, and
\item balls of colour $4$ to correspond to customers who are discontented with product B.
\end{enumerate*}
For each $n\geqslant N$, at the beginning of epoch $(n+1)$, a realization $k$ of the random variable $K_{n}$ is generated, and a sample of size $k$ is drawn, uniformly at random, either with replacement or without, from the balls present in the urn. Letting $\hat{A}_{n}$ denote the number of balls of colour $1$ and $\hat{B}_{n}$ the number of balls of colour $2$ present in the sample drawn, we return the sampled balls to the urn, and add to it a new ball, whose colour equals
\begin{align}
{}&1 \text{ with probability } q_{1}\left[p F\left(k^{-1}\hat{A}_{n},k^{-1}\hat{B}_{n}\right)+(1-p)\left\{1-F\left(k^{-1}\hat{A}_{n},k^{-1}\hat{B}_{n}\right)\right\}\right],\label{A_{n+1}_distribution}\\
{}&2 \text{ with probability } q_{2}\left[(1-p)F\left(k^{-1}\hat{A}_{n},k^{-1}\hat{B}_{n}\right)+p\left\{1-F\left(k^{-1}\hat{A}_{n},k^{-1}\hat{B}_{n}\right)\right\}\right],\label{B_{n+1}_distribution}\\
{}&3 \text{ with probability } (1-q_{1})\left[p F\left(k^{-1}\hat{A}_{n},k^{-1}\hat{B}_{n}\right)+(1-p)\left\{1-F\left(k^{-1}\hat{A}_{n},k^{-1}\hat{B}_{n}\right)\right\}\right],\label{C_{n+1}_distribution}\\
{}&4 \text{ with probability } (1-q_{2})\left[(1-p)F\left(k^{-1}\hat{A}_{n},k^{-1}\hat{B}_{n}\right)+p\left\{1-F\left(k^{-1}\hat{A}_{n},k^{-1}\hat{B}_{n}\right)\right\}\right].\label{D_{n+1}_distribution}
\end{align}
Note that, when identifying this urn process with the random walk model described earlier, the position $S_{n}$ of the walker, right after epoch $n$, is given by $S_{n}=(A_{n}+C_{n})-(B_{n}+D_{n})=2(A_{n}+C_{n})-n$, so that understanding the asymptotics of $(A_{n}/n,C_{n}/n)$ (be it the almost sure convergence of $(A_{n}/n,C_{n}/n)$ or the convergence in distribution of a suitably scaled version of this pair) helps shed light on the asymptotics of $S_{n}$ as well. It is also inherently interesting to investigate the asymptotics of $(A_{n}/n,B_{n}/n)$, as this helps us compare the proportion of customers satisfied with product A with the proportion of customers satisfied with product B.

In each of the results deduced in this paper, we shall be in one of the following two scenarios:
\begin{enumerate}[label=(A\arabic*), ref=A\arabic*]
\item \label{Scenario_1} The size $K_{n}$ of the sample drawn by the $(n+1)$-st customer, for each $n\geqslant N$, follows a \emph{common} law $\mu$, assumed to be supported on $[M]$ for some $M\in\mathbb{N}$. Let $\mu(k)=\Prob[K_{n}=k]$ for $k\in[M]$.
\item \label{Scenario_2} The sample size $K_{n}$ follows the law $\mu_{n}$ for each $n\geqslant N$, with $\mu_{n}$ assumed to be supported on some finite interval $I_{n}$ of $\mathbb{N}$ -- when the samples are drawn without replacement, we additionally assume that $I_{n}$ is a subset of $[n]$, for each $n\geqslant N$. Let $\mu_{n}(k)=\Prob[K_{n}=k]$ for each $k\in I_{n}$.
\end{enumerate}
In each of these two scenarios, we assume $\{K_{n}:n\in\mathbb{N},n\geqslant N\}$ to form an independent sequence of random variables. Moreover, $K_{n}$ is assumed to be independent of the stochastic process up to and including epoch $n$ -- formally speaking, $K_{n}$ is independent of $\mathcal{F}_{n}$, where $\mathcal{F}_{n}$ is the $\sigma$-field consisting of all information pertaining to the stochastic process up to and including epoch $n$.  

We mention here two kinds of random experiments that show up in our arguments throughout this paper. One of these is useful when the samples are drawn with replacement, and the other is useful when the samples are drawn without replacement.
\begin{enumerate}[label=(Exp \arabic*), ref=Exp \arabic*]
\item \label{Exp_1} Consider a random experiment, each trial of which may result in one of three possible outcomes: outcome $1$ transpires with probability $x$, outcome $2$ with probability $y$, and outcome $3$ with probability $(1-x-y)$. Fix $n\in\mathbb{N}$ with $n\geqslant N$. If $K_{n}$ i.i.d.\ trials of this experiment are performed, let $V_{i}$ indicate the number of trials that result in outcome $i$, for each $i\in\{1,2,3\}$. Going forward, in various places, we consider $\E[g(V_{1}/K_{n},V_{2}/K_{n})]$, where $g$ is as defined in \eqref{g_defn}.

It is worthwhile to note here that, conditioned on $K_{n}=k$, the random variable $V_{1}$ follows Binomial$(k,x)$, while $V_{2}$ follows Binomial$(k,y)$. Consequently, we have
\begin{equation}\label{exp:1_expectations}
\E\left[\frac{V_{1}}{K_{n}}-x\Big|K_{n}=k\right]=\E\left[\frac{V_{2}}{K_{n}}-y\Big|K_{n}=k\right]=0,
\end{equation}
and
\begin{equation}\label{exp:1_variances}
\E\left[\left(\frac{V_{1}}{K_{n}}-x\right)^{2}\Big|K_{n}=k\right]=\frac{x(1-x)}{k} \quad \text{and} \quad \E\left[\left(\frac{V_{2}}{K_{n}}-y\right)^{2}\Big|K_{n}=k\right]=\frac{y(1-y)}{k}.
\end{equation}

\item \label{Exp_2} Fix $n\in\mathbb{N}$ with $n\geqslant N$. Let an urn contain $r_{1}$ balls of colour $1$, $r_{2}$ balls of colour $2$, and $(n-r_{1}-r_{2})$ balls of colour $3$, so that $(r_{1}/n,r_{2}/n)\in \mathcal{S}_{n}$, where
\begin{equation}
\mathcal{S}_{n}=\left\{(m_{1}/n,m_{2}/n):m_{1},m_{2}\in\mathbb{N}_{0},m_{1}+m_{2}\leqslant n\right\}.\label{S_{n}_defn}
\end{equation}
Suppose a random sample, of random size $K_{n}$, is now drawn, without replacement, from the urn. Let $V_{i}$ indicate the number of balls of colour $i$ that appear in this sample, for each $i\in\{1,2,3\}$. Once again, going forward, the object of interest will be $\E[g(V_{1}/K_{n},V_{2}/K_{n})]$. 

Conditioned on $K_{n}=k$, the random variable $V_{i}$, for each $i\in\{1,2\}$, follows a Hypergeometric distribution in which the total population size is $n$, the number of items corresponding to `success' present in the population is $r_{i}$, and the sample drawn, without replacement, is of size $k$, so that
\begin{equation}\label{exp:2_expectations_variances}
\E\left[\frac{V_{i}}{K_{n}}-\frac{r_{i}}{n}\Big|K_{n}=k\right]=0 \text{ and } \E\left[\left(\frac{V_{i}}{K_{n}}-\frac{r_{i}}{n}\right)^{2}\Big|K_{n}=k\right]=\frac{r_{i}(n-r_{i})(n-k)}{k(n-1)n^{2}} \text{ for each }i\in\{1,2\}.
\end{equation}
\end{enumerate}

\section{Literature review}\label{sec:lit_review}

In this section, we briefly review the recent developments in the literature related to the elephant random walk model. The original formulation of the elephant random walk, introduced in \cite{schutz2004elephants}, is a one-dimensional stochastic process $\{S_n\}_{n \in \mathbb{Z}_+}$ evolving on the integer lattice $\mathbb{Z}$. The walker starts at an initial position $S_0$, typically taken to be $0$, at time $t=0$. At time $t=1$, the walker moves one unit to the right with probability $q$ and one unit to the left with probability $(1-q)$, where $q \in (0,1)$ is fixed. From its second step onward, the dynamics of the walker are governed by its memory of the entire past trajectory. Specifically, at time $t=(n+1)$, for each $n \in \mathbb{N}$, the walker makes a unit displacement that is equal to the random variable $X_{n+1}$, and its position, right after $t=(n+1)$, is given by $S_{n+1} = S_n + X_{n+1}$, where the increment $X_{n+1} \in \{+1,-1\}$ is defined recursively as follows: a past time index $U_n$ is selected uniformly at random from $[n]$, following which we set
\begin{align*}
    X_{n+1} =
\begin{cases}
 X_{U_n}, & \text{with probability } p, \\
- X_{U_n}, & \text{with probability } 1-p,
\end{cases}
\end{align*}
where $p \in [0,1]$ is referred to as the \emph{memory parameter} or the \emph{self-excitation parameter}. 

In the existing literature, several extensions of this classical model have been proposed, each focusing on a particular structural aspect of the walk. For instance, works such as \cite{gut2021variations,gut2022elephant,dhillon2025elephantrandomwalkrandom,nakano2025limit} study variants of the model in which the walker retains the memory of its past only partially (of deterministic or random length), rather than recalling its entire history, while \cite{dedecker2023rates,fan2024cramer,nakano2025elephant} consider versions where the walk evolves via step-sizes that are not necessarily equal to $\pm1$. An interesting direction is to consider the walk on domains beyond the real line. For instance, \cite{laulin2025elephants} studies a rotational variant of the walk on the complex plane $\mathbb{C}$, \cite{shibata2025functional} analyses the walk on higher-dimensional integer lattice structures such as triangular and hexagonal lattices, and \cite{mukherjee2025elephant} investigates the evolution of the classical walk on an infinite Cayley tree. Multidimensional extensions of the elephant random walk have also received considerable attention \citep{bercu2025multidimensionalelephantrandomwalk, bertenghi2022functional, marquioni2019multidimensional,maulik2024asymptotic,ghosh2026limiting}. Other significant variants of the walk can be found in works such as \citep{bercu2022elephant,harbola2014memory,harris2015random,kim2014anomalous,kursten2016random,roy2024elephant, serva2013scaling}. We note that the work of \cite{franchini2025elephant} is closely related to ours, as it also considers multiple extractions; however, memory in that model is incorporated through a majority rule, whereby the walker follows the dominant pattern in the sampled steps. In contrast, our framework allows for a general reinforcement function. Moreover, we build upon the setup introduced in \cite{podder2026elephant} and extend it to a substantially more general and mathematically involved modeling structure.

The long-term behavior of the elephant random walk has been extensively studied \citep{boyer2014solvable, coletti2017central, cressoni2013exact, kumar2010memory}. Within this line of research, \cite{da2013non} provides compelling numerical evidence showing that, for certain ranges of the memory parameter $p$, the displacement distribution deviates from both Gaussian and L\'evy limits. A major methodological advancement has been achieved in \cite{bercu2017martingale} and further developed in \cite{laulin2022new}, where a martingale-based framework has been employed to establish asymptotic results. In particular, weak convergence and large deviation properties have been obtained via the martingale central limit theorem and the law of the iterated logarithm, respectively. This martingale methodology has since been adopted in several subsequent works, including \cite{bercu2021center}, \cite{coletti2017central}, and \cite{roy2024phase}.

In the present paper, we draw inspiration from related studies such as \cite{das2024elephant, maulik2024asymptotic, podder2026elephant}, and instead develop our asymptotic theory using tools from stochastic approximation \citep{duflo2013random}. Stochastic approximation (SA) was first introduced by \cite{robbins1951stochastic} as an iterative procedure for locating the zero of a regression function through successive updates. Since then, SA techniques have been widely adopted across a broad range of scientific disciplines (see \cite{kushner2010stochastic} and \cite{lai2003stochastic} for comprehensive surveys). Early developments in SA primarily addressed almost sure convergence toward a single limiting value, but subsequent work extended the theory to encompass weak convergence results, especially in the contexts of control and optimization \citep{asi2019stochastic, dupuis1985stochastic, dupuis1989stochastic, zhang2024constant}. To our knowledge, \cite{gangopadhyay2019stochastic} was the first to employ SA techniques in the study of urn models with random replacement mechanisms, thereby establishing a framework for applying SA to processes that can be interpreted as generalized urn models, including the random walk setting considered here. This direction was further pursued in \cite{gangopadhyay2022almost}, where a multidimensional extension of the elephant random walk on the nonnegative integer lattice was analysed, incorporating memory-based reinforcement and possible delays at each step. A recent and significant advancement in this area is due to \cite{maulik2024asymptotic}, wherein a stochastic approximation framework for multidimensional elephant random walks has been developed.

\section{Crucial notations and definitions}\label{sec:notations_definitions}
We dedicate \S\ref{sec:notations_definitions} to the introduction of notations and definitions that are going to be utilized throughout this paper, and especially in the statements of our main results in \S\ref{sec:main_results}. 

\subsection{Specific functions arising from our analysis of the model in \S\ref{sec:model}}\label{subsec:notations_1}
Recalling the reinforcement function $F$ defined via \eqref{X_{n+1}_distribution} (or, alternatively, via \eqref{A_{n+1}_distribution}, \eqref{B_{n+1}_distribution}, \eqref{C_{n+1}_distribution} and \eqref{D_{n+1}_distribution}), we define $g:\mathcal{S}\rightarrow[0,1]$ as
\begin{equation}
g(x,y)=pF(x,y)+(1-p)\{1-F(x,y)\} \quad \text{for all } (x,y)\in\mathcal{S},\label{g_defn}
\end{equation}
where $\mathcal{S}$ is the set defined in \eqref{domain_defn}. Since $F$ takes values in $[0,1]$ and $p\in[0,1]$, the function $g$ takes values in $[0,1]$ as well. Going forward, the fact that $g$ is bounded plays an important role in multiple contexts.

Recall that, in our stochastic process, the sample drawn at the beginning of epoch $(n+1)$ is of size $K_{n}$. When we are in Scenario~\eqref{Scenario_1}, we define the polynomial
\begin{equation}
H_{0}(x,y)=\sum_{k=1}^{M}\sum_{i=0}^{k}\sum_{j=0}^{k-i}g\left(\frac{i}{k},\frac{j}{k}\right){k\choose i}{k-i\choose j}x^{i}y^{j}(1-x-y)^{k-i-j}\mu(k) \quad \text{for } (x,y)\in\mathcal{S}.\label{H_{0}_defn}
\end{equation}
Since $g$ takes values in $[0,1]$, it is immediate from \eqref{H_{0}_defn} that 
\begin{equation}\label{H_{0}_bounds}
0\leqslant H_{0}(x,y)\leqslant 1 \text{ for all }(x,y)\in\mathcal{S}.
\end{equation}
Since $H_{0}$ is a polynomial, it is possible to define it over the entire $\mathbb{R}^{2}$ (however, the bounds stated in \eqref{H_{0}_bounds} are only guaranteed to hold for $(x,y)\in\mathcal{S}$). We define the functions $H$ and $h$ on $\mathcal{S}$ as follows:
\begin{align}
{}&H(x,y)=\begin{bmatrix}
q_{1}H_{0}(x,y)\\
q_{2}\left\{1-H_{0}(x,y)\right\}
\end{bmatrix} \quad \text{and} \quad h(x,y)=\begin{bmatrix}
h_{1}(x,y)\\
h_{2}(x,y)
\end{bmatrix}=H(x,y)-\begin{bmatrix}
x\\
y
\end{bmatrix} \quad \text{for all } (x,y)\in\mathcal{S},\label{h_H_defns}
\end{align}
where $q_{1},q_{2}\in(0,1)$ are the parameters mentioned in \eqref{first_few_opinions} and after \eqref{X_{n+1}_distribution}. When in Scenario~\eqref{Scenario_2}, similar to \eqref{h_H_defns}, we define the functions $\hat{H}$ and $\hat{h}$ on $\mathcal{S}$ as
\begin{equation}\label{hat{H}_hat{h}_defn}
\hat{H}(x,y)=\begin{bmatrix}
q_{1}g(x,y)\\
q_{2}\left\{1-g(x,y)\right\}
\end{bmatrix} \quad \text{and} \quad \hat{h}(x,y)=\begin{bmatrix}
\hat{h}_{1}(x,y)\\
\hat{h}_{2}(x,y)
\end{bmatrix}=\hat{H}(x,y)-\begin{bmatrix}
x\\
y
\end{bmatrix} \quad \text{for } (x,y)\in\mathcal{S},
\end{equation}
for $g$ as defined in \eqref{g_defn}. While the functions defined in \eqref{h_H_defns} appear in results pertaining to the set-up described in Scenario~\eqref{Scenario_1}, the functions in \eqref{hat{H}_hat{h}_defn} show up in results that are true in the set-up described in Scenario~\eqref{Scenario_2}.  

\subsection{Notations regarding functions and domains in general}\label{subsec:notations_2}
Given any open subset $U$ of $\mathbb{R}^{d}$, we indicate by $\mathcal{C}^{(i)}(U)$ the set of all real-valued functions $f$, defined on $U$, such that the $i$-th order partial derivatives of $f$ exist and are continuous throughout $U$. In particular, when $i=0$, the notation $\mathcal{C}^{(0)}(U)$ indicates the set of all real-valued continuous functions on $U$. Given subsets $U$ and $V$ of $\mathbb{R}^{d}$ such that $V\subset U$, and a function $f$ defined on $U$, we let $f|_{V}$ indicate the restriction of $f$ to $V$. Often, given a compact subset $K$ of $\mathbb{R}^{d}$ and a function $f$ that is defined on $K$, we write, loosely, $f\in\mathcal{C}^{(i)}(K)$ to mean that there exist 
\begin{enumerate*}
\item \emph{some} open subset $U$ of $\mathbb{R}^{d}$ and 
\item \emph{some} function $\hat{f}$ defined on $U$, 
\end{enumerate*} 
such that $K\subset U$, the function $\hat{f}\in\mathcal{C}^{(i)}(U)$, and $\hat{f}|_{K}\equiv f$. Adopting a standard notation from the physics literature, we let $\dot{x}(t)$ indicate the derivative of the function $x(t)$ with respect to time, $t$.

The \emph{modulus of continuity} of a function $f:V\rightarrow\mathbb{R}$, where $V\subset\mathbb{R}^{d}$, at some $\delta>0$, is defined as $\omega(f;\delta)=\sup\left\{|f(x)-f(y)|:x,y\in V,||x-y||_{2}<\delta\right\}$, where $||\cdot||_{2}$ indicates the usual Euclidean or $L_{2}$ norm. When the partial derivatives of the function $f$ are well-defined throughout the set $V$, we set, for $(u_{1},\ldots,u_{d})\in V$,
\begin{equation}
\nabla f(u_{1},\ldots,u_{d})=\left(\frac{\partial f}{\partial x_{1}}\Big|_{(u_{1},\ldots,u_{d})},\frac{\partial f}{\partial x_{2}}\Big|_{(u_{1},\ldots,u_{d})},\ldots,\frac{\partial f}{\partial x_{d}}\Big|_{(u_{1},\ldots,u_{d})}\right)\label{nabla_defn}
\end{equation}
and we also define, for any $\delta>0$,
\begin{multline}
\omega(\nabla f;\delta)=\sup\{\left|\left|\nabla f(y_{1},\ldots,y_{d})-\nabla f(z_{1},\ldots,z_{d})\right|\right|_{2}:(y_{1},\ldots,y_{d}),(z_{1},\ldots,z_{d})\in V,\\ \left|\left|(y_{1},\ldots,y_{d})-(z_{1},\ldots,z_{d})\right|\right|_{2}<\delta\}.\label{derivative_g_modulus_of_continuity}
\end{multline}

Given two sequences $\{a_{n}:n\in\mathbb{N}\}$ and $\{b_{n}:n\in\mathbb{N}\}$, we write $a_{n}=O(b_{n})$ to mean that there exists some constant $C>0$ such that $a_{n}\leqslant C b_{n}$ for all sufficiently large $n\in\mathbb{N}$, and we write $a_{n}=o(b_{n})$ to mean that the ratio $a_{n}/b_{n}$ converges to $0$ as $n\rightarrow\infty$. We write $a_{n}=\Theta(b_{n})$ to indicate that there exist constants $c_{1}>c_{2}>0$ such that $c_{2}b_{n}\leqslant a_{n}\leqslant c_{1}b_{n}$ for all sufficiently large $n\in\mathbb{N}$.

\section{Main results of this paper}\label{sec:main_results}
The main results of this paper may be classified into two categories: 
\begin{enumerate*}
\item those belonging to \S\ref{subsec:main_results_strong} concern themselves with the almost sure convergence of the sequence $\{(A_{n}/n,B_{n}/n):n\geqslant N\}$, and subsequently, of $\{C_{n}/n:n\geqslant N\}$, where $A_{n}$, $B_{n}$ and $C_{n}$ are as defined in the urn process described in \S\ref{sec:model}, while 
\item those belonging to \S\ref{subsec:main_results_weak} are concerned with the convergence in distribution of suitably scaled versions of $\{(A_{n}/n,B_{n}/n,C_{n}/n):n\geqslant N\}$, where the scaling depends on the \emph{regime} we are in.
\end{enumerate*}

\subsection{Results pertaining to almost sure convergence}\label{subsec:main_results_strong} When we are in Scenario \eqref{Scenario_1}, the almost sure convergence of $\{(n^{-1}A_{n},n^{-1}B_{n})\}$ is addressed by 
\begin{enumerate*}
\item Theorem~\ref{thm:main_1} when the samples are drawn with replacement, and
\item Theorem~\ref{thm:main_3} when the samples are drawn without replacement,
\end{enumerate*}
whereas when we are in Scenario \eqref{Scenario_2}, it is addressed by 
\begin{enumerate*}
\item Theorem~\ref{thm:main_2} when the samples are drawn with replacement, and
\item Theorem~\ref{thm:main_4} when the samples are drawn without replacement.
\end{enumerate*}
Theorems~\ref{thm:main_1_special} and \ref{thm:main_2_special} propose easily verifiable criteria under which $\{(n^{-1}A_{n},n^{-1}B_{n})\}$ converges almost surely to a constant $(x^{*},y^{*})\in\mathcal{S}$ (which may be estimated numerically using the parameter-pair $(q_{1},q_{2})$, and either the function $H_{0}$, defined in \eqref{H_{0}_defn}, or the function $g$, defined in \eqref{g_defn}). Finally, Theorem~\ref{thm:a.s._convergence_C_{n}} addresses the almost sure convergence of $\{n^{-1}C_{n}\}$.

\begin{theorem}\label{thm:main_1}
When we are in Scenario \eqref{Scenario_1}, and the samples are drawn with replacement, the sequence $\left\{\left(n^{-1}A_{n},n^{-1}B_{n}\right)\right\}$ converges, almost surely, to a (possibly sample path dependent) compact connected internally chain transitive invariant set, contained in $\mathcal{S}$, corresponding to the autonomous ODE:
\begin{equation}
\left(\dot{x}(t),\dot{y}(t)\right)=h\left(x(t),y(t)\right) \text{ for } t\geqslant 0, \quad \text{with} \quad \left(x(0),y(0)\right)\in\mathcal{S},\label{ODE_1}
\end{equation}
where the function $h$ is as defined in \eqref{h_H_defns} and $\mathcal{S}$ is as defined in \eqref{domain_defn}.
\end{theorem}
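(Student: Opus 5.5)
We cast the pair $Z_{n}=(n^{-1}A_{n},n^{-1}B_{n})$ as a stochastic approximation scheme with step sizes $1/(n+1)$ and invoke the standard ODE method (Bena\"im's limit-set theorem, as in \cite{duflo2013random}, and as used in \cite{gangopadhyay2019stochastic,maulik2024asymptotic}). Let $\xi_{n+1}=(\chi\{\text{new ball has colour }1\},\chi\{\text{new ball has colour }2\})$. Since $A_{n+1}=A_{n}+\xi_{n+1,1}$ and $B_{n+1}=B_{n}+\xi_{n+1,2}$, we get
\begin{equation*}
Z_{n+1}=Z_{n}+\frac{1}{n+1}\left(\xi_{n+1}-Z_{n}\right)=Z_{n}+\frac{1}{n+1}\left(h(Z_{n})+M_{n+1}\right),\qquad M_{n+1}=\xi_{n+1}-\E\left[\xi_{n+1}\,\middle|\,\mathcal{F}_{n}\right].
\end{equation*}
The second form holds provided we verify $\E[\xi_{n+1}\mid\mathcal{F}_{n}]=H(Z_{n})$, with $H$ as in \eqref{h_H_defns}.

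The key computation is this conditional expectation. Conditioned on $\mathcal{F}_{n}$ and $K_{n}=k$, sampling with replacement makes each of the $k$ draws independently colour $1$ with probability $x=A_{n}/n$, colour $2$ with probability $y=B_{n}/n$, and otherwise with probability $1-x-y$. Hence $(\hat{A}_{n},\hat{B}_{n})$ is multinomial, exactly as in \eqref{Exp_1}. By \eqref{A_{n+1}_distribution} the conditional probability of colour $1$ is $q_{1}\E[g(\hat{A}_{n}/k,\hat{B}_{n}/k)]$. Since $K_{n}$ is independent of $\mathcal{F}_{n}$ with law $\mu$, averaging over $k$ gives exactly $q_{1}H_{0}(x,y)$ by \eqref{H_{0}_defn}. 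Similarly \eqref{B_{n+1}_distribution} gives $q_{2}(1-H_{0}(x,y))$. So the drift is exactly $h(Z_{n})$ with no $o(1)$ remainder term, which is special to the with-replacement, fixed-law setting.

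We then check the hypotheses of the limit-set theorem:
\begin{itemize}
\item[(i)] The step sizes $1/(n+1)$ have divergent sum and square-summable sum.
\item[(ii)] The iterates are bounded: $Z_{n}\in\mathcal{S}$ always, since $A_{n}+B_{n}\leqslant n$, and $\mathcal{S}$ is compact.
\item[(iii)] $h$ is Lipschitz on a neighbourhood of $\mathcal{S}$, because $H_{0}$ is a polynomial (finitely many $k\leqslant M$). This gives a well-posed flow. Moreover $\mathcal{S}$ is positively invariant: on $x=0$ we have $h_{1}=q_{1}H_{0}\geqslant 0$; on $y=0$ we have $h_{2}\geqslant0$; on $x+y=1$ we have $h_{1}+h_{2}=q_{1}H_{0}+q_{2}(1-H_{0})-1<0$, since $q_{1},q_{2}<1$ and \eqref{H_{0}_bounds} holds.
\item[(iv)] $\{M_{n}\}$ is a martingale difference sequence with $\|M_{n+1}\|_{2}\leqslant 2$, so $\sum_{n}(n+1)^{-1}M_{n+1}$ converges almost surely, by the $L^{2}$ martingale convergence theorem.
\end{itemize}
Bena\"im's theorem then yields that the limit set of $\{Z_{n}\}$ is almost surely a compact connected internally chain transitive invariant set of \eqref{ODE_1}. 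It lies in $\mathcal{S}$ because $\mathcal{S}$ is closed and contains all iterates.

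The main obstacle is step (iii), specifically making the ODE formally defined where the theorem requires it. Either we extend $h$ polynomially to $\mathbb{R}^{2}$ and use the invariance of $\mathcal{S}$ under the forward flow, or we note that the theorem's statement only concerns behaviour on the closed set $\mathcal{S}$ where the iterates live. The only other point needing care is the identification of the conditional law of $(\hat{A}_{n},\hat{B}_{n})$ using independence of $K_{n}$ from $\mathcal{F}_{n}$; everything else is routine.
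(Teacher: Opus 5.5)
Your proposal is correct and follows essentially the same route as the paper. Both arguments use the same stochastic-approximation recursion with step sizes $(n+1)^{-1}$, and both identify the drift exactly as $h$ via the multinomial law of $(\hat{A}_{n},\hat{B}_{n})$ together with the independence of $K_{n}$ from $\mathcal{F}_{n}$. Both then check almost sure convergence of $\sum_{n}(n+1)^{-1}\Delta M_{n+1}$ from bounded increments, Lipschitz continuity of $h$ because $H_{0}$ is a polynomial, and boundedness of the iterates, and feed these into the ODE-method limit-set theorem. The paper uses Borkar's version, Theorem~\ref{thm:borkar_a.s.}, and extends $h$ off $\mathcal{S}$ via Kirszbraun's theorem rather than by a polynomial or cut-off extension. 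Your positive-invariance check in (iii) is correct but not needed for this statement; the paper only uses invariance of $\mathcal{S}$ later, in the proof of Theorem~\ref{thm:main_1_special}.
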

\begin{theorem}\label{thm:main_2}
Assume that we are in Scenario~\eqref{Scenario_2}, and that each sample is drawn with replacement. Assume that $g$, defined in \eqref{g_defn}, satisfies one of the following criteria:
\begin{enumerate}[label=(B\arabic*), ref=B\arabic*]
\item \label{thm:main_2:Lipschitz} that $g$ is Lipschitz on $\mathcal{S}$, and that the series 
\begin{equation}\label{thm:main_2:Lipschitz_series_convergence}
\sum_{n\geqslant N}(n+1)^{-1}\E\left[K_{n}^{-1/2}\right] \text{ converges};
\end{equation}
\item \label{thm:main_2:C^{1}} that $g\in\mathcal{C}^{(1)}(\mathcal{S})$, and that the series 
\begin{equation}\label{thm:main_2:C^{1}_series_convergence}
\sum_{n\geqslant N}\frac{1}{n+1}\min\left\{\omega\left(\nabla g;\sqrt{\E\left[K_{n}^{-1}\right]}\right)\sqrt{\E\left[K_{n}^{-1}\right]},\omega\left(\nabla g;\frac{\E\left[K_{n}^{-1}\right]}{\E\left[K_{n}^{-1/2}\right]}\right)\E\left[K_{n}^{-1/2}\right]\right\} \text{ converges};
\end{equation}
\item \label{thm:main_2:C^{2}} that $g\in\mathcal{C}^{(2)}(\mathcal{S})$, and that the series 
\begin{equation}\label{thm:main_2:C^{2}_series_convergence}
\sum_{n\geqslant N}(n+1)^{-1}\E\left[K_{n}^{-1}\right]\text{ converges}.
\end{equation}
\end{enumerate}
Then the sequence $\left\{\left(n^{-1}A_{n},n^{-1}B_{n}\right)\right\}$ converges almost surely to a (possibly sample path dependent) compact connected internally chain transitive invariant set, contained in $\mathcal{S}$, corresponding to the autonomous ODE:
\begin{equation}
\left(\dot{x}(t),\dot{y}(t)\right)=\hat{h}\left(x(t),y(t)\right) \text{ for }t\geqslant 0, \quad \text{with} \quad \left(x(0),y(0)\right)\in\mathcal{S},\label{ODE_2}
\end{equation}
where the function $\hat{h}$ is as defined in \eqref{hat{H}_hat{h}_defn}, and $\mathcal{S}$ is as defined in \eqref{domain_defn}.  
\end{theorem}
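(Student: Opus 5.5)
We cast $Z_{n}=(n^{-1}A_{n},n^{-1}B_{n})$ as a stochastic approximation recursion with a vanishing bias term, and then invoke the standard Kushner--Clark/Bena\"im theorem (as in \cite{duflo2013random} and in the framework of \cite{gangopadhyay2019stochastic,maulik2024asymptotic}). Writing $\xi_{n+1}=(\chi\{\text{colour }1\},\chi\{\text{colour }2\})$ for the indicator vector of the ball added at epoch $n+1$, we have
\begin{equation*}
Z_{n+1}=Z_{n}+\frac{1}{n+1}\left(\xi_{n+1}-Z_{n}\right)=Z_{n}+\frac{1}{n+1}\left\{\hat{h}(Z_{n})+\left(\E[\xi_{n+1}|\mathcal{F}_{n}]-\hat{H}(Z_{n})\right)+M_{n+1}\right\},
\end{equation*}
where $M_{n+1}=\xi_{n+1}-\E[\xi_{n+1}|\mathcal{F}_{n}]$ is a bounded martingale difference. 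The drift $\hat{h}$ is a continuous (indeed Lipschitz under each of (B1)--(B3)) vector field on the compact set $\mathcal{S}$, and $Z_{n}\in\mathcal{S}$ always, so the iterates are bounded. Since $\sum (n+1)^{-2}<\infty$ and $\|M_{n+1}\|\le 2$, the martingale $\sum_{n}(n+1)^{-1}M_{n+1}$ converges almost surely. It therefore remains to show that the bias $\beta_{n}:=\E[\xi_{n+1}|\mathcal{F}_{n}]-\hat{H}(Z_{n})$ satisfies $\sum_{n}(n+1)^{-1}\|\beta_{n}\|<\infty$ almost surely. Summability of the weighted bias suffices for the asymptotic pseudotrajectory property, and the limit-set conclusion then follows from Bena\"im's theorem.

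To control the bias, note that by \eqref{A_{n+1}_distribution}--\eqref{B_{n+1}_distribution}, $\E[\xi_{n+1}|\mathcal{F}_{n}]=(q_{1}\E[g(\hat A_{n}/K_{n},\hat B_{n}/K_{n})|\mathcal{F}_{n}],\,q_{2}\{1-\E[g(\cdot)|\mathcal{F}_{n}]\})$. Conditionally on $\mathcal{F}_{n}$, the pair $(\hat A_{n},\hat B_{n})$ is exactly the outcome of (Exp 1) with $(x,y)=Z_{n}$. Hence
\begin{equation*}
\|\beta_{n}\|\le C\left|\E\left[g\left(V_{1}/K_{n},V_{2}/K_{n}\right)\right]-g(x,y)\right|,
\end{equation*}
and we need deterministic bounds on this quantity, uniform in $(x,y)\in\mathcal{S}$, in each of the three cases.

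\emph{(B1).} Lipschitz continuity together with Jensen's inequality and \eqref{exp:1_variances} gives a bound of order $L\,\E[\sqrt{(x(1-x)+y(1-y))/K_{n}}]\le L'\E[K_{n}^{-1/2}]$. The bias series is then dominated by \eqref{thm:main_2:Lipschitz_series_convergence}.

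\emph{(B3).} A second-order Taylor expansion around $(x,y)$ applies. The linear term vanishes in conditional expectation by \eqref{exp:1_expectations}, and the remainder is at most $\tfrac12\sup\|D^{2}g\|\,\E[\|(V_{1},V_{2})/K_{n}-(x,y)\|^{2}]\le C\,\E[K_{n}^{-1}]$. The bias series is then dominated by \eqref{thm:main_2:C^{2}_series_convergence}. Convexity of $\mathcal{S}$ ensures the segment stays in the domain.

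\emph{(B2)} is the main obstacle. Here we use the mean value form $g(v)-g(z)=\nabla g(\zeta)\cdot(v-z)$ and subtract the mean-zero term $\nabla g(z)\cdot(v-z)$. This yields $|\E g(v)-g(z)|\le\E[\omega(\nabla g;\|v-z\|)\|v-z\|]$. To obtain the two expressions in the minimum, we split on the event $\{\|v-z\|<\delta\}$. On this event the contribution is at most $\omega(\nabla g;\delta)\E\|v-z\|$. Off this event we use $\omega(\nabla g;\cdot)\le 2\sup\|\nabla g\|$, together with $\|v-z\|\chi\{\|v-z\|\ge\delta\}\le\|v-z\|^{2}/\delta$.

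With $\delta=\sqrt{\E K_{n}^{-1}}$, this gives a bound of order $\omega(\nabla g;\delta)\sqrt{\E K_{n}^{-1}}+\sqrt{\E K_{n}^{-1}}$. With $\delta=\E K_{n}^{-1}/\E K_{n}^{-1/2}$, it gives $\omega(\nabla g;\delta)\E K_{n}^{-1/2}+\E K_{n}^{-1/2}$. The stray second terms must be absorbed, which is the delicate point. I would first try to keep the tail term in the form $\E[\omega(\nabla g;\|v-z\|)\|v-z\|\chi\{\|v-z\|\ge\delta\}]$ and exploit the fact that $\omega$ is concave-majorized, so that $\omega(\nabla g;t)\le(1+t/\delta)\omega(\nabla g;\delta)$. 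This gives a tail bound of $\omega(\nabla g;\delta)(\E\|v-z\|+\E\|v-z\|^{2}/\delta)$, and in both choices of $\delta$ this is of the required order. Taking the minimum of the two choices then bounds the bias series by \eqref{thm:main_2:C^{1}_series_convergence}, which completes the verification of the stochastic approximation hypotheses.
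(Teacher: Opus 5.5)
Your proposal is correct and is essentially the paper's proof. It uses the same decomposition of $(n^{-1}A_{n},n^{-1}B_{n})$ into the drift $\hat{h}$, a bounded martingale difference, and the bias $\bigl(q_{1}(H_{n}-g),\,q_{2}(g-H_{n})\bigr)$. The bias is controlled uniformly on $\mathcal{S}$ in the same three ways: Jensen's inequality under (B1), a second-order Taylor expansion under (B3), and, under (B2), the bound $\omega(\nabla g;t)\leqslant(1+t/\delta)\,\omega(\nabla g;\delta)$ with the same two choices of $\delta$; this last bound is exactly what the paper's chaining of points along the segment in its $\mathcal{C}^{(1)}$ Bernstein-error lemma achieves.

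The differences are cosmetic. You appeal to Bena\"{i}m's asymptotic-pseudotrajectory theorem rather than the paper's generalized Borkar theorem; both need only the almost sure convergence of $\sum_{n}(n+1)^{-1}(\Delta M_{n+1}+\delta_{n})$. Your preliminary split on $\{\|v-z\|<\delta\}$ is unnecessary once the modulus bound is applied on the whole sample space. Finally, that bound comes from subadditivity of $\omega(\nabla g;\cdot)$ along segments in the convex set $\mathcal{S}$, not from concavity.
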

Recall, from the definitions introduced in \S\ref{subsec:notations_2}, that for any $i\in\{1,2\}$, by $g\in\mathcal{C}^{(i)}(\mathcal{S})$ we mean that there exist an open set $\mathcal{U}$ of $\mathbb{R}^{2}$ with $\mathcal{S}\subset\mathcal{U}$, and a function $\hat{g}$ defined on $\mathcal{U}$ with $\hat{g}\in \mathcal{C}^{(i)}(\mathcal{U})$, such that $\hat{g}\big|_{\mathcal{S}}\equiv g$. Note that in \eqref{thm:main_2:C^{1}_series_convergence}, the modulus of continuity of $\nabla g$ is computed on $\mathcal{S}$, i.e.\ when calculating $\omega(\nabla g;\delta)$ for $\delta>0$, we consider the supremum of $\left|\left|\nabla g(x_{1},y_{1})-\nabla g(x_{2},y_{2})\right|\right|_{2}$ over all $(x_{1},y_{1}), (x_{2},y_{2})\in\mathcal{S}$, with $\left|\left|(x_{1},y_{1})-(x_{2},y_{2})\right|\right|_{2}<\delta$.  

Note that having $\E[K_{n}^{-1}]=O(n^{-\alpha})$ as $n\rightarrow \infty$, for any $\alpha>0$, is sufficient to ensure that each of \eqref{thm:main_2:Lipschitz_series_convergence}, \eqref{thm:main_2:C^{1}_series_convergence} and \eqref{thm:main_2:C^{2}_series_convergence} is true (the first two require an application of Jensen's inequality). In particular, if we assume the law $\mu_{n}$ of $K_{n}$ to be supported on some subset $I_{n}=\{a_{n},a_{n}+1,\ldots,b_{n}\}$ of $\mathbb{N}$ such that $a_{n}=\Theta(n^{\alpha})$ for some $0<\alpha<1$, then automatically, $\E[K_{n}^{-1}]=O(n^{-\alpha})$. In fact, in Proposition~\ref{prop:K_{n}_distributions}, we allow the law $\mu_{n}$ of $K_{n}$ to belong to a class of very commonly studied probability distributions, and examine whether the various series convergence criteria (stated in Theorem~\ref{thm:main_2}, and in Theorem~\ref{thm:main_6} of \S\ref{subsec:main_results_weak}) are satisfied or not.

\begin{theorem}\label{thm:main_3}
Assume that we are in Scenario~\eqref{Scenario_1}, and that each sample is drawn without replacement. Then the conclusion drawn in Theorem~\ref{thm:main_1} remains true. 
\end{theorem}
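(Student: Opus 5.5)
We will cast the process as a stochastic approximation scheme and show that, without replacement, it differs from the with-replacement recursion of Theorem~\ref{thm:main_1} only by a summable bias. Write $Z_{n}=(n^{-1}A_{n},n^{-1}B_{n})$. Since exactly one ball is added at each epoch, we have
\begin{equation*}
Z_{n+1}=Z_{n}+\frac{1}{n+1}\left\{h(Z_{n})+\left(\E[\xi_{n+1}\mid\mathcal{F}_{n}]-H(Z_{n})\right)+M_{n+1}\right\},
\end{equation*}
where $\xi_{n+1}=(\chi\{\text{new ball has colour }1\},\chi\{\text{new ball has colour }2\})$ and $M_{n+1}=\xi_{n+1}-\E[\xi_{n+1}\mid\mathcal{F}_{n}]$. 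The term $M_{n+1}$ is a bounded martingale difference. The iterates stay in the compact set $\mathcal{S}$. The function $h$ is a polynomial and hence Lipschitz on $\mathcal{S}$. With these facts, the standard Bena\"{\i}m/Borkar stochastic approximation theorem (the one used for Theorem~\ref{thm:main_1}, presumably recorded in \S\ref{sec:lit_results_useful}) yields convergence to a compact connected internally chain transitive invariant set of \eqref{ODE_1}. The only requirement is that the bias $\beta_{n}:=\E[\xi_{n+1}\mid\mathcal{F}_{n}]-H(Z_{n})$ tends to $0$ almost surely; the martingale noise is handled by square-summability of $(n+1)^{-1}$.

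By \eqref{A_{n+1}_distribution} and \eqref{B_{n+1}_distribution}, $\E[\xi_{n+1}\mid\mathcal{F}_{n}]=(q_{1}G_{n},q_{2}(1-G_{n}))$, where
\begin{equation*}
G_{n}=\sum_{k=1}^{M}\mu(k)\,\E\left[g(V_{1}/k,V_{2}/k)\right],
\end{equation*}
and $(V_{1},V_{2})$ is drawn as in \eqref{Exp_2} with $r_{1}=A_{n}$ and $r_{2}=B_{n}$. The function $H_{0}(Z_{n})$ is the same expression with $(V_{1},V_{2})$ multinomial as in \eqref{Exp_1}, with $x=A_{n}/n$ and $y=B_{n}/n$. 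Hence $|\beta_{n}|\leqslant \sum_{k\leqslant M}\mu(k)\,\bigl|\E_{\mathrm{hyp}}g-\E_{\mathrm{mult}}g\bigr|$. Since $g$ is bounded by $1$, each term is at most $2d_{TV}(\text{multivariate hypergeometric}_{k},\text{multinomial}_{k})$. A sample without replacement coincides with a sample with replacement unless two of the $k$ indices coincide, and this happens with probability at most $\binom{k}{2}/n$. Coupling the two samples in this way gives $|\beta_{n}|\leqslant 2\binom{M}{2}/n\to0$ deterministically. Moreover $\sum_{n}(n+1)^{-1}|\beta_{n}|<\infty$, so the bias can even be absorbed directly into the noise.

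Notice that no regularity of $F$ is needed. Only bounded functions of the sample enter the bound, and $H_{0}$ is automatically a polynomial. Also, for $n\geqslant N$ with $n<M$ the law $\mu$ must be restricted to $[n]$, but this affects only finitely many steps and is irrelevant asymptotically. The main obstacle, mild here, is the coupling bound on $\beta_{n}$, which must be uniform in the urn configuration. We will carry it out explicitly by drawing $U_{n,1},\ldots,U_{n,k}$ i.i.d.\ uniform on $[n]$ and, on the collision event, resampling the repeated indices. Off the collision event both schemes then produce the same colour counts.
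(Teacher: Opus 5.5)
Your proposal is correct, and its architecture is the same as the paper's. The paper also writes $(n^{-1}A_{n},n^{-1}B_{n})$ as a stochastic approximation with drift $h$. Its martingale difference is bounded and built from the hypergeometric expectation $F_{n}$ (your $G_{n}$). Its bias is $\delta_{n}=(q_{1}\{F_{n}-H_{0}\},\,q_{2}\{H_{0}-F_{n}\})^{t}$, evaluated at $(n^{-1}A_{n},n^{-1}B_{n})$. It shows $\|\delta_{n}\|_{2}=O(n^{-1})$ uniformly over $\mathcal{S}_{n}$ and concludes via Theorem~\ref{thm:borkar_a.s.}, exactly as your summability remark does.

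The genuine difference is how the $O(n^{-1})$ estimate is obtained. Lemma~\ref{lem:hypergeometric_approx_sample_size_iid} compares the multivariate hypergeometric and trinomial probabilities term by term through a telescoping product inequality. It ends with a constant of order $M^{2}3^{M}\prod_{w<M}(1-w/n)^{-1}$. Your coupling/total-variation argument is shorter, gives the far better constant $\binom{M}{2}$, and makes plain that only $0\leqslant g\leqslant 1$ is used.

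To make the coupling airtight, use one of two standard devices. The first is the sequential scheme: whenever $U_{n,i}$ repeats an earlier index, replace it by a uniform draw from the unused indices, which produces exactly the law \eqref{sample_indices_distribution_without_replacement}. The second is to observe that the without-replacement law of the index vector is the i.i.d.\ law conditioned on having no collision. The two laws are then at total variation distance exactly $1-\prod_{w=0}^{k-1}(1-w/n)\leqslant\binom{k}{2}/n$, and the colour counts are a function of the index vector.

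Finally, the paper's Theorem~\ref{thm:borkar_a.s.} is stated with the summability hypothesis \eqref{borkar_convergence_criterion_generalized}, not with \emph{bias tends to zero}. So it is your observation $\sum_{n}(n+1)^{-1}\|\beta_{n}\|<\infty$, not the weaker one, that plugs directly into the paper's framework.
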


\begin{theorem}\label{thm:main_4}
Assume that we are in Scenario~\eqref{Scenario_2}, and that each sample is drawn without replacement. If one of \eqref{thm:main_2:Lipschitz}, \eqref{thm:main_2:C^{1}} and \eqref{thm:main_2:C^{2}} holds, then the conclusion of Theorem~\ref{thm:main_2} remains true. 
\end{theorem}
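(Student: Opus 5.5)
We will prove Theorem~\ref{thm:main_4} by writing $Z_{n}=(n^{-1}A_{n},n^{-1}B_{n})$ as a stochastic approximation scheme with step size $(n+1)^{-1}$, mean field $\hat{h}$, a martingale-difference noise, and a vanishing bias term. We then invoke the standard ODE-method result (Bena\"{\i}m / Borkar, as recalled in \S\ref{sec:lit_results_useful}): if the step sizes are square-summable but not summable, the iterates are bounded, the mean field is Lipschitz, the noise has bounded conditional second moments, and the bias sums to something convergent along the steps, then $Z_{n}$ converges almost surely to a compact connected internally chain transitive invariant set of \eqref{ODE_2}.

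The recursion is
\begin{equation*}
Z_{n+1}=Z_{n}+\frac{1}{n+1}\left(\hat{h}(Z_{n})+M_{n+1}+\beta_{n}\right),
\end{equation*}
where $M_{n+1}=\xi_{n+1}-\E[\xi_{n+1}\mid\mathcal{F}_{n}]$ and $\xi_{n+1}=(\chi\{\text{colour }1\},\chi\{\text{colour }2\})$ for the new ball. The bias is
\begin{equation*}
\beta_{n}=\E[\xi_{n+1}\mid\mathcal{F}_{n}]-\hat{H}(Z_{n}),
\end{equation*}
so that by \eqref{A_{n+1}_distribution}--\eqref{B_{n+1}_distribution} we have $\beta_{n}=(q_{1}\Delta_{n},-q_{2}\Delta_{n})$ with
\begin{equation*}
\Delta_{n}=\E\left[g(V_{1}/K_{n},V_{2}/K_{n})\mid\mathcal{F}_{n}\right]-g(Z_{n}),
\end{equation*}
where $(V_{1},V_{2})$ come from \eqref{Exp_2} with $r_{1}=A_{n}$, $r_{2}=B_{n}$. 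Several conditions are immediate:
\begin{enumerate*}
\item $Z_{n}\in\mathcal{S}$, which is compact;
\item $|M_{n+1}|\leqslant 2$;
\item $\sum(n+1)^{-2}<\infty$;
\item under each of \eqref{thm:main_2:Lipschitz}--\eqref{thm:main_2:C^{2}}, $g$ is Lipschitz on $\mathcal{S}$, hence so is $\hat{h}$.
\end{enumerate*}
It therefore remains to show $\sum_{n}(n+1)^{-1}|\Delta_{n}|<\infty$ almost surely, uniformly in the configuration.

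The bias is controlled exactly as in the with-replacement case (Theorem~\ref{thm:main_2}). The key input is the hypergeometric variance in \eqref{exp:2_expectations_variances}:
\begin{equation*}
\E\left[(V_{i}/K_{n}-r_{i}/n)^{2}\mid K_{n}=k\right]=\frac{r_{i}(n-r_{i})(n-k)}{k(n-1)n^{2}}\leqslant\frac{1}{4k}\cdot\frac{n-k}{n-1}\leqslant\frac{1}{4k}.
\end{equation*}
This is dominated by the binomial bound $x(1-x)/k$ up to constants, and the mean is unbiased. We treat the three cases as follows.

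Under \eqref{thm:main_2:Lipschitz}, Jensen gives $|\Delta_{n}|\leqslant L\,\E[\|V/K_{n}-Z_{n}\|_{2}]\leqslant C\,\E[K_{n}^{-1/2}]$.

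Under \eqref{thm:main_2:C^{2}}, we Taylor-expand $g$ to second order around $Z_{n}$. The linear term vanishes by unbiasedness, and the remainder is at most $\tfrac12\sup\|D^{2}g\|\,\E\|V/K_{n}-Z_{n}\|^{2}\leqslant C\,\E[K_{n}^{-1}]$.

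Under \eqref{thm:main_2:C^{1}}, we use the mean value form $g(v)-g(z)=\nabla g(\zeta)\cdot(v-z)$ and subtract the vanishing term $\nabla g(z)\cdot(v-z)$. This yields $|\Delta_{n}|\leqslant\E[\omega(\nabla g;\|v-z\|)\|v-z\|]$. We bound it in two ways: split on the event $\|v-z\|<\delta$ versus its complement, or simply use concavity (taking the concave majorant of $\omega$ if necessary) together with Cauchy--Schwarz. Taking $\delta=\sqrt{\E K_{n}^{-1}}$ in one bound and $\delta=\E K_{n}^{-1}/\E K_{n}^{-1/2}$ in the other produces the two terms in the minimum of \eqref{thm:main_2:C^{1}_series_convergence}, up to constants.

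The one place where care is needed is this $\mathcal{C}^{(1)}$ case. The bound via the modulus of continuity must be made uniform in the configuration, and the split must reproduce exactly the form of \eqref{thm:main_2:C^{1}_series_convergence}, including replacing $\omega$ by a concave majorant, which is comparable up to a factor $2$. Since Theorem~\ref{thm:main_2}'s proof presumably carries out precisely this computation using only unbiasedness and the $1/k$ second-moment bound, we plan to reuse it verbatim, substituting the hypergeometric moments. With that bias estimate in hand, applying the stochastic approximation theorem concludes the proof. The additional restriction $I_{n}\subset[n]$ only ensures the sample is well defined and does not enter the estimates.
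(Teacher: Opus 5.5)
Your proposal is correct and matches the paper's proof: the same stochastic-approximation decomposition with drift $\hat{h}$ and bias $\bigl(q_{1}(E_{n}-g),\,q_{2}(g-E_{n})\bigr)$ evaluated at $(n^{-1}A_{n},n^{-1}B_{n})$. The bias is bounded using the unbiasedness and hypergeometric second moments of Experiment~(Exp~2), exactly as in the appendix lemmas, and summability of $\sum_{n}(n+1)^{-1}\|\delta_{n}\|_{2}$ then feeds the generalized Borkar theorem.

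In the $\mathcal{C}^{(1)}$ case, the essential ingredient is the subadditivity bound $\omega(\nabla g;t)\leqslant(1+t/\delta)\,\omega(\nabla g;\delta)$ on the convex set $\mathcal{S}$. The paper proves it by chaining along the segment; your concave-majorant route also supplies it. A plain split on $\{\|v-z\|<\delta\}$ without this bound would leave a term of order $\sup\|\nabla g\|\,\delta^{-1}\E\|v-z\|^{2}$, which does not reproduce the form of the series criterion in (B2).
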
 

It is often nicer to impose additional restrictions -- for instance, on the function $H_{0}$, defined in \eqref{H_{0}_defn}, when we are in Scenario~\eqref{Scenario_1}, or on the function $g$, defined in \eqref{g_defn}, when we are in Scenario~\eqref{Scenario_2} -- in order to obtain more \emph{specific} results, such as the almost sure convergence of $\{(n^{-1}A_{n},n^{-1}B_{n})\}$ to a constant value (i.e.\ a degenerate random variable) in $\mathcal{S}$. This is what has been accomplished in Theorems~\ref{thm:main_1_special} and \ref{thm:main_2_special}. 
\begin{theorem}\label{thm:main_1_special}
Suppose we are in Scenario~\eqref{Scenario_1}, and let $H_{0}$, defined in \eqref{H_{0}_defn}, satisfy the criterion:
\begin{equation}
(q_{1}+q_{2})\max\left\{\sup\left\{\left|\frac{\partial}{\partial x}H_{0}(x,y)\right|:(x,y)\in\mathcal{S}\right\},\sup\left\{\left|\frac{\partial}{\partial y}H_{0}(x,y)\right|:(x,y)\in\mathcal{S}\right\}\right\}<1.\label{contraction_criterion_H_{0}}
\end{equation}
Then there exists a unique root, say $(x^{*},y^{*})$, of the function $h$, defined in \eqref{h_H_defns}, in the set $\mathcal{S}$, and irrespective of whether the samples are drawn with replacement or without, the sequence $\{(n^{-1}A_{n},n^{-1}B_{n})\}$ converges almost surely to $(x^{*},y^{*})$. The same conclusion remains true if $g\in\mathcal{C}^{(1)}(\mathcal{S})$ and 
\begin{equation}
(q_{1}+q_{2})\max\left\{\sup\left\{\left|\frac{\partial}{\partial x}g(x,y)\right|:(x,y)\in\mathcal{S}\right\},\sup\left\{\left|\frac{\partial}{\partial y}g(x,y)\right|:(x,y)\in\mathcal{S}\right\}\right\}<1.\label{contraction_criterion_g}
\end{equation}
\end{theorem}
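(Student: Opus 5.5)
The plan is to reduce everything to Theorems~\ref{thm:main_1} and \ref{thm:main_3}. These already tell us that, whether the samples are drawn with replacement or without, $\{(n^{-1}A_{n},n^{-1}B_{n})\}$ converges almost surely to a compact connected internally chain transitive invariant set $L\subset\mathcal{S}$ of the ODE \eqref{ODE_1}. It therefore suffices to show two things: that $h$ has a unique zero $z^{*}=(x^{*},y^{*})$ in $\mathcal{S}$, and that the only compact invariant subset of $\mathcal{S}$ for \eqref{ODE_1} is $\{z^{*}\}$.

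\emph{Step 1: $H$ is a contraction of $\mathcal{S}$ in the $\ell_{1}$ norm.} Write $c$ for the left side of \eqref{contraction_criterion_H_{0}}, so $c<1$. First, $H$ maps $\mathcal{S}$ into $\mathcal{S}$. By \eqref{H_{0}_bounds}, both components $q_{1}H_{0}$ and $q_{2}(1-H_{0})$ are nonnegative, and their sum is at most $\max\{q_{1},q_{2}\}\leqslant 1$. Next, $\mathcal{S}$ is convex, so the mean value theorem along the segment joining $u,v\in\mathcal{S}$ gives
\[
|H_{0}(u)-H_{0}(v)|\leqslant \sup_{\mathcal{S}}|\partial_{x}H_{0}|\,|u_{1}-v_{1}|+\sup_{\mathcal{S}}|\partial_{y}H_{0}|\,|u_{2}-v_{2}|.
\]
Combining this with $\|H(u)-H(v)\|_{1}=(q_{1}+q_{2})|H_{0}(u)-H_{0}(v)|$ yields $\|H(u)-H(v)\|_{1}\leqslant c\|u-v\|_{1}$. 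Banach's fixed point theorem on the compact set $\mathcal{S}$ then gives a unique fixed point $z^{*}$ of $H$, which is exactly the unique root of $h$ in $\mathcal{S}$.

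\emph{Step 2: every compact invariant set is $\{z^{*}\}$.} Let $z(t)$ be a solution of \eqref{ODE_1} in $\mathcal{S}$. Since $\dot z = H(z)-z$ with $H(z)\in\mathcal{S}$ and $\mathcal{S}$ convex, solutions started in $\mathcal{S}$ stay in $\mathcal{S}$. Using $z^{*}=H(z^{*})$, for small $s>0$ we have
\[
z(t+s)-z^{*}=(1-s)(z(t)-z^{*})+s(H(z(t))-H(z^{*}))+o(s).
\]
Taking $\ell_{1}$ norms gives the upper Dini derivative bound $D^{+}\|z(t)-z^{*}\|_{1}\leqslant -(1-c)\|z(t)-z^{*}\|_{1}$, and hence $\|z(t)-z^{*}\|_{1}\leqslant e^{-(1-c)t}\|z(0)-z^{*}\|_{1}$. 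Now take a point $w$ of a compact invariant set $L$. For each $t>0$, invariance means $w=z(t)$ for some solution with $z(0)\in L$. Therefore $\|w-z^{*}\|_{1}\leqslant e^{-(1-c)t}\,\mathrm{diam}_{1}(\mathcal{S})$ for every $t$, which forces $w=z^{*}$. So $L=\{z^{*}\}$, and this proves the first assertion.

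\emph{Step 3: the criterion \eqref{contraction_criterion_g} implies \eqref{contraction_criterion_H_{0}}.} Here I would show $\sup_{\mathcal{S}}|\partial_{x}H_{0}|\leqslant\sup_{\mathcal{S}}|\partial_{x}g|$, and similarly in $y$. By \eqref{H_{0}_defn}, $H_{0}=\sum_{k}\mu(k)B_{k}g$, where $B_{k}g$ is the degree-$k$ Bernstein polynomial of $g$ on the simplex. The standard forward-difference identity gives
\[
\partial_{x}B_{k}g(x,y)=\sum_{i+j\leqslant k-1}k\Bigl[g\bigl(\tfrac{i+1}{k},\tfrac{j}{k}\bigr)-g\bigl(\tfrac{i}{k},\tfrac{j}{k}\bigr)\Bigr]\binom{k-1}{i}\binom{k-1-i}{j}x^{i}y^{j}(1-x-y)^{k-1-i-j}.
\]
This follows by differentiating each basis term and reindexing. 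The two grid points in each bracket lie in $\mathcal{S}$ and differ by $1/k$ in the $x$-direction, so each factor $k[\cdots]$ is an average of $\partial_{x}g$ along a segment inside $\mathcal{S}$. Its absolute value is therefore at most $\sup_{\mathcal{S}}|\partial_{x}g|$. The degree-$(k-1)$ basis weights are nonnegative on $\mathcal{S}$ and sum to $1$, and the $\mu(k)$ also sum to $1$, so the bound passes to $H_{0}$. Steps 1 and 2 then apply verbatim.

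I expect the main obstacle to be Step 3, specifically establishing the multivariate Bernstein derivative identity cleanly, including the reindexing and checking that the boundary terms vanish. Step 2 requires some care with the Dini derivative, since the $\ell_{1}$ norm is not differentiable. If that becomes awkward, I would fall back on the integrated form: apply variation of constants to $\dot z=-z+H(z)$ and then use Gronwall's inequality.
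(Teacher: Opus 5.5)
Your proof is correct. Steps~1 and~3 are essentially the paper's arguments: the $\ell_{1}$-contraction of $H$ via the mean value theorem, and the forward-difference identity for the Bernstein derivative. The paper also invokes Brouwer's theorem for existence before Banach's, which your version shows is unnecessary.

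Step~2 takes a genuinely different route. The paper proceeds in three stages:
\begin{enumerate}
\item It proves positive invariance of $\mathcal{S}$ via Nagumo's theorem and Bouligand tangent cones.
\item It combines the Poincar\'e--Bendixson theorem with Bendixson's criterion, using that the divergence $q_{1}\partial_{x}H_{0}-q_{2}\partial_{y}H_{0}-2$ is negative on $\mathcal{S}$, to show every $\omega$-limit set in $\mathcal{S}$ is $\{(x^{*},y^{*})\}$.
\item It uses internal chain transitivity in a two-case argument to exclude every other candidate limit set.
\end{enumerate}
You instead convert the contraction directly into the decay bound $\|z(t)-z^{*}\|_{1}\leqslant e^{-(1-c)t}\|z(0)-z^{*}\|_{1}$ along trajectories. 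You then use only the backward invariance of the limit set $L$.

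Your route has several advantages. It is shorter and not tied to planar topology, so it works unchanged in any dimension. It gives an explicit exponential rate and never needs chain transitivity. It also avoids option~(3) of Poincar\'e--Bendixson, which the paper dismisses merely by noting that the equilibrium is unique; uniqueness alone does not exclude homoclinic orbits to that equilibrium.

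You do not actually need positive invariance of $\mathcal{S}$ in your final step. For $w\in L$, the trajectory through $w$ stays in $L\subset\mathcal{S}$ for all negative times by the invariance of $L$ itself, so the contraction estimate applies along it.

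What the paper's planar approach offers in principle is generality in a different direction. Bendixson's criterion needs only negative divergence, which is far weaker than contraction. So that method could extend to settings with a unique equilibrium where $H$ is not a contraction. Under the hypothesis actually assumed here, your argument is the more economical and more robust one.
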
 

\begin{theorem}\label{thm:main_2_special}
In Scenario~\eqref{Scenario_2}, if one of \eqref{thm:main_2:C^{1}} and \eqref{thm:main_2:C^{2}} is true and \eqref{contraction_criterion_g} is satisfied, there exists a unique root, say $(x^{*},y^{*})$, of the function $\hat{h}$, defined in \eqref{hat{H}_hat{h}_defn}, in the set $\mathcal{S}$, and irrespective of whether the samples are drawn with replacement or without, the sequence $\{(n^{-1}A_{n},n^{-1}B_{n})\}$ converges almost surely to $(x^{*},y^{*})$.
\end{theorem}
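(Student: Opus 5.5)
We would deduce Theorem~\ref{thm:main_2_special} from Theorems~\ref{thm:main_2} and \ref{thm:main_4}, in three steps: verify that their hypotheses hold, identify the limit set, and conclude. Note first that $g\in\mathcal{C}^{(1)}(\mathcal{S})$ under either \eqref{thm:main_2:C^{1}} or \eqref{thm:main_2:C^{2}}. Since $\mathcal{S}$ is compact, $\nabla g$ is then bounded on $\mathcal{S}$, and because $\mathcal{S}$ is convex the mean value theorem makes $g$ Lipschitz there. The relevant series criterion is assumed in each case. Hence Theorem~\ref{thm:main_2} (with replacement) and Theorem~\ref{thm:main_4} (without replacement) both apply. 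They give that $\{(n^{-1}A_n,n^{-1}B_n)\}$ converges almost surely to a compact connected internally chain transitive invariant set of the ODE \eqref{ODE_2} contained in $\mathcal{S}$.

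\textbf{Unique root.} Set $L=\max\{\sup_{\mathcal{S}}|\partial_x g|,\sup_{\mathcal{S}}|\partial_y g|\}$, so that $(q_1+q_2)L<1$. Since $\hat{H}(x,y)=(q_1 g,\,q_2(1-g))$, the mean value theorem along segments in the convex set $\mathcal{S}$ gives, for $u=(x,y)$ and $v=(x',y')$,
\[
\|\hat{H}(u)-\hat{H}(v)\|_1\leqslant (q_1+q_2)\,|g(u)-g(v)|\leqslant (q_1+q_2)L\,\|u-v\|_1 .
\]
So $\hat{H}$ is a strict contraction in the $\ell_1$ norm. Moreover $\hat{H}$ maps $\mathcal{S}$ into itself: both coordinates are nonnegative and
\[
q_1 g+q_2(1-g)\leqslant \max\{q_1,q_2\}<1 .
\]
Banach's fixed point theorem then gives a unique $(x^*,y^*)\in\mathcal{S}$ with $\hat{H}(x^*,y^*)=(x^*,y^*)$, that is, a unique root of $\hat{h}$ in $\mathcal{S}$.

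\textbf{Identifying the limit set.} It remains to show that the only compact internally chain transitive invariant set inside $\mathcal{S}$ is $\{(x^*,y^*)\}$. We would use the Lyapunov function
\[
V(u)=\|u-u^*\|_1 ,
\]
or a smoothed version such as the squared weighted norm below. For a solution $u(t)$ of $\dot u=\hat{H}(u)-u$ that stays in $\mathcal{S}$ (forward invariance holds because $\hat{H}(\mathcal{S})\subset\mathcal{S}$ and $\mathcal{S}$ is convex), write
\[
u(t+s)-u^* = e^{-s}\bigl(u(t)-u^*\bigr)+\int_0^s e^{-(s-r)}\bigl(\hat{H}(u(t+r))-\hat{H}(u^*)\bigr)\,dr .
\]
This yields a Grönwall estimate with rate $-(1-(q_1+q_2)L)$:
\[
\|u(t)-u^*\|_1\leqslant e^{-(1-(q_1+q_2)L)t}\,\|u(0)-u^*\|_1 .
\]
Consequently $u^*$ is globally asymptotically stable on $\mathcal{S}$, with its basin containing all of $\mathcal{S}$. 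By the standard fact (Benaïm) that an internally chain transitive invariant set inside the basin of a globally attracting equilibrium, with strict Lyapunov function $V$ whose zero set is $\{u^*\}$, must be $\{u^*\}$, the limit set is $\{(x^*,y^*)\}$. This gives the almost sure convergence, irrespective of the sampling scheme.

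The main obstacle is this last step: making the Lyapunov argument rigorous on the boundary of $\mathcal{S}$, where the ODE is only defined on $\mathcal{S}$, and with a nonsmooth norm. If the $\ell_1$ norm causes trouble, we would try the weighted Euclidean function
\[
V(x,y)=(x-x^*)^2/q_1^2+(y-y^*)^2/q_2^2 ,
\]
or else pass directly through the exponential contraction estimate above, which already excludes any invariant set other than the fixed point, with no smoothness of $V$ needed.
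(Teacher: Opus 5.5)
Your proposal is correct. Its first half matches the paper's argument: you check the hypotheses of Theorems~\ref{thm:main_2} and \ref{thm:main_4}, then use the $\ell_1$-contraction of $\hat H$ and Banach's theorem. The paper also invokes Brouwer, which Banach makes redundant.

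The two routes diverge in identifying the limit set. The paper proceeds in four steps:
\begin{enumerate}
\item it gets positive invariance of $\mathcal{S}$ from Nagumo's tangent-cone theorem;
\item it rules out closed orbits by Poincar\'{e}--Bendixson together with Bendixson's criterion, since the divergence $q_1\partial_x g-q_2\partial_y g-2$ is $<-1$;
\item it concludes that $\omega(x,y)=\{(x^*,y^*)\}$ for every $(x,y)\in\mathcal{S}$;
\item it runs a case analysis on the chain condition (I2).
\end{enumerate}

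You instead derive, by variation of constants and Gr\"{o}nwall, the estimate $\|u(t)-u^*\|_1\le e^{-(1-c)t}\|u(0)-u^*\|_1$ with $c=(q_1+q_2)L<1$. This is a stronger and more elementary tool. It uses no planar topology. It also gives Lyapunov stability with a uniform rate, not merely pointwise $\omega$-limits, and that uniformity is what stops $\epsilon$-chains from escaping a neighbourhood of $u^*$. Because the estimate comes from an integral identity, it needs neither smoothness of $\|\cdot\|_1$ nor any control of the extended vector field outside $\mathcal{S}$. So the step you flag as the main obstacle is not one, and the Bena\"{\i}m/Lyapunov detour can be dropped.

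You should, however, write out the one line you only assert. An internally chain transitive invariant set $A\subset\mathcal{S}$ is invariant for all $t\in\mathbb{R}$. Hence each $v\in A$ equals $\Phi_T(w)$ for some $w\in A$, where $\Phi_T$ is the time-$T$ flow map, and the forward orbit of $w$ stays in $A\subset\mathcal{S}$. It follows that $\|v-u^*\|_1\le 2e^{-(1-c)T}$ for every $T>0$, so $A=\{u^*\}$.

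Likewise, forward invariance of $\mathcal{S}$ needs no Nagumo. Your own formula writes $u(s)$ as $e^{-s}u(0)$ plus $(1-e^{-s})$ times a weighted average of values of $\hat H$. So Picard iterates started in $\mathcal{S}$ never leave the closed convex set $\mathcal{S}$.
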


\begin{theorem}\label{thm:a.s._convergence_C_{n}}
\sloppy Under Scenario~\eqref{Scenario_1}, the sequence $\{n^{-1}C_{n}\}$ converges almost surely. Under Scenario~\eqref{Scenario_2}, $\{n^{-1}C_{n}\}$ converges almost surely as long as $\{(n^{-1}A_{n},n^{-1}B_{n})\}$ does and one of the following is true:
\begin{enumerate}[label=(C\arabic*), ref=C\arabic*]
\item \label{thm:C_{n}_a.s.:g_Lipschitz} the function $g$, defined in \eqref{g_defn}, is Lipschitz on $\mathcal{S}$, and $\E[K_{n}^{-1/2}]\rightarrow 0$ as $n\rightarrow\infty$;
\item \label{thm:C_{n}_a.s.:g_C^{1}} the function $g\in\mathcal{C}^{(1)}(\mathcal{S})$, and, as $n\rightarrow\infty$:
\begin{equation}
\min\{\omega(\nabla g;(\E[K_{n}^{-1}])^{1/2})(\E[K_{n}^{-1}])^{1/2},\omega(\nabla g;\E[K_{n}^{-1}]/\E[K_{n}^{-1/2}])\E[K_{n}^{-1/2}]\}\rightarrow 0;\nonumber
\end{equation}
\item \label{thm:C_{n}_a.s.:g_C^{2}} the function $g\in\mathcal{C}^{(2)}(\mathcal{S})$, and $\E[K_{n}^{-1}]\rightarrow 0$ as $n\rightarrow\infty$.
\end{enumerate}
Furthermore, if $\{(n^{-1}A_{n},n^{-1}B_{n})\}$ converges almost surely to some $(x^{*},y^{*})\in\mathcal{S}$, then $\{n^{-1}C_{n}\}$ converges almost surely to $(1-q_{1})x^{*}/q_{1}$.
\end{theorem}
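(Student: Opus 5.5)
The plan is to write $C_{n}$ as a sum of martingale differences plus a predictable part that is controlled by $A_{n}$, and then use the a.s.\ behaviour of $A_{n}/n$. Let $\mathcal{F}_{n}$ be as in \S\ref{sec:model}. The $(n+1)$-st ball has colour $1$ with conditional probability $q_{1}P_{n}$ and colour $3$ with conditional probability $(1-q_{1})P_{n}$, where $P_{n}=\Prob[X_{n+1}=+1\mid\mathcal{F}_{n}]$. Hence the combination
\begin{equation}
\xi_{n+1}=q_{1}\chi\{\text{colour }3\}-(1-q_{1})\chi\{\text{colour }1\}\nonumber
\end{equation}
satisfies $\E[\xi_{n+1}\mid\mathcal{F}_{n}]=0$ and $|\xi_{n+1}|\leqslant 1$. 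Summing over epochs gives
\begin{equation}
q_{1}C_{n}-(1-q_{1})A_{n}=\bigl(q_{1}C_{N}-(1-q_{1})A_{N}\bigr)+M_{n}, \qquad M_{n}=\sum_{m=N}^{n-1}\xi_{m+1}.\nonumber
\end{equation}
$\{M_{n}\}$ is a martingale with bounded increments, so $\sum_{m}\xi_{m+1}/(m+1)$ converges a.s.\ by the $L^{2}$ martingale convergence theorem, and Kronecker's lemma then gives $M_{n}/n\to0$ a.s. Therefore
\begin{equation}
n^{-1}C_{n}-\frac{1-q_{1}}{q_{1}}\,n^{-1}A_{n}\to 0 \quad \text{a.s.}\nonumber
\end{equation}
This identity does not depend on the scenario, on the sampling scheme, or on any regularity of $g$. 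It settles the final claim at once: if $(n^{-1}A_{n},n^{-1}B_{n})\to(x^{*},y^{*})$, then $n^{-1}C_{n}\to(1-q_{1})x^{*}/q_{1}$.

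So the a.s.\ convergence of $n^{-1}C_{n}$ reduces to the a.s.\ convergence of $n^{-1}A_{n}$. Under Scenario~\eqref{Scenario_2} this is part of the hypothesis. The conditions \eqref{thm:C_{n}_a.s.:g_Lipschitz}--\eqref{thm:C_{n}_a.s.:g_C^{2}} are not needed for the argument above. I would nonetheless show they make the drift of $A_{n}$ asymptotically $q_{1}g(A_{n}/n,B_{n}/n)$, so that the limit of $n^{-1}C_{n}$ can be identified through $\hat{h}$. For this I would bound $|\E[g(V_{1}/K_{n},V_{2}/K_{n})\mid\mathcal{F}_{n}]-g(A_{n}/n,B_{n}/n)|$ using the moment identities \eqref{exp:1_variances} and \eqref{exp:2_expectations_variances}. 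A Lipschitz bound with Jensen's inequality gives $O(\E[K_{n}^{-1/2}])$. A first-order Taylor expansion, in which the mean-zero linear term vanishes, gives the $\omega(\nabla g;\cdot)$ bound in \eqref{thm:C_{n}_a.s.:g_C^{1}}. A second-order expansion gives $O(\E[K_{n}^{-1}])$. Each of these bounds tends to $0$ under the corresponding condition.

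The main obstacle is Scenario~\eqref{Scenario_1}, where no convergence of $n^{-1}A_{n}$ is assumed. Theorems~\ref{thm:main_1} and \ref{thm:main_3} only guarantee convergence of $(n^{-1}A_{n},n^{-1}B_{n})$ to an internally chain transitive invariant set of \eqref{ODE_1}, so I need to show that every such set is a single point. The structure of $h$ helps here: $q_{2}h_{1}+q_{1}h_{2}=q_{1}q_{2}-(q_{2}x+q_{1}y)$. Along the ODE, $z=q_{2}x+q_{1}y$ therefore satisfies $\dot z=q_{1}q_{2}-z$, and the line $\{z=q_{1}q_{2}\}$ is globally attracting. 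Any compact invariant set, and hence the limit set, must lie on this line segment. The flow restricted to the segment is one-dimensional, because the polynomial $H_{0}$ makes the vector field smooth. In one dimension, internally chain transitive sets are equilibria or connected arcs of equilibria, and on an arc of equilibria $h=0$ forces $x=q_{1}H_{0}$. I expect the hardest step to be excluding oscillation of the iterates along such an arc. To handle it, I would use the one-dimensional reduction $u_{n}=A_{n}/n$ on the line and argue as for one-dimensional stochastic approximation with a polynomial drift of constant sign away from its zero set. If a full argument is not available, I would invoke the standard result (e.g.\ Bena\"{\i}m) that for a one-dimensional scalar flow the limit set of the SA iterates is a single point a.s. That gives a.s.\ convergence of $n^{-1}A_{n}$, and hence of $n^{-1}C_{n}$.
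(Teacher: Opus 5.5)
Your key identity is correct, and it takes a different and more economical route than the paper. The paper writes $n^{-1}C_{n}$ as a Ces\`aro average of the conditional probabilities $(1-q_{1})f_{i}(A_{i}/i,B_{i}/i)$. It then has to do three things:
\begin{enumerate*}
\item control $f_{i}$ uniformly, which is where (C1)--(C3) and the Bernstein/hypergeometric lemmas enter;
\item identify the limit as $(1-q_{1})H_{0}(\overline{A},\overline{B})$ or $(1-q_{1})g(\overline{A},\overline{B})$;
\item use that $(x^{*},y^{*})$ is a fixed point of $H$ or $\hat{H}$.
\end{enumerate*}
You pair colour $3$ with colour $1$; both are driven by the same $P_{n}$, so $f_{n}$ cancels entirely and $n^{-1}C_{n}-\frac{1-q_{1}}{q_{1}}n^{-1}A_{n}\to0$ a.s. in every scenario and sampling scheme. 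This proves the Scenario~(A2) claim and the final claim without (C1)--(C3), and without needing the limit to be a root of $h$ or $\hat{h}$. You are right that those conditions are superfluous.

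The gap is in the unconditional claim under Scenario~(A1). You correctly note that Theorems~\ref{thm:main_1} and~\ref{thm:main_3} only give convergence to an internally chain transitive (ICT) set. The paper's own proof simply asserts that they give convergence to a point, so on this you are more careful than the paper. Your reduction is sound. Since $q_{2}h_{1}+q_{1}h_{2}=q_{1}q_{2}-(q_{2}x+q_{1}y)$, every compact invariant subset of $\mathcal{S}$ lies on the segment $q_{2}x+q_{1}y=q_{1}q_{2}$, $0\leqslant x\leqslant q_{1}$. On it the flow is $\dot{x}=\phi(x)$ with $\phi(x)=q_{1}H_{0}\bigl(x,q_{2}(1-x/q_{1})\bigr)-x$; it is one-dimensional because this line is invariant, not because $H_{0}$ is smooth. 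However, you leave the decisive step open, and your fallback is not what the theory gives. For a scalar flow, an interval of equilibria is itself ICT, so ICT theory alone does not force a singleton. Scalar convergence arguments also do not transfer directly to your two-dimensional iterates, which only approach the line; you would need control of the off-line drift that you do not supply.

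The missing observation is that arcs of equilibria cannot occur here. $\phi$ is a polynomial, and it is not identically zero, because $\phi(0)$ and $\phi(q_{1})$ cannot both vanish:
\begin{enumerate*}
\item $\phi(0)=q_{1}H_{0}(0,q_{2})$ vanishes only if $g(0,j/k)=0$ for all $j$ and all $k$ in the support of $\mu$, in particular $g(0,0)=0$;
\item $\phi(q_{1})=q_{1}\{H_{0}(q_{1},0)-1\}$ vanishes only if $g(i/k,0)=1$ for all such $i,k$, in particular $g(0,0)=1$.
\end{enumerate*}
Both implications use that the relevant Bernstein weights are strictly positive, since $q_{1},q_{2}\in(0,1)$. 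Hence $\phi$ has finitely many zeros on $[0,q_{1}]$. Take the strict Lyapunov function $V(x)=-\int_{0}^{x}\phi(s)\,ds$, which satisfies $\frac{d}{dt}V=-\phi^{2}$ and takes finitely many values on the equilibria. Then every compact connected ICT set is a single equilibrium. (Alternatively, note directly that an $\epsilon$-chain cannot cross leftward through a neighbourhood where $\phi>0$.) This gives a.s. convergence of $(n^{-1}A_{n},n^{-1}B_{n})$ in Scenario~(A1), and your identity then yields a.s. convergence of $n^{-1}C_{n}$. No analysis of oscillation along arcs is needed.
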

It is quite intuitive, in fact, that $n^{-1}C_{n}$ should converge almost surely whenever $n^{-1}A_{n}$ does. This is because, of the customers who choose product A, the proportion of those who are satisfied is roughly equal to $	q_{1}$, and the proportion of those who are discontented is roughly equal to $(1-q_{1})$, in the long run. Consequently, for large values of $n$, it is not unreasonable to expect that $C_{n}/A_{n}$ will take values close to $(1-q_{1})/q_{1}$. 

\subsection{Results pertaining to convergence in distribution}\label{subsec:main_results_weak} In what follows, we address convergence in distribution of suitably scaled versions of the sequence $\{(n^{-1}A_{n},n^{-1}B_{n},n^{-1}C_{n})\}$, where the scaling depends on the \emph{regime} that we are in. Theorem~\ref{thm:main_5} concerns itself with Scenario~\eqref{Scenario_1}, whereas Theorem~\ref{thm:main_6} addresses Scenario~\eqref{Scenario_2}. Note, from \eqref{H_{0}_defn}, that $H_{0}\in\mathcal{C}^{(1)}(\mathcal{S})$. If \eqref{contraction_criterion_H_{0}} holds, or if $g\in\mathcal{C}^{(1)}(\mathcal{S})$ and \eqref{contraction_criterion_g} holds, then, as shown in the proof of Theorem~\ref{thm:main_1_special}), the function $h$ has a unique root, denoted $(x^{*},y^{*})$, in $\mathcal{S}$.
\begin{theorem}\label{thm:main_5}
Consider Scenario~\eqref{Scenario_1}. Assume that \eqref{contraction_criterion_H_{0}} holds, or that $g\in\mathcal{C}^{(1)}(\mathcal{S})$ and \eqref{contraction_criterion_g} holds. Let
\begin{equation}\label{thm:main_5:scalar_defns}
\alpha^{*}=\frac{\partial H_{0}}{\partial x}\Big|_{(x^{*},y^{*})}, \quad \beta^{*}=\frac{\partial H_{0}}{\partial y}\Big|_{(x^{*},y^{*})}, \quad \rho=\min\left\{1,1-q_{1}\alpha^{*}+q_{2}\beta^{*}\right\}, \quad z^{*}=\frac{(1-q_{1})x^{*}}{q_{1}},
\end{equation} 
where $(x^{*},y^{*})$ is the unique root of $h$ in $\mathcal{S}$. We now define the matrices
\begin{equation}\label{thm:main_5:matrix_defns}
\Gamma=\begin{bmatrix}
x^{*}(1-x^{*}) & -x^{*}y^{*} & -x^{*}z^{*}\\
-x^{*}y^{*} & y^{*}(1-y^{*}) & -y^{*}z^{*}\\
-x^{*}z^{*} & -y^{*}z^{*} & z^{*}(1-z^{*})
\end{bmatrix}, \quad T=\begin{bmatrix}
-\beta^{*} & 0 & -q_{1}\\
\alpha^{*} & 0 & q_{2}\\
0 & 1 & -1+q_{1}
\end{bmatrix}, \quad \overline{T}=\begin{bmatrix}
-q_{1} & -1/\alpha^{*} & 0\\
q_{2} & 0 & 0\\
-1+q_{1} & 0 & 1
\end{bmatrix}.
\end{equation}
Next, we define, writing $\kappa=q_{1}\alpha^{*}-q_{2}\beta^{*}$ for the sake of brevity:
\begin{align}
A_{1,1}=&\kappa^{-2}\{q_{2}^{2}x^{*}(1-x^{*})-2q_{1}q_{2}x^{*}y^{*}+q_{1}^{2}y^{*}(1-y^{*})\},\nonumber\\
A_{1,2}=&-\kappa^{-2}(1-q_{1})\{q_{2}\alpha^{*}x^{*}(1-x^{*})-(\alpha^{*}q_{1}+\beta^{*}q_{2})x^{*}y^{*}+q_{1}\beta^{*}y^{*}(1-y^{*})\}-\kappa^{-1}z^{*}(q_{2}x^{*}+q_{1}y^{*}),\nonumber\\
A_{1,3}=&-\kappa^{-2}\{q_{2}\alpha^{*}x^{*}(1-x^{*})-(\alpha^{*}q_{1}+\beta^{*}q_{2})x^{*}y^{*}+q_{1}\beta^{*}y^{*}(1-y^{*})\},\nonumber\\
A_{2,2}=&\kappa^{-2}(1-q_{1})^{2}\{{\alpha^{*}}^{2}x^{*}(1-x^{*})-2\alpha^{*}\beta^{*}x^{*}y^{*}+{\beta^{*}}^{2}y^{*}(1-y^{*})\}+2\kappa^{-1}(1-q_{1})z^{*}\left(\alpha^{*}x^{*}+\beta^{*}y^{*}\right)+z^{*}(1-z^{*}),\nonumber\\
A_{2,3}=&\kappa^{-2}(1-q_{1})\{{\alpha^{*}}^{2}x^{*}(1-x^{*})-2\alpha^{*}\beta^{*}x^{*}y^{*}+{\beta^{*}}^{2}y^{*}(1-y^{*})\}+\kappa^{-1}z^{*}(\alpha^{*}x^{*}+\beta^{*}y^{*}),\nonumber\\
A_{3,3}=&\kappa^{-2}\{{\alpha^{*}}^{2}x^{*}(1-x^{*})-2\alpha^{*}\beta^{*}x^{*}y^{*}+{\beta^{*}}^{2}y^{*}(1-y^{*})\}.\nonumber
\end{align}
We further define
\begin{align}
B_{1,1}=&q_{2}^{-2}y^{*}(1-y^{*}),\quad B_{1,2}=q_{2}^{-1}\alpha^{*}x^{*}y^{*}-q_{2}^{-2}q_{1}\alpha^{*}y^{*}(1-y^{*}),\quad B_{1,3}=q_{2}^{-2}(1-q_{1})y^{*}(1-y^{*})-q_{2}^{-1}y^{*}z^{*},\nonumber\\
B_{2,2}=&{\alpha^{*}}^{2}\{x^{*}(1-x^{*})-2q_{2}^{-1}q_{1}x^{*}y^{*}+q_{2}^{-2}q_{1}^{2}y^{*}(1-y^{*})\},\nonumber\\
B_{2,3}=&q_{2}^{-1}\alpha^{*}\{(1-q_{1})x^{*}+q_{1}z^{*}\}y^{*}-q_{2}^{-2}\alpha^{*}q_{1}(1-q_{1})y^{*}(1-y^{*})+\alpha^{*}x^{*}z^{*},\nonumber\\
B_{3,3}=& q_{2}^{-2}(1-q_{1})^{2}y^{*}(1-y^{*})-2q_{2}^{-1}(1-q_{1})y^{*}z^{*}+z^{*}(1-z^{*}).\nonumber
\end{align}
\begin{enumerate}[label=(D\arabic*), ref=D\arabic*]
\item \label{thm:main_5:regime_1} When $\rho=1/2$, or equivalently, $\kappa=1/2$, we have, with $A_{3,3}$ as defined above:
\begin{equation}\label{thm:main_5:regime_1:conclusion}
\sqrt{\frac{n}{\log n}}\left(\begin{bmatrix}
n^{-1}A_{n}\\
n^{-1}B_{n}\\
n^{-1}C_{n}
\end{bmatrix}-\begin{bmatrix}
x^{*}\\
y^{*}\\
(1-q_{1})x^{*}/q_{1}
\end{bmatrix}\right)\convd N\left(0,A_{3,3}\begin{bmatrix}
q_{1}^{2} & -q_{1}q_{2} & q_{1}(1-q_{1})\\
-q_{1}q_{2} & q_{2}^{2} & -q_{2}(1-q_{1})\\
q_{1}(1-q_{1}) & -q_{2}(1-q_{1}) & (1-q_{1})^{2}
\end{bmatrix}\right).
\end{equation}
\item \label{thm:main_5:regime_2} When $0<\rho<1/2$, or, equivalently, $\kappa>1/2$, we have
\begin{equation}
n^{\rho}\left(\begin{bmatrix}
n^{-1}A_{n}\\
n^{-1}B_{n}\\
n^{-1}C_{n}
\end{bmatrix}-\begin{bmatrix}
x^{*}\\
y^{*}\\
(1-q_{1})x^{*}/q_{1}
\end{bmatrix}\right)-W\left(T^{-1}\right)^{t}\begin{bmatrix}
0\\
0\\
1
\end{bmatrix}\convas 0\label{thm:main_5:regime_2:conclusion}
\end{equation}
for some finite, scalar-valued random variable $W$, and $T$ as in \eqref{thm:main_5:matrix_defns}.
\item \label{thm:main_5:regime_3} We now consider $1/2<\rho\leqslant 1$. When $\kappa\in(-1,1/2)\setminus\{0\}$, we have
\begin{equation}\label{thm:main_5:regime_3.1:conclusion} 
\sqrt{n}\left(\begin{bmatrix}
n^{-1}A_{n}\\
n^{-1}B_{n}\\
n^{-1}C_{n}
\end{bmatrix}-\begin{bmatrix}
x^{*}\\
y^{*}\\
(1-q_{1})x^{*}/q_{1}
\end{bmatrix}\right)\convd N\left(0,T\begin{bmatrix}
A_{1,1} & A_{1,2} & C_{1,3}\\
A_{1,2} & A_{2,2} & C_{2,3}\\
C_{1,3} & C_{2,3} & C_{3,3}
\end{bmatrix}T^{t}\right),
\end{equation}
for $A_{i,j}$ as defined above, $C_{i,j}=(1-\kappa)^{-1}A_{i,j}$ for each $(i,j)\in\{(1,3),(2,3)\}$, and $C_{3,3}=(1-2\kappa)^{-1}A_{3,3}$, and $T$ is as defined in \eqref{thm:main_5:matrix_defns}. When $\alpha^{*}=\beta^{*}=0$ (which forces $\kappa=0$), we have 
\begin{equation}\label{thm:main_5:regime_3.2:conclusion} 
\sqrt{n}\left(\begin{bmatrix}
n^{-1}A_{n}\\
n^{-1}B_{n}\\
n^{-1}C_{n}
\end{bmatrix}-\begin{bmatrix}
x^{*}\\
y^{*}\\
(1-q_{1})x^{*}/q_{1}
\end{bmatrix}\right)\convd N\left(0,\Gamma\right), \text{ with }\Gamma\text{ as defined in \eqref{thm:main_5:matrix_defns}}.
\end{equation}
Finally, when $\kappa=0$ but $\alpha^{*}\neq 0$ (so that $\beta^{*}\neq 0$ as well), we have, for $B_{i,j}$ defined above:
\begin{equation}\label{thm:main_5:regime_3.3:conclusion} 
\sqrt{n}\left(\begin{bmatrix}
n^{-1}A_{n}\\
n^{-1}B_{n}\\
n^{-1}C_{n}
\end{bmatrix}-\begin{bmatrix}
x^{*}\\
y^{*}\\
(1-q_{1})x^{*}/q_{1}
\end{bmatrix}\right)\convd N\left(0,\overline{T}\begin{bmatrix}
B_{1,1}+2B_{1,2}+2B_{2,2} & B_{1,2}+B_{2,2} & B_{1,3}+B_{2,3}\\
B_{1,2}+B_{2,2} & B_{2,2} & B_{2,3}\\
B_{1,3}+B_{2,3} & B_{2,3} & B_{3,3}
\end{bmatrix}\overline{T}^{t}\right).
\end{equation}
\end{enumerate}
\end{theorem}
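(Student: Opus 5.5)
We plan to write the triple $Z_{n}=(n^{-1}A_{n},n^{-1}B_{n},n^{-1}C_{n})^{t}$ as a three-dimensional stochastic approximation scheme and apply a central limit theorem for such recursions around a stable equilibrium. The tool is the standard SA weak-convergence result, in the form used by \cite{maulik2024asymptotic} and in the Pelletier/Duflo style; we assume it is recorded in \S\ref{sec:lit_results_useful}.

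\textbf{Setting up the recursion.} Set $\chi_{n+1}=(\chi\{\text{colour }1\},\chi\{\text{colour }2\},\chi\{\text{colour }3\})$ for the $(n+1)$-st ball. Then
\begin{equation*}
Z_{n+1}=Z_{n}+\frac{1}{n+1}\left(\mathcal{H}(Z_{n})-Z_{n}+\Delta M_{n+1}\right),
\end{equation*}
where $\mathcal{H}(x,y,z)=(q_{1}H_{0}(x,y),\,q_{2}\{1-H_{0}(x,y)\},\,(1-q_{1})H_{0}(x,y))$ and $\Delta M_{n+1}=\chi_{n+1}-\E[\chi_{n+1}\mid\mathcal{F}_{n}]$ is a bounded martingale difference.

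In Scenario~\eqref{Scenario_1} with replacement the drift is exact, since $\E[g(\hat{A}_{n}/K_{n},\hat{B}_{n}/K_{n})\mid\mathcal{F}_{n}]=H_{0}(A_{n}/n,B_{n}/n)$ by \eqref{Exp_1}. Without replacement, a hypergeometric-versus-multinomial comparison gives an error of order $O(1/n)$ uniformly, by an appendix lemma. An $O(1/n)$ perturbation is negligible at every scale considered here, including $\sqrt{n/\log n}$ and $n^{\rho}$ with $\rho\le 1$.

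By Theorems~\ref{thm:main_1_special} and \ref{thm:a.s._convergence_C_{n}}, $Z_{n}\to z_{\star}=(x^{*},y^{*},z^{*})$ almost surely. It follows that the conditional covariance of $\Delta M_{n+1}$ converges almost surely to the multinomial covariance at $z_{\star}$, namely the matrix $\Gamma$. This holds because the new ball has colour probabilities $(q_{1}H_{0},q_{2}(1-H_{0}),(1-q_{1})H_{0})$, which equal $(x^{*},y^{*},z^{*})$ at the root.

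\textbf{Linearization.} The Jacobian of $\mathcal{H}-\mathrm{Id}$ at $z_{\star}$ is
\begin{equation*}
D=\begin{bmatrix} q_{1}\alpha^{*}-1 & q_{1}\beta^{*} & 0\\ -q_{2}\alpha^{*} & -q_{2}\beta^{*}-1 & 0\\ (1-q_{1})\alpha^{*} & (1-q_{1})\beta^{*} & -1\end{bmatrix}.
\end{equation*}
Its eigenvalues are $-1,-1$ and $\kappa-1$, since the upper $2\times2$ block has trace $\kappa-2$ and determinant $1-\kappa$. The matrix $T$ from \eqref{thm:main_5:matrix_defns} diagonalizes $D$ when $\kappa\neq0$, i.e.\ $T^{-1}DT=\mathrm{diag}(-1,-1,\kappa-1)$. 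We will check this by direct multiplication, with the third column of $T$ being the $(\kappa-1)$-eigenvector.

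When $\kappa=0$, $D$ is not diagonalizable if $\alpha^{*}\neq0$. In that case $\overline{T}$ brings it to a Jordan form with a $2\times2$ block for $-1$. When $\alpha^{*}=\beta^{*}=0$, $D=-\mathrm{Id}$.

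The smallest decay rate is $\rho=\min\{1,1-\kappa\}$, and the regimes split according to this rate. Note that \eqref{contraction_criterion_H_{0}} forces $|\kappa|<1$, so $D$ is Hurwitz.

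\textbf{Applying the SA CLT in each regime.} In the coordinates $W_{n}=T^{-1}(Z_{n}-z_{\star})$ the system decouples into two components with rate $1$ and one component with rate $1-\kappa$.

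\begin{enumerate}[label=(\roman*)]
\item For $1-\kappa>1/2$ (part \eqref{thm:main_5:regime_3}), the Pelletier/Duflo theorem gives $\sqrt{n}W_{n}\convd N(0,\Sigma)$. Here $\Sigma$ solves the Lyapunov equation $(D'+\tfrac12 I)\Sigma+\Sigma(D'+\tfrac12 I)^{t}+T^{-1}\Gamma T^{-t}=0$ with $D'$ diagonal. Its entries are $[T^{-1}\Gamma T^{-t}]_{ij}/(\lambda_{i}+\lambda_{j}-1)$ up to sign, which produces exactly the factors $1$, $(1-\kappa)^{-1}$ and $(1-2\kappa)^{-1}$ multiplying the $A_{i,j}$. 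The $A_{i,j}$ are then identified as the entries of $T^{-1}\Gamma T^{-t}$. Undoing the change of variables gives \eqref{thm:main_5:regime_3.1:conclusion}.

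\item In the Jordan case $\kappa=0$, $\alpha^{*}\ne0$, the Lyapunov equation with the Jordan block yields the combinations $B_{1,1}+2B_{1,2}+2B_{2,2}$ and so on, giving \eqref{thm:main_5:regime_3.3:conclusion}. When $D=-\mathrm{Id}$ the covariance is simply $\Gamma$, giving \eqref{thm:main_5:regime_3.2:conclusion}.

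\item For $\kappa=1/2$ (part \eqref{thm:main_5:regime_1}), the critical component $W_{n}^{(3)}$ has rate exactly $1/2$. The logarithmic CLT gives $\sqrt{n/\log n}\,W^{(3)}_{n}\convd N(0,A_{3,3})$, while the other two components are $O(n^{-1/2})$ and vanish at this scale. Mapping back through the third column of $T$, namely $(-q_{1},q_{2},-1+q_{1})$, gives a rank-one covariance matrix, which is the stated matrix up to sign.

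\item For $\kappa>1/2$ (part \eqref{thm:main_5:regime_2}), $n^{1-\kappa}W^{(3)}_{n}$ is, up to summable errors, an $L^{2}$-bounded martingale. Indeed $\prod_{k\le n}(1+(\kappa-1)/k)\asymp n^{\kappa-1}$, and the variance sum $\sum_{k} k^{2(1-\kappa)}k^{-2}$ converges because $2\kappa>1$. Hence $n^{1-\kappa}W^{(3)}_{n}$ converges almost surely to some $W$, while the other components times $n^{\rho}$ tend to $0$ almost surely by the law of the iterated logarithm. This yields \eqref{thm:main_5:regime_2:conclusion}.
\end{enumerate}

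\textbf{Main obstacle.} The conceptual structure is standard. The main obstacles are, first, controlling the nonlinear remainder $\mathcal{H}(Z_{n})-\mathcal{H}(z_{\star})-D(Z_{n}-z_{\star})=O(\|Z_{n}-z_{\star}\|^{2})$, which needs an a.s.\ rate $\|Z_{n}-z_{\star}\|=O(n^{-\rho}\sqrt{\log n})$ or $O(\sqrt{\log\log n/n})$ and makes the remainder negligible at each scaling. Second, there is the bookkeeping of verifying that the Lyapunov solutions reproduce the stated $A_{i,j}$ and $B_{i,j}$. We expect the Jordan-block case \eqref{thm:main_5:regime_3.3:conclusion} to be the most delicate computation, because the Jordan block couples two coordinates in the covariance integral $\int_{0}^{\infty}e^{s(D+I/2)}\Gamma e^{s(D+I/2)^{t}}ds$. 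For that case we would compute this integral directly rather than solve the Lyapunov system.
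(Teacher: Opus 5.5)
Your route is the paper's: the same three-dimensional recursion with drift $G$, the $O(n^{-1})$ hypergeometric correction, conditional covariances converging to $\Gamma$, eigenvalues $-1,-1,\kappa-1$ with $T$ (resp.\ $\overline{T}$) giving the diagonal (resp.\ Jordan) form, and $\Sigma=\int_0^\infty e^{(J+I/2)u}\Gamma e^{(J+I/2)^tu}\,du$. In that formula $A_{i,j}$ and $B_{i,j}$ are exactly the entries of $T^{-1}\Gamma(T^{-1})^t$ and $\overline{T}^{-1}\Gamma(\overline{T}^{-1})^t$, so (D1) and (D3) go through as you describe.

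The paper quotes Theorems~\ref{thm:zhang_2.1}--\ref{thm:zhang_2.3} in all three regimes. Because $H_0$ is a polynomial, (G2) holds for every $\eta<1$, so those theorems already supply the a.s.\ rate you list as your main obstacle. Only your hand-made martingale argument for (D2) needs that rate proved separately. A small slip: with eigenvalues $\lambda_i\in\{-1,-1,\kappa-1\}$ the Lyapunov solution is $\Sigma_{ij}=-[T^{-1}\Gamma(T^{-1})^t]_{ij}/(\lambda_i+\lambda_j+1)$. Your denominator $\lambda_i+\lambda_j-1$ is right only if $\lambda_i$ denote the rates $1,1,1-\kappa$.

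\textbf{The genuine problem is the last line of your step (iv).} From $Z_n-z_\star=TW_n$, $n^{\rho}W^{(3)}_n\to W$ and $n^{\rho}W^{(1)}_n,n^{\rho}W^{(2)}_n\to0$, you actually obtain
\[
n^{\rho}(Z_n-z_\star)-W\,Te_3\convas 0,\qquad Te_3=(-q_1,\,q_2,\,q_1-1)^t .
\]
This is the right eigenvector of $J_G(\theta^*)$ for the eigenvalue $\kappa-1$.

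The displayed \eqref{thm:main_5:regime_2:conclusion} instead uses $(T^{-1})^te_3=-\kappa^{-1}(\alpha^*,\beta^*,0)^t$. That is a left eigenvector, since $(\alpha^*,\beta^*,0)J_G(\theta^*)=(\kappa-1)(\alpha^*,\beta^*,0)$. The two vectors are never parallel: their third coordinates are $q_1-1\neq0$ and $0$. Since $W$ is nondegenerate in general, your argument does not yield the displayed formula.

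Your direction is the correct one, and the displayed vector is a slip in the statement. Two checks confirm this:
\begin{enumerate}
\item $C_n-\tfrac{1-q_1}{q_1}A_n$ is a martingale with bounded increments. Hence $n^{\rho}\{(C_n/n-z^*)-\tfrac{1-q_1}{q_1}(A_n/n-x^*)\}\convas0$ for $\rho<1/2$. When $\alpha^*\neq0$, this rules out any nonzero limit along $(T^{-1})^te_3$, whose third coordinate vanishes but whose first does not.
\item (D1) itself places the degenerate covariance along $Te_3$: $A_{3,3}\,Te_3(Te_3)^t$ is exactly the matrix in \eqref{thm:main_5:regime_1:conclusion}, with no sign ambiguity.
\end{enumerate}

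The paper reaches $(T^{-1})^te_3$ by using $T$ defined through \eqref{similarity_eq_general}, i.e.\ $T^{-1}J_G(\theta^*)T$ in Jordan form, inside Theorem~\ref{thm:zhang_2.2}. With that $T$, the vector there should be $Te_j$; this looks like a row-vector versus column-vector convention mix-up. So finish (D2) with $W\,Te_3$ and say explicitly that this corrects the stated vector, rather than asserting the displayed formula.
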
 

We now address the convergence in distribution of suitably scaled versions of $\{(n^{-1}A_{n},n^{-1}B_{n},n^{-1}C_{n})\}$ when we are in Scenario~\eqref{Scenario_2}. The reader should bear in mind that, to maintain the analogy with Theorem~\ref{thm:main_5}, nearly every object of interest in the statement (and proof) of Theorem~\ref{thm:main_6} has been assigned the same name / notation as the analogous object of interest in the statement of Theorem~\ref{thm:main_5}. For instance, $(x^{*},y^{*})$ in Theorem~\ref{thm:main_6} refers to the (unique) root of $\hat{h}$ in $\mathcal{S}$, as opposed to Theorem~\ref{thm:main_5} where $(x^{*},y^{*})$ refers to the (unique) root of $h$ in $\mathcal{S}$, where $\hat{h}$ is as defined in \eqref{hat{H}_hat{h}_defn} and $h$ is as defined in \eqref{h_H_defns}.
\begin{theorem}\label{thm:main_6}
Consider Scenario~\eqref{Scenario_2}. We consider the following different regimes:
\begin{enumerate}[label=(E\arabic*), ref=E\arabic*]
\item \label{thm:main_6:regime_1} Suppose that \eqref{thm:C_{n}_a.s.:g_C^{2}} and \eqref{contraction_criterion_g} are satisfied, and, almost surely as $n\rightarrow\infty$, 
\begin{equation}\label{thm:main_6:regime_1:series_criteria}
\text{either }\sum_{i=N}^{n}\E[K_{i}^{-1}]=o(\sqrt{n/\log n}) \quad \text{or} \quad \sum_{i=N}^{n}\E[K_{i}^{-1}](i+1)^{-1/2}=o(\sqrt{\log n}).
\end{equation}
Then the function $\hat{h}$ has a unique root, $(x^{*},y^{*})$, in $\mathcal{S}$. Let $\alpha^{*}=\partial g/\partial x|_{(x^{*},y^{*})}$ and $\beta^{*}=\partial g/\partial y|_{(x^{*},y^{*})}$. Let the scalars $\rho$ and $z^{*}$ be defined as in \eqref{thm:main_5:scalar_defns}, the matrices $\Gamma$, $T$ and $\overline{T}$ be defined as in \eqref{thm:main_5:matrix_defns}, and $\kappa$, $A_{i,j}$ and $B_{i,j}$ for each $(i,j)\in\{(1,1),(1,2),\ldots,(3,3)\}$, and $C_{i,j}$ for each $(i,j)\in\{(1,3),(2,3),(3,3)\}$, be defined as in the statement of Theorem~\ref{thm:main_5}, but with $x^{*}$, $y^{*}$, $\alpha^{*}$ and $\beta^{*}$ as defined in the context of Theorem~\ref{thm:main_6}. If, now, $\rho=1/2$, or equivalently, $\kappa=1/2$, the conclusion given by \eqref{thm:main_5:regime_1:conclusion} remains true.

\item \label{thm:main_6:regime_2} Let \eqref{thm:C_{n}_a.s.:g_C^{2}} and \eqref{contraction_criterion_g} be satisfied, let $0<\rho<1/2$, or, equivalently, $\kappa>1/2$, and let  
\begin{equation}\label{thm:main_6:regime_2:series_criteria}
\sum_{i=N}^{n}\E[K_{i}^{-1}]=o(n^{1-\rho-\epsilon}) \text{ almost surely}
\end{equation}
for some $0<\epsilon<1-\rho$. Then, for some finite, scalar-valued random variable $W$, and $T$ as described in \eqref{thm:main_6:regime_1}, the same conclusion as shown in \eqref{thm:main_5:regime_2:conclusion} remains true.

\item \label{thm:main_6:regime_3} Suppose that either \eqref{thm:C_{n}_a.s.:g_C^{1}} and \eqref{contraction_criterion_g} are satisfied, and, almost surely, 
\begin{equation}\label{thm:main_6:regime_3.1:series_criteria}
\sum_{i=N}^{n}\min\{\omega(\nabla g;(\E[K_{i}^{-1}])^{1/2})(\E[K_{i}^{-1}])^{1/2},\omega(\nabla g;\E[K_{i}^{-1}]/\E[K_{i}^{-1/2}])\E[K_{i}^{-1/2}]\}=o(\sqrt{n}),
\end{equation}
or that \eqref{thm:C_{n}_a.s.:g_C^{2}} and \eqref{contraction_criterion_g} are satisfied, and, almost surely, 
\begin{equation}\label{thm:main_6:regime_3.2:series_criteria}
\sum_{i=N}^{n}\E[K_{i}^{-1}]=o(\sqrt{n}).
\end{equation}
Let $1/2<\rho\leqslant 1$. When $\kappa\in(-1,1/2)\setminus\{0\}$, the conclusion of \eqref{thm:main_5:regime_3.1:conclusion} remains true. When $\alpha^{*}=\beta^{*}=0$ (so that $\kappa=0$), the conclusion of \eqref{thm:main_5:regime_3.2:conclusion} remains true. Finally, when $\kappa=0$ but $\alpha^{*}\neq 0$ (so that $\beta^{*}\neq 0$ as well), the conclusion of \eqref{thm:main_5:regime_3.3:conclusion} remains true. 
\end{enumerate} 
\end{theorem}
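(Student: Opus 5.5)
The plan is to reduce Theorem~\ref{thm:main_6} to the Scenario~\eqref{Scenario_1} argument of Theorem~\ref{thm:main_5}, treating the discrepancy between $\hat{H}$ and the conditional drift as a vanishing bias term. Write $\theta_{n}=(n^{-1}A_{n},n^{-1}B_{n},n^{-1}C_{n})$. The recursion is
\begin{equation*}
\theta_{n+1}=\theta_{n}+\frac{1}{n+1}\left(\tilde{h}(\theta_{n})+M_{n+1}+r_{n}\right),
\end{equation*}
with the following ingredients. The three-dimensional drift $\tilde{h}$ has first two coordinates $\hat{h}$ and third coordinate $(1-q_{1})g(x,y)-z$. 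The martingale difference $M_{n+1}$ is the centred indicator vector of the new ball's colour. The bias $r_{n}$ equals $\E[\hat{H}\text{-type term}\mid\mathcal{F}_{n}]$ evaluated at the sample proportions, minus the same term evaluated at $(n^{-1}A_{n},n^{-1}B_{n})$, i.e.\ $r_{n}$ is a fixed linear image of $\E[g(\hat{A}_{n}/K_{n},\hat{B}_{n}/K_{n})\mid\mathcal{F}_{n}]-g(n^{-1}A_{n},n^{-1}B_{n})$. A Taylor expansion of $g$ around $(n^{-1}A_{n},n^{-1}B_{n})$ controls this. The first-order term vanishes by \eqref{exp:1_expectations} and \eqref{exp:2_expectations_variances}. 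The second-order term is bounded using \eqref{exp:1_variances}. Under \eqref{thm:C_{n}_a.s.:g_C^{2}} this gives $\|r_{n}\|\leqslant C\,\E[K_{n}^{-1}]$. Under \eqref{thm:C_{n}_a.s.:g_C^{1}}, the mean value theorem together with Cauchy--Schwarz or Jensen gives the $\min\{\omega(\nabla g;\cdot)\cdots\}$ bound appearing in \eqref{thm:main_6:regime_3.1:series_criteria}. These are exactly the estimates already used to prove Theorem~\ref{thm:main_2} and Theorem~\ref{thm:main_4}, so I take them as established.

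Next, by Theorem~\ref{thm:main_2_special} and Theorem~\ref{thm:a.s._convergence_C_{n}}, $\theta_{n}\to\theta^{*}=(x^{*},y^{*},z^{*})$ almost surely. Linearize $\tilde{h}$ at $\theta^{*}$. The Jacobian is
\begin{equation*}
D=\begin{bmatrix} q_{1}\alpha^{*}-1 & q_{1}\beta^{*} & 0\\ -q_{2}\alpha^{*} & -q_{2}\beta^{*}-1 & 0\\ (1-q_{1})\alpha^{*} & (1-q_{1})\beta^{*} & -1\end{bmatrix}.
\end{equation*}
Its eigenvalues are $-1,-1,\kappa-1$, so $\rho$ as defined is the spectral gap. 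The matrix $T$ (resp.\ $\overline{T}$ in the degenerate case $\kappa=0$, $\alpha^{*}\neq0$, where a Jordan block appears) is the change of basis bringing $D$ to Jordan form. The conditional covariance of $M_{n+1}$ converges almost surely to $\Gamma$, since the new ball has colours $1,2,3$ with probabilities $q_{1}g$, $q_{2}(1-g)$ and $(1-q_{1})g$, which tend to $x^{*},y^{*},z^{*}$. I then invoke the stochastic-approximation CLT with a perturbation term (the Duflo/Zhang-type result recorded in \S\ref{sec:lit_results_useful}, already used for Theorem~\ref{thm:main_5}), applied in the coordinates $T^{-1}(\theta_{n}-\theta^{*})$. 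In the three regimes it delivers, respectively: the $\sqrt{n/\log n}$ Gaussian limit when $\kappa=1/2$; almost sure convergence of $n^{\rho}$ times the slow coordinate to a random $W$ when $\kappa>1/2$; and the $\sqrt{n}$ Gaussian limit, with covariance given by the Lyapunov integral, when $\rho>1/2$. The entries $A_{i,j},B_{i,j},C_{i,j}$ are just the components of $T^{-1}\Gamma(T^{-1})^{t}$ integrated against the respective exponential weights. Since Theorem~\ref{thm:main_5} has already performed this algebra with identical symbols, nothing new is needed there.

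The main obstacle is showing that the accumulated bias is negligible at each scale. Iterating the linearized recursion, its contribution is of order
\begin{equation*}
n^{-1}\sum_{i\leqslant n} (i/n)^{\lambda}\,\|r_{i}\|
\end{equation*}
for the relevant exponent $\lambda$ of each Jordan coordinate. Multiplying by the normalization, I will use that $(i/n)^{\lambda}\leqslant 1$ for $\lambda\geqslant0$. This turns the requirements into $\sum_{i}\|r_{i}\|=o(\sqrt{n})$, $o(\sqrt{n/\log n})$ and $o(n^{1-\rho-\epsilon})$, which are precisely \eqref{thm:main_6:regime_3.2:series_criteria}, \eqref{thm:main_6:regime_3.1:series_criteria}, \eqref{thm:main_6:regime_1:series_criteria} and \eqref{thm:main_6:regime_2:series_criteria}. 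The alternative criterion in \eqref{thm:main_6:regime_1:series_criteria} comes from keeping the weight $i^{-1/2}$ on the slow coordinate, normalized by $\sqrt{n\log n}$. In regime \eqref{thm:main_6:regime_2} the extra $\epsilon$ guarantees that $\sum_{i} i^{\rho-1}\|r_{i}\|$ converges, so the bias is absorbed into the limit $W$. The second-order Taylor remainder, of size $O(\|\theta_{n}-\theta^{*}\|^{2})$, is handled as in Theorem~\ref{thm:main_5}. There one uses the a priori rate (e.g.\ $\|\theta_{n}-\theta^{*}\|=O(n^{-\min(\rho,1/2)}\log n)$ almost surely) that the same linearization yields once the bias is known to be small.
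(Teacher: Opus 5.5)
Your architecture is the paper's. You use the three-dimensional recursion with drift $\hat{G}$ and the centred colour indicator as martingale difference, control the bias $\delta_{n}$ through Lemmas~\ref{lem:Bernstein_approximation_errors_C^{1}} and \ref{lem:Bernstein_approximation_errors_C^{2}}, find Jacobian eigenvalues $-1,-1,\kappa-1$, and then apply Theorems~\ref{thm:zhang_2.1}, \ref{thm:zhang_2.2} and \ref{thm:zhang_2.3}.

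\textbf{A hypothesis you do not check.} You get $\theta_{n}\rightarrow\theta^{*}$ almost surely, which all three of Zhang's theorems presuppose, by citing Theorem~\ref{thm:main_2_special}. That theorem needs \eqref{thm:main_2:C^{1}} or \eqref{thm:main_2:C^{2}}, i.e.\ convergence of $\sum_{n}(n+1)^{-1}\E[K_{n}^{-1}]$ or of the corresponding $\omega$-series. This is not an assumption of Theorem~\ref{thm:main_6}, and \eqref{thm:C_{n}_a.s.:g_C^{2}} alone does not give it: $\E[K_{n}^{-1}]=1/\log n$ tends to $0$ but is not summable against $(n+1)^{-1}$. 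You must derive it from the partial-sum hypotheses by Abel summation. With $b_{n}=\sum_{i=N}^{n}a_{i}$, one has $\sum_{i=N}^{n}(i+1)^{-1}a_{i}=(n+1)^{-1}b_{n}+\sum_{i=N}^{n-1}(i+1)^{-1}(i+2)^{-1}b_{i}$, which converges whenever $b_{n}=O(n^{\gamma})$ for some $\gamma<1$. For the second criterion in \eqref{thm:main_6:regime_1:series_criteria}, apply the same identity to $a_{i}=(i+1)^{-1/2}\E[K_{i}^{-1}]$ with weights $(i+1)^{-1/2}$. This is precisely Lemma~\ref{lem:stronger_convergence_criteria_zhang}, which the paper invokes at the start of its proof.

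\textbf{Your final paragraph should be dropped, not repaired.} Theorems~\ref{thm:zhang_2.1}--\ref{thm:zhang_2.3} already absorb the accumulated bias. Their hypotheses \eqref{series_conv_zhang_2.1_1}/\eqref{series_conv_zhang_2.1_2}, \eqref{series_conv_zhang_2.2_2} and \eqref{series_conv_zhang_2.3} follow in one line from $\|\sum_{i}\delta_{i}\|_{2}\leqslant\sum_{i}\|\delta_{i}\|_{2}\leqslant C'\sum_{i}\E[K_{i}^{-1}]$, or from the $\omega$-expression, or from the $(i+1)^{-1/2}$-weighted sum; that is all the paper does.

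The heuristic you give in its place is wrong. Unrolling $u_{n+1}=(1-\lambda/(n+1))u_{n}+\tilde{r}_{n}/(n+1)$ puts weight $i^{-1}(i/n)^{\lambda}=n^{-1}(i/n)^{\lambda-1}$ on $\tilde{r}_{i}$, not $n^{-1}(i/n)^{\lambda}$. For the slow coordinate, $\lambda=\rho<1$, this weight is not bounded by $n^{-1}$. Reducing to $\sum_{i}\|r_{i}\|$ therefore needs Abel summation, and that step uses $\rho>1/2$ in \eqref{thm:main_6:regime_3}. In \eqref{thm:main_6:regime_1}, your own bound $n^{-1}\sum_{i}\|r_{i}\|$ multiplied by $\sqrt{n/\log n}$ would only demand $o(\sqrt{n\log n})$. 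So the threshold $o(\sqrt{n/\log n})$ you state does not follow from your formula; it is, however, exactly what Abel summation with the correct weight $n^{-1/2}i^{-1/2}$ requires. Your remark on \eqref{thm:main_6:regime_2} does use the correct weight $i^{\rho-1}$ and is fine.

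\textbf{Two smaller points.}
\begin{enumerate}
\item The $O(\|\theta_{n}-\theta^{*}\|^{2})$ remainder and a~priori rate are unavailable in the $\mathcal{C}^{(1)}$ branch of \eqref{thm:main_6:regime_3}. They are also unnecessary: Theorem~\ref{thm:zhang_2.3} needs only \eqref{gen_assump_dist_1}, and \eqref{gen_assump_dist_2} is needed only in \eqref{thm:main_6:regime_1} and \eqref{thm:main_6:regime_2}, where $g\in\mathcal{C}^{(2)}(\mathcal{S})$.
\item The colour probabilities are $q_{1}f_{n}$, $q_{2}(1-f_{n})$ and $(1-q_{1})f_{n}$, with $f_{n}=H_{n}$ or $E_{n}$ evaluated at $(n^{-1}A_{n},n^{-1}B_{n})$, not $g$. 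Convergence of the conditional covariance to $\Gamma$ therefore uses your uniform bias bound together with continuity of $g$, as in Lemma~\ref{lem:a.s.convergence}.
\end{enumerate}
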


In each of Theorem~\ref{thm:main_2}, Theorem~\ref{thm:main_4}, Theorem~\ref{thm:main_2_special}, the second part of Theorem~\ref{thm:a.s._convergence_C_{n}} and Theorem~\ref{thm:main_6}, we are in Scenario~\eqref{Scenario_2}, and we have imposed several criteria on the sequence $\{K_{n}\}$ (or, rather, on their inverse moments). In Proposition~\ref{prop:K_{n}_distributions}, we show that the criteria corresponding to the case where the drift function $g\in\mathcal{C}^{(2)}(\mathcal{S})$ are all satisfied when $K_{n}$ follows one of several commonly studied distributions:
\begin{prop}\label{prop:K_{n}_distributions}
Assume $g\in\mathcal{C}^{(2)}(\mathcal{S})$ and \eqref{contraction_criterion_g} holds. Then the conclusion drawn in each of Theorem~\ref{thm:main_2_special}, the second part of Theorem~\ref{thm:a.s._convergence_C_{n}}, and Theorem~\ref{thm:main_6} remains true for 
\begin{enumerate*}
\item $K_{n}$ that is distributed as discrete uniform over $\{1,2,\ldots,n\}$;
\item $K_{n}=\min\{n,G_{n}\}$, where $G_{n}$ follows Geometric$(p_{n})$ with probability of success $p_{n}=c n^{-\alpha}$ for $\alpha>1/2$;
\item $K_{n}=1+T_{n-1}$, where $T_{n-1}$ follows Binomial$(n-1,p_{n})$ with $p_{n}=c n^{-\alpha}$ for $0<\alpha<1/2$;
\item $K_{n}=\min\{n,1+P_{n}\}$, where $P_{n}$ follows Poisson$(\lambda_{n})$ with $\lambda_{n}=c n^{\alpha}$ for $\alpha>1/2$.
\end{enumerate*}
\end{prop}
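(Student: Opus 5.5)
Since $g\in\mathcal{C}^{(2)}(\mathcal{S})$ and \eqref{contraction_criterion_g} holds, the conclusions of Theorem~\ref{thm:main_2_special}, of the second part of Theorem~\ref{thm:a.s._convergence_C_{n}} and of Theorem~\ref{thm:main_6} follow once we check the purely deterministic conditions on the inverse moments $\E[K_{n}^{-1}]$. These are \eqref{thm:main_2:C^{2}_series_convergence}, \eqref{thm:C_{n}_a.s.:g_C^{2}}, \eqref{thm:main_6:regime_1:series_criteria}, \eqref{thm:main_6:regime_2:series_criteria} and \eqref{thm:main_6:regime_3.2:series_criteria}. The "almost surely" in those criteria is vacuous, since the sums are deterministic. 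Hence the plan is to show, for each of the four families, a bound of the form $\E[K_{n}^{-1}]=O(n^{-\gamma}\log n)$ with an exponent $\gamma>1/2$. Such a bound suffices for all criteria at once:
\begin{itemize}
\item $\sum_{n}(n+1)^{-1}\E[K_{n}^{-1}]<\infty$ and $\E[K_{n}^{-1}]\to0$;
\item $\sum_{i\le n}\E[K_{i}^{-1}]=O(n^{1-\gamma}\log n)$, which is $o(\sqrt{n})$ and also $o(\sqrt{n/\log n})$;
\item \eqref{thm:main_6:regime_2:series_criteria} holds provided $1-\gamma<1-\rho-\epsilon$ for some admissible $\epsilon$.
\end{itemize}

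The last point is the delicate one. Since $\rho<1/2$ in regime \eqref{thm:main_6:regime_2}, we need $\gamma>\rho+\epsilon$ for some $\epsilon>0$. This is guaranteed as soon as $\gamma>1/2$, because then $\epsilon=\gamma-\rho$ minus a tiny amount works, and it lies in $(0,1-\rho)$. Also note that all the $K_{n}$ here are supported in $[n]$, so the without-replacement versions are legitimate.

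The four cases are handled as follows.
\begin{enumerate}
\item \textbf{Discrete uniform on $[n]$.} Here $\E[K_{n}^{-1}]=n^{-1}\sum_{k\le n}k^{-1}=O(n^{-1}\log n)$, so $\gamma=1$ up to the logarithm.
\item \textbf{Binomial.} For $K_{n}=1+T_{n-1}$, use the standard identity $\E[(1+T)^{-1}]=(1-(1-p)^{n})/(np)\le (np_{n})^{-1}$. With $p_{n}=cn^{-\alpha}$ this gives $\E[K_{n}^{-1}]=O(n^{\alpha-1})$, and $1-\alpha>1/2$ exactly because $\alpha<1/2$.
\item \textbf{Poisson.} Similarly $\E[(1+P)^{-1}]=(1-e^{-\lambda})/\lambda\le\lambda^{-1}$. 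The truncation at $n$ only adds the term $n^{-1}\Prob[1+P_{n}>n]\le n^{-1}$. Hence $\E[K_{n}^{-1}]=O(n^{-\alpha}+n^{-1})$, so $\gamma=\min\{\alpha,1\}>1/2$.
\item \textbf{Geometric.} For $K_{n}=\min\{n,G_{n}\}$, compute
\[
\E[G^{-1}]=\frac{p}{1-p}\log(1/p)=O(n^{-\alpha}\log n).
\]
The truncation again contributes at most $n^{-1}$. This gives $\gamma=\min\{\alpha,1\}>1/2$, up to a logarithm, which is absorbed by the slack in $\gamma>1/2$.
\end{enumerate}

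The main obstacle is bookkeeping rather than depth. The geometric case carries an extra logarithm, and in \eqref{thm:main_6:regime_2:series_criteria} we must choose $\epsilon$ so that $n^{1-\gamma}\log n=o(n^{1-\rho-\epsilon})$. When $\rho$ is close to $1/2$ and $\gamma$ is close to $1/2$, this needs $\epsilon<\gamma-\rho$ strictly, which is available. I would also double-check the reading that regime \eqref{thm:main_6:regime_2} only requires existence of some such $\epsilon$. The hypothesis $\alpha>1/2$ for the geometric case, rather than $\alpha<1/2$, is consistent here: larger $\alpha$ means smaller $p_{n}$ and hence larger samples, which is what makes $\E[K_{n}^{-1}]$ small. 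Having verified the criteria, each cited theorem is invoked verbatim.
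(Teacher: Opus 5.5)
Your proof is correct. The skeleton is the paper's: bound $\E[K_{n}^{-1}]$ for each family, then check the deterministic conditions \eqref{thm:main_2:C^{2}_series_convergence}, \eqref{thm:C_{n}_a.s.:g_C^{2}} and the regime criteria of Theorem~\ref{thm:main_6}. The execution differs in two places.

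The paper verifies only the first criterion of \eqref{thm:main_6:regime_1:series_criteria} and deduces the others from it. It takes $\epsilon<1/2-\rho$ for \eqref{thm:main_6:regime_2:series_criteria} and uses Lemma~\ref{lem:stronger_convergence_criteria_zhang} for \eqref{thm:main_2:C^{2}_series_convergence}. You instead read everything off a single rate $O(n^{-\gamma}\log n)$ with $\gamma>1/2$. One small slip there: for $\gamma=1$ the partial sums are $O((\log n)^{2})$, not $O(\log n)$. This is harmless.

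More substantively, you use the exact identities $\E[(1+P)^{-1}]=(1-e^{-\lambda})/\lambda$ and $\E[G^{-1}]=\frac{p}{1-p}\log(1/p)$, together with $\min\{n,X\}^{-1}\leqslant X^{-1}+n^{-1}$. These replace the paper's integral comparisons and Stirling estimate, and they cover the entire stated range $\alpha>1/2$. By contrast, the paper's Poisson computation is done only for $1/2<\alpha<1$. Its geometric rate $\Theta(\log n/n^{\alpha})$ also cannot be right for $\alpha>1$, since $K_{n}\leqslant n$ forces $\E[K_{n}^{-1}]\geqslant n^{-1}$. The conclusion still holds in those cases, and your choice $\gamma=\min\{\alpha,1\}$ is what handles them correctly.
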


\section{Results from the literature that have been repeatedly used in the proofs of our main results}\label{sec:lit_results_useful}
\sloppy In order to prove each of Theorems~\ref{thm:main_1}, \ref{thm:main_2}, \ref{thm:main_3} and \ref{thm:main_4}, we have relied on a slightly generalized version of [Theorem 2, Chapter 2, \cite{borkar2008stochastic}] that we have stated below, followed by a brief discussion on how this generalization can be proved using arguments very similar to those used for proving the original version:
\begin{theorem}[Generalized Theorem 2, Chapter 2, \cite{borkar2008stochastic}]\label{thm:borkar_a.s.}
Consider a stochastic approximation process $\{Z_{n}:n\in\mathbb{N}_{0}\}$, given by
\begin{equation}
Z_{n+1}=Z_{n}+\gamma_{n}\left[G(Z_{n})+\Delta M_{n+1}+\delta_{n}\right],\label{sa_general}
\end{equation}
where each of $Z_{n}$, $\Delta M_{n+1}$ and $\delta_{n}$ is a random variable taking values in $\mathbb{R}^{d}$ for some $d\in\mathbb{N}$, and the following assumptions are satisfied:
\begin{enumerate}[label=(F\arabic*), ref=F\arabic*]
\item \label{gen_assump_a.s._1} the \emph{drift function} $G:\mathbb{R}^{d}\rightarrow\mathbb{R}^{d}$ is Lipschitz,
\item \label{gen_assump_a.s._2} the sequence $\{\gamma_{n}\}$ of \emph{step-sizes}, with each $\gamma_{n}$ a positive scalar, satisfies $\sum_{n}\gamma_{n}=\infty$ and $\sum_{n}\gamma_{n}^{2}<\infty$,
\item \label{gen_assump_a.s._3} the sequence $\{\Delta M_{n}\}$ is a martingale difference sequence with respect to the filtration $\{\mathcal{F}_{n}\}$, where $\mathcal{F}_{n}$ is the $\sigma$-field generated by the stochastic process up to and including epoch $n$, and the sequence $\{\delta_{n}\}$ is adapted to $\{\mathcal{F}_{n}\}$, and the series 
\begin{equation}
\sum_{n\in\mathbb{N}_{0}}\gamma_{n}\left(\Delta M_{n+1}+\delta_{n}\right) \text{ converges almost surely},\label{borkar_convergence_criterion_generalized}
\end{equation}
\item \label{gen_assump_a.s._4} the iterates of \eqref{sa_general} remain bounded almost surely, i.e.\ $\sup_{n}\left|\left|Z_{n}\right|\right|_{2}<\infty$ almost surely.
\end{enumerate}
Then, almost surely, the sequence $\{Z_{n}\}$ converges to a (possibly sample path dependent) compact connected internally chain transitive invariant set corresponding to the ODE: $\dot{x}(t)=G(x(t))$ for $t\geqslant 0$.
\end{theorem}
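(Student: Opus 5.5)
The plan is to rerun Borkar's proof of [Theorem 2, Chapter 2, \cite{borkar2008stochastic}] and observe that the martingale structure of the noise is used in exactly one place: to show that the partial sums $\zeta_{n}=\sum_{m=0}^{n-1}\gamma_{m}\Delta M_{m+1}$ converge almost surely. Assumption \eqref{gen_assump_a.s._3} supplies this convergence directly, for the combined noise $\gamma_{m}(\Delta M_{m+1}+\delta_{m})$. So the whole argument only needs to be re-read with $\Delta M_{m+1}$ replaced by $\Delta M_{m+1}+\delta_{m}$.

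\textbf{Step 1 (interpolation).} Set $t_{0}=0$ and $t_{n}=\sum_{m=0}^{n-1}\gamma_{m}$. By \eqref{gen_assump_a.s._2}, $t_{n}\uparrow\infty$. Let $\bar{Z}(t)$ be the piecewise linear interpolation with $\bar{Z}(t_{n})=Z_{n}$. For $s\geqslant 0$, let $Z^{s}(\cdot)$ be the solution of $\dot{x}=G(x)$ on $[s,\infty)$ with $Z^{s}(s)=\bar{Z}(s)$; it exists and is unique because $G$ is Lipschitz by \eqref{gen_assump_a.s._1}.

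\textbf{Step 2 (tracking lemma).} The goal is to show that, almost surely, for every $T>0$,
\begin{equation*}
\lim_{s\to\infty}\sup_{t\in[s,s+T]}\left|\left|\bar{Z}(t)-Z^{s}(t)\right|\right|_{2}=0 .
\end{equation*}
Fix $n$ with $t_{n}=s$ and write $Z_{n+k}=Z_{n}+\sum_{m=n}^{n+k-1}\gamma_{m}G(Z_{m})+(\zeta'_{n+k}-\zeta'_{n})$, where $\zeta'_{n}=\sum_{m<n}\gamma_{m}(\Delta M_{m+1}+\delta_{m})$. Compare this with the integral equation satisfied by $Z^{t_{n}}$. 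The discrepancy splits into three pieces.
\begin{enumerate*}
\item The discretization error $\int|G(\bar{Z}(u))-G(\bar{Z}(\lfloor u\rfloor))|\,du$, where $\lfloor u\rfloor$ denotes the grid point $t_{m}\leqslant u$. Since $G$ is Lipschitz and the iterates are bounded by \eqref{gen_assump_a.s._4}, this is $O(\sum_{m\geqslant n}\gamma_{m}^{2})$, which tends to $0$.
\item The noise term $\sup_{k:\,t_{n+k}\leqslant t_{n}+T}\|\zeta'_{n+k}-\zeta'_{n}\|_{2}$. This tends to $0$ because $\{\zeta'_{n}\}$ is almost surely Cauchy by \eqref{borkar_convergence_criterion_generalized}.
\item A term $L\int_{t_{n}}^{t}\|\bar{Z}(u)-Z^{t_{n}}(u)\|_{2}\,du$, where $L$ is the Lipschitz constant of $G$.
\end{enumerate*}
The discrete Gronwall inequality then gives a bound of the form $e^{LT}(\text{first}+\text{second})$ at the grid points. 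The error between grid points is $O(\sup_{m\geqslant n}\gamma_{m})$, again using boundedness of $G$ on the bounded set visited by the iterates. The step I regard as the crux is the noise term: in Borkar's proof this is where the $L^{2}$ martingale convergence theorem is invoked, and here I simply substitute the hypothesis \eqref{borkar_convergence_criterion_generalized}. No martingale property of $\delta_{n}$ is needed beyond adaptedness, which keeps $Z_{n}$ adapted.

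\textbf{Step 3 (limit set).} Step 2 says that $\bar{Z}$ is almost surely an asymptotic pseudotrajectory of the flow of $\dot{x}=G(x)$ in the sense of Bena\"im, and \eqref{gen_assump_a.s._4} makes it precompact. The argument of [Theorem 2, Chapter 2, \cite{borkar2008stochastic}], equivalently Bena\"im's theorem on limit sets of precompact asymptotic pseudotrajectories, applies verbatim. It shows that the $\omega$-limit set $\bigcap_{t\geqslant0}\overline{\{\bar{Z}(s):s\geqslant t\}}$ is compact and connected (as the limit set of a bounded continuous curve), invariant, and internally chain transitive, and that $Z_{n}$ converges to it. Since every ingredient of that argument is either a deterministic consequence of the Lipschitz property or has been reduced to Step 2, the generalized statement follows.
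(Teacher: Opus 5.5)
Your plan matches the paper's. The paper justifies this statement only by remarking that Borkar's proof goes through unchanged once the a.s.\ convergence of $\sum_n\gamma_n\Delta M_{n+1}$ used in [Lemma 1, Chapter 2, \cite{borkar2008stochastic}] is replaced by \eqref{borkar_convergence_criterion_generalized}; your Steps 1--3 spell out exactly that. Step 3 via Bena\"{\i}m's theorem is fine, since that theorem needs only the forward tracking you establish.

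\emph{The problem is in Step 2.} You put the discretization error on the iterate side, $\int\|G(\bar{Z}(u))-G(\bar{Z}(\lfloor u\rfloor))\|_{2}\,du$, and claim it is $O(\sum_{m\geqslant n}\gamma_{m}^{2})$ by \eqref{gen_assump_a.s._1} and \eqref{gen_assump_a.s._4}. For $u\in[t_{m},t_{m+1}]$, the relevant increment is $\|Z_{m+1}-Z_{m}\|_{2}=\gamma_{m}\|G(Z_{m})+\Delta M_{m+1}+\delta_{m}\|_{2}$, and no hypothesis bounds the noise $\Delta M_{m+1}+\delta_{m}$. On its own, \eqref{gen_assump_a.s._4} only gives $\|Z_{m+1}-Z_{m}\|_{2}\leqslant 2\sup_{k}\|Z_{k}\|_{2}$, which yields a bound $O(T)$ rather than $o(1)$. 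Your between-grid-points estimate has the same flaw if it refers to increments of the iterates.

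\emph{Two easy repairs.}
\begin{enumerate}
\item Follow Borkar and discretize the ODE solution instead. Bound $\int\|G(Z^{t_{n}}(u))-G(Z^{t_{n}}(\lfloor u\rfloor))\|_{2}\,du$ by $LC_{T}\sum_{m\geqslant n}\gamma_{m}^{2}$, and the between-grid error by $C_{T}\sup_{m\geqslant n}\gamma_{m}$. Here $C_{T}$ bounds $\|G\|_{2}$ along $Z^{t_{n}}$ on $[t_{n},t_{n}+T]$, which follows from Gronwall and \eqref{gen_assump_a.s._4}.
\item Keep your decomposition and use that \eqref{borkar_convergence_criterion_generalized} forces $\gamma_{m}(\Delta M_{m+1}+\delta_{m})\to 0$ a.s. The term is then at most $\frac{L}{2}\sum_{m\geqslant n}\gamma_{m}^{2}\|G(Z_{m})\|_{2}+\frac{L}{2}T\sup_{m\geqslant n}\|\gamma_{m}(\Delta M_{m+1}+\delta_{m})\|_{2}$, which tends to $0$.
\end{enumerate}
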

The stochastic approximation in \eqref{sa_general} incorporates into it an additional error term, $\delta_{n}$, that is absent in the statement of [Theorem 2, Chapter 2, \cite{borkar2008stochastic}]. However, the only place where the proof of Theorem~\ref{thm:borkar_a.s.} differs from the proof of [Theorem 2, Chapter 2, \cite{borkar2008stochastic}] is that, instead of the almost sure convergence of $\sum_{n\in\mathbb{N}_{0}}\gamma_{n}\Delta M_{n+1}$, we now must ensure that the series in \eqref{borkar_convergence_criterion_generalized} converges almost surely (this, in turn, is used for establishing [Lemma 1, Chapter 2, \cite{borkar2008stochastic}]).

\begin{remark}\label{rem:assumption_4_holds}
Recall that, apart from Theorem~\ref{thm:a.s._convergence_C_{n}}, all the main results stated in \S\ref{subsec:main_results_strong} are concerned with the almost sure convergence of $(n^{-1}A_{n},n^{-1}B_{n})$, and it is in the proof of each of these results that we implement Theorem~\ref{thm:borkar_a.s.}. The definitions of $A_{n}$ and $B_{n}$, as part of the urn process described in \S\ref{sec:model}, make it clear that $A_{n}+B_{n}\leqslant n$, so that $||(n^{-1}A_{n},n^{-1}B_{n})||_{2}\leqslant 1$ for each $n\geqslant N$. Therefore, Assumption~\eqref{gen_assump_a.s._4} is satisfied, and we no longer include its verification in the proofs that follow.
\end{remark}

\begin{remark}\label{rem:Lipschitz_extension}
In each of the proofs where Theorem~\ref{thm:borkar_a.s.} has been implemented, the drift function $G$, whether it equals $h$ defined in \eqref{h_H_defns} or $\hat{h}$ defined in \eqref{hat{H}_hat{h}_defn}, has been shown to be Lipschitz on the compact set $\mathcal{S}$ defined in \eqref{domain_defn}. This ensures, by Kirszbraun's Theorem (see \cite{kirszbraun1934zusammenziehende} and Theorem 1.2 of \cite{azagra2021kirszbraun}), the existence of a function $\overline{G}:\mathbb{R}^{2}\rightarrow\mathbb{R}^{2}$ such that $\overline{G}|_{\mathcal{S}}\equiv G$ and $\overline{G}$ is Lipschitz throughout $\mathbb{R}^{2}$ with the Lipschitz constant being the same as that for $G$ on $\mathcal{S}$. This argument, required for the verification of Assumption~\eqref{gen_assump_a.s._1}, has not been repeated in any of the proofs that follow, for the sake of brevity.
\end{remark}

In order to prove Theorems~\ref{thm:main_5} and \ref{thm:main_6}, we rely on [Theorems 2.1, 2.2 and 2.3 of \cite{zhang2016central}]. The premise for each of these results is, once again, the stochastic approximation process given by \eqref{sa_general}, with $\gamma_{n}=(n+1)^{-1}$ for each $n\in\mathbb{N}_{0}$. Some subset of the following three assumptions has been utilized in each of the three main results of \cite{zhang2016central}:
\begin{enumerate}[label=(G\arabic*), ref=G\arabic*]
\item \label{gen_assump_dist_1} Let $G(\theta^{*})=0$ for some $\theta^{*}\in\mathbb{R}^{d}$, such that $G$ is differentiable at $\theta^{*}$, and all eigenvalues of the Jacobian, $J_{G}(\theta^{*})$, of $G$ at $\theta^{*}$, have strictly negative real parts.
\item \label{gen_assump_dist_2} In addition to Assumption~\eqref{gen_assump_dist_1}, here we assume the existence of some $\eta>0$ such that
\begin{equation}
G(\theta)=J_{G}(\theta^{*})(\theta-\theta^{*})+o\left(\left|\left|\theta-\theta^{*}\right|\right|_{2}^{1+\eta}\right).\nonumber
\end{equation}
\item \label{gen_assump_dist_3} We assume the the martingale difference sequence $\{\Delta M_{n+1}\}$ satisfies the following Lindeberg condition: for each $\epsilon>0$,
\begin{equation}\label{Lindeberg_cond}
\frac{1}{n}\sum_{i=0}^{n-1}\E\left[\left|\left|\Delta M_{i+1}\right|\right|_{2}^{2}\chi\left\{\left|\left|\Delta M_{i+1}\right|\right|_{2}\geqslant \epsilon\sqrt{n}\right\}\Big|\mathcal{F}_{i}\right]\convas 0\text{ as }n\rightarrow\infty.
\end{equation}
Moreover, 
\begin{equation}
\frac{1}{n}\sum_{i=0}^{n-1}\E\left[\Delta M_{i+1}\left(\Delta M_{i+1}\right)^{t}\Big|\mathcal{F}_{i}\right]\convas \Gamma,\label{MDS_quadratic_variation_Cesaro_conv}
\end{equation}
where $\Gamma$ is a symmetric, positive-semidefinite, random matrix (of dimension $d\times d$). Here and going forward, for any matrix (or vector) $A$, the notation $A^{t}$ indicates its transpose.
\end{enumerate}
Under Assumption~\eqref{gen_assump_dist_1}, let the Jordan canonical form corresponding to $-J_{G}(\theta^{*})$ be given by $\text{diag}(J_{1},J_{2},\ldots,J_{s})$, where $J_{i}$ is a Jordan block of dimension $\nu_{i}\times\nu_{i}$, with each of its diagonal entries equal to $-\lambda_{i}$, where $\lambda_{1},\ldots,\lambda_{s}$ are the (not necessarily distinct) eigenvalues of $J_{G}(\theta^{*})$. Let $T$ be a non-singular matrix such that 
\begin{equation}
-T^{-1}J_{G}(\theta^{*})T=\text{diag}(J_{1},J_{2},\ldots,J_{s}).\label{similarity_eq_general}
\end{equation}
We define $\rho=\min\{\text{Re}(-\lambda_{i}):i\in[s]\}$, where $\text{Re}(\lambda)$, for $\lambda\in\mathbb{C}$, indicates the real part of $\lambda$. We let $\nu$ equal the maximum of $\nu_{t}$ for all those $t\in[s]$ such that $\text{Re}(-\lambda_{t})=\rho$. In what follows, we indicate by $0$ the $d$-dimensional column vector each of whose coordinates is equal to $0$, and by $I_{d}$ the $d\times d$ identity matrix.

\begin{theorem}[Theorem 2.1 of \cite{zhang2016central}]\label{thm:zhang_2.1}
Suppose $\{Z_{n}\}$ converges to the constant $\theta^{*}$ almost surely, and that Assumptions~\eqref{gen_assump_dist_2} and \eqref{gen_assump_dist_3} are satisfied. Suppose $\rho=1/2$, and that either
\begin{equation}
\sum_{i=0}^{n-1}\delta_{i}=o\left(\sqrt{n/\log n}\right) \text{ almost surely,}\label{series_conv_zhang_2.1_1}
\end{equation}
or that 
\begin{equation}
\sum_{i=0}^{n-1}\frac{\left|\left|\delta_{i}\right|\right|_{2}}{\sqrt{i+1}}=o\left(\sqrt{\log n}\right) \text{ almost surely}.\label{series_conv_zhang_2.1_2}
\end{equation}
Then
\begin{equation}\label{conclusion:zhang_2.1}
\frac{\sqrt{n}}{(\log n)^{\nu-1/2}}\left(Z_{n}-\theta^{*}\right)\convd N(0,\Sigma),
\end{equation}
where, letting $\Gamma$ be the matrix mentioned in Assumption~\eqref{gen_assump_dist_3}, we set 
\begin{equation}\label{sigma:zhang_2.1}
\Sigma=\lim_{n\rightarrow\infty}\frac{1}{(\log n)^{2\nu-1}}\int_{0}^{\log n}\exp\left\{\left(J_{G}(\theta^{*})+I_{d}/2\right)u\right\}\Gamma\left[\exp\left\{\left(J_{G}(\theta^{*})+I_{d}/2\right)u\right\}\right]^{t}du.
\end{equation}
\end{theorem}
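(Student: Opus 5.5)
Since $G(\theta^{*})=0$, I would first translate so that $\theta^{*}=0$ and write $J=J_{G}(\theta^{*})$. By Assumption~\eqref{gen_assump_dist_2}, $G(Z_{n})=JZ_{n}+r_{n}$ with $r_{n}=o(\|Z_{n}\|_{2}^{1+\eta})$. Then \eqref{sa_general} with $\gamma_{n}=(n+1)^{-1}$ becomes the linear recursion
\begin{equation*}
Z_{n+1}=\left(I_{d}+\frac{J}{n+1}\right)Z_{n}+\frac{1}{n+1}\left(\Delta M_{n+1}+\delta_{n}+r_{n}\right).
\end{equation*}
Set $\Pi_{k,n}=\prod_{j=k}^{n-1}(I_{d}+J/(j+1))$. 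Unrolling gives
\begin{equation*}
Z_{n}=\Pi_{0,n}Z_{0}+\sum_{k=0}^{n-1}\frac{\Pi_{k+1,n}}{k+1}\left(\Delta M_{k+1}+\delta_{k}+r_{k}\right).
\end{equation*}
The first tool is the standard estimate $\Pi_{k,n}\approx\exp\{J\log(n/k)\}$. Using the Jordan form \eqref{similarity_eq_general} and $\rho=1/2$, this gives
\begin{equation*}
\|\Pi_{k,n}\|\leqslant C\,(k/n)^{1/2}\,\bigl(1+\log(n/k)\bigr)^{\nu-1}.
\end{equation*}
With the normalisation $a_{n}=\sqrt{n}/(\log n)^{\nu-1/2}$, the $Z_{0}$ term is then $O((\log n)^{\nu-1}/\sqrt{n})$, so it vanishes after scaling.

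\textbf{Martingale term.} I would treat $a_{n}\sum_{k}(k+1)^{-1}\Pi_{k+1,n}\Delta M_{k+1}$ as a triangular martingale array and apply the martingale CLT.
\begin{itemize}
\item The conditional Lindeberg condition follows from \eqref{Lindeberg_cond}, because the weights $a_{n}\|\Pi_{k+1,n}\|/(k+1)$ are at most $C/\sqrt{k\log n}$ uniformly.
\item For the conditional covariance, I would replace $\E[\Delta M_{k+1}\Delta M_{k+1}^{t}|\mathcal{F}_{k}]$ by $\Gamma$. This replacement is justified by the Ces\`aro convergence \eqref{MDS_quadratic_variation_Cesaro_conv} and an Abel summation against the slowly varying weights.
\item After that replacement, the covariance is a Riemann sum. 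Substituting $u=\log(n/k)$ turns it into
\begin{equation*}
(\log n)^{-(2\nu-1)}\int_{0}^{\log n}e^{(J+I_{d}/2)u}\,\Gamma\,e^{(J+I_{d}/2)^{t}u}\,du,
\end{equation*}
which converges to $\Sigma$ in \eqref{sigma:zhang_2.1}.
\end{itemize}

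\textbf{Perturbation $\delta$.} The two alternative hypotheses are handled separately.
\begin{itemize}
\item Under \eqref{series_conv_zhang_2.1_2}, the bound $a_{n}\|\Pi_{k+1,n}\|/(k+1)\leqslant C/\sqrt{(k+1)\log n}$ gives
\begin{equation*}
a_{n}\Bigl\|\sum_{k}(k+1)^{-1}\Pi_{k+1,n}\delta_{k}\Bigr\|_{2}\leqslant C(\log n)^{-1/2}\sum_{k}\frac{\|\delta_{k}\|_{2}}{\sqrt{k+1}}\rightarrow 0.
\end{equation*}
\item Under \eqref{series_conv_zhang_2.1_1}, I would sum by parts with $D_{k}=\sum_{i<k}\delta_{i}$. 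The increments $(k+1)^{-1}\Pi_{k+1,n}-k^{-1}\Pi_{k,n}$ are of order $k^{-2}(k/n)^{1/2}(\log)^{\nu-1}$. Combined with $D_{k}=o(\sqrt{k/\log k})$, the resulting sum is $o$ of the normalisation.
\end{itemize}

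\textbf{Nonlinear remainder (main obstacle).} This step needs an a priori rate for $Z_{n}$ itself. I would first show, from the same representation, that $\|Z_{n}\|_{2}=O(n^{-1/2}(\log n)^{\nu})$ almost surely.
\begin{itemize}
\item For the martingale part I would use a law-of-the-iterated-logarithm-type bound, or a Kronecker-lemma argument applied to $\sum k^{-1/2-\epsilon'}\Delta M_{k}$.
\item For the $\delta$ part I would use the two conditions exactly as above.
\item For the $r_{k}$ part I would bootstrap. Almost sure convergence $Z_{n}\to 0$ makes $r_{k}=o(\|Z_{k}\|)$, which together with a Gronwall-type argument gives a polynomial rate. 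Iterating with $r_{k}=o(\|Z_{k}\|^{1+\eta})$ then improves this to $\|r_{k}\|=O(k^{-(1+\eta)/2}\mathrm{polylog})$.
\end{itemize}
With that rate in hand, the remainder contributes $a_{n}\sum_{k}k^{-1}(k/n)^{1/2}k^{-(1+\eta)/2}\mathrm{polylog}\to 0$. Slutsky's theorem then combines the martingale limit with the negligible $Z_{0}$, $\delta$ and $r$ terms to give \eqref{conclusion:zhang_2.1}.
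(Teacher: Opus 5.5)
The paper gives no proof of this statement. It is quoted from \cite{zhang2016central} and used as a black box; only its hypotheses are checked, in the proofs of Theorems~\ref{thm:main_5} and \ref{thm:main_6}. So there is no in-paper argument to compare against. Judged on its own, your outline is the standard argument, and it goes through. Its steps are: linearise, unroll with $\Pi_{k,n}$, apply the martingale CLT to the main term, bound the $\delta$ term directly or by Abel summation, and kill the nonlinear remainder using an a priori almost-sure rate. The key estimates are correct. The weight bound $a_n\|\Pi_{k+1,n}\|/(k+1)\leqslant C/\sqrt{k\log n}$ holds. The increment in your summation by parts is exactly $-(I_d+J)\Pi_{k+1,n}/(k(k+1))$, which gives the order you state. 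The substitution $u=\log(n/k)$ does produce $\Sigma$. Both alternative $\delta$-conditions are handled correctly.

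A few steps are stated more loosely than they can be proved.

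\textbf{Lindeberg.} The conditional Lindeberg condition for your array does not follow from the uniform weight bound alone. In \eqref{Lindeberg_cond} the truncation level is $\epsilon\sqrt{n}$ for every $i<n$. At index $k$ you need truncation at a level of order $\sqrt{k\log n}$, which is far below $\sqrt{n}$ when $k$ is small. The fix is a dyadic argument. On each block $[2^j,2^{j+1})$, that level exceeds $\sqrt{2^{j+1}}$ once $n$ is large. Applying \eqref{Lindeberg_cond} with $n$ replaced by $2^{j+1}$ gives an $o(1)$ contribution per block. Summing over the $O(\log n)$ blocks and dividing by $\log n$ gives the limit $0$.

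\textbf{Rate from Kronecker.} Kronecker's lemma applied to $\sum_k k^{-1/2-\epsilon'}\Delta M_k$ yields only $O(n^{-1/2+\epsilon'}(\log n)^{\nu-1})$ for the linear part, not $O(n^{-1/2}(\log n)^{\nu})$. This weaker rate is enough. Suppose the Gronwall step gives $\|Z_k\|_2=O(k^{-1/2+\epsilon''})$ with $(1+\eta)(1/2-\epsilon'')>1/2$. Then $\sum_k k^{-1/2}\|r_k\|_2<\infty$, so the scaled remainder is $O((\log n)^{-1/2})$. A single substitution of $r_k=o(\|Z_k\|_2^{1+\eta})$ suffices. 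The polylogarithmic rate you state is in fact true, but you do not need it.

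\textbf{Gronwall step.} Use $\|\Pi_{k,n}\|\leqslant C_{\epsilon_0}(k/n)^{1/2-\epsilon_0}$. Alternatively, use an adapted norm in which $\|I_d+J/(n+1)\|\leqslant 1-(1/2-\epsilon_0)/(n+1)$. Either way the factor $(\log n)^{\nu-1}$ no longer spoils the induction when $\nu>1$.

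\textbf{Random $\Gamma$.} Assumption~\eqref{gen_assump_dist_3} allows $\Gamma$ to be random. In that case invoke the nested-filtration (stable) form of the martingale CLT. Your array's $\sigma$-fields $\mathcal{F}_{k+1}$ do not depend on $n$, so this applies. The limit $N(0,\Sigma)$ is then to be read as a variance mixture.
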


\begin{theorem}[Theorem 2.2 of \cite{zhang2016central}]\label{thm:zhang_2.2}
Suppose $\{Z_{n}\}$ converges to the constant $\theta^{*}$ almost surely, and that Assumption~\eqref{gen_assump_dist_2} is satisfied. Suppose $0<\rho<1/2$, that
\begin{equation}
\sum_{i=0}^{n-1}\E\left[\Delta M_{i+1}\left(\Delta M_{i+1}\right)^{t}\Big|\mathcal{F}_{i}\right]=O(n) \text{ almost surely},\label{series_conv_zhang_2.2_1}
\end{equation}
and that there exists some $0<\epsilon<1-\rho$ such that
\begin{equation}
\sum_{i=0}^{n-1}\delta_{i}=o\left(n^{1-\rho-\epsilon}\right) \text{ almost surely}.\label{series_conv_zhang_2.2_2}
\end{equation}
Then there exist complex random variables $W_{1},\ldots,W_{s}$ such that 
\begin{equation}
\frac{n^{\rho}}{(\log n)^{\nu-1}}(Z_{n}-\theta^{*})-\sum_{j\in[s]:\text{Re}(-\lambda_{j})=\rho,\nu_{j}=\nu}\exp\left\{i\text{Im}(\lambda_{j})\log n\right\}W_{j}\left(T^{-1}\right)^{t}e_{j}\convas 0,\label{conclusion:zhang_2.2}
\end{equation}
where $i=\sqrt{-1}$, the notation $\text{Im}(\lambda)$ indicates the imaginary part of $\lambda$, the matrix $T$ is as defined via \eqref{similarity_eq_general}, and $e_{j}$ is the $d$-dimensional column vector such that, if its first $\nu_{1}$ coordinates make up its first block, the next $\nu_{2}$ coordinates make up its second block, and so on, then the $\nu_{j}$-th coordinate of its $j$-th block equals $1$, and all other coordinates equal $0$. 
\end{theorem}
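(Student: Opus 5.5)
The plan is to linearise the recursion around $\theta^{*}$, solve the linear part explicitly through the fundamental matrix $\prod_{k}(I_{d}+J_{G}(\theta^{*})/(k+1))\approx n^{J_{G}(\theta^{*})}$, and then read off the leading-order behaviour block by block in the Jordan basis given by $T$ in \eqref{similarity_eq_general}.

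\textbf{Linearisation and variation of constants.} Write $\theta_{n}=Z_{n}-\theta^{*}$ and $J=J_{G}(\theta^{*})$. By Assumption~\eqref{gen_assump_dist_2} and $\gamma_{n}=(n+1)^{-1}$, the recursion \eqref{sa_general} becomes
\[
\theta_{n+1}=\Bigl(I_{d}+\tfrac{J}{n+1}\Bigr)\theta_{n}+\tfrac{1}{n+1}\bigl(\Delta M_{n+1}+\delta_{n}+r_{n}\bigr),\qquad r_{n}=o\bigl(\|\theta_{n}\|_{2}^{1+\eta}\bigr).
\]
Set $\Pi_{m,n}=\prod_{k=m}^{n-1}(I_{d}+J/(k+1))$. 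Then
\[
\theta_{n}=\Pi_{0,n}\theta_{0}+\sum_{k<n}\Pi_{k+1,n}\,\frac{\Delta M_{k+1}+\delta_{k}+r_{k}}{k+1}.
\]
Pass to the Jordan coordinates via $T$ (this is where the factor $(T^{-1})^{t}$ in \eqref{conclusion:zhang_2.2} comes from). A standard computation with $\log(1+\lambda/(k+1))$ shows that on the block $J_{j}$ one has $\Pi_{k,n}\approx (n/k)^{-J_{j}}$. Consequently the entries of that block grow like $(n/k)^{\lambda_{j}}$ times polynomials in $\log(n/k)$ of degree at most $\nu_{j}-1$.

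\textbf{Critical blocks.} Fix a block with $\mathrm{Re}(-\lambda_{j})=\rho$ and $\nu_{j}=\nu$. Multiplying by $n^{\rho}(\log n)^{1-\nu}$, the top-order term is $e^{i\,\mathrm{Im}(\lambda_{j})\log n}$ times the series
\[
\sum_{k}k^{-\lambda_{j}-1}\,e_{j}^{t}T^{-1}\bigl(\Delta M_{k+1}+\delta_{k}+r_{k}\bigr).
\]
Lower powers of $\log$ contribute only $o(1)$. I will show that this series converges almost surely; its limit is then $W_{j}$.

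\begin{itemize}
\item The martingale part converges by the martingale convergence theorem. Its conditional variance is summable: $\sum_{k}k^{2\rho-2}\,\E[\|\Delta M_{k+1}\|_{2}^{2}\mid\mathcal{F}_{k}]<\infty$, which follows from \eqref{series_conv_zhang_2.2_1} by summation by parts, since $2\rho-2<-1$.
\item The $\delta$-part is handled by Abel summation using \eqref{series_conv_zhang_2.2_2}. The partial sums are $o(k^{1-\rho-\epsilon})$, so after summation by parts the terms are $O(k^{\rho-2}\cdot k^{1-\rho-\epsilon})=O(k^{-1-\epsilon})$, which is summable.
\end{itemize}

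\textbf{Non-critical blocks.} For the remaining blocks, either $\mathrm{Re}(-\lambda)>\rho$, or $\mathrm{Re}(-\lambda)=\rho$ with $\nu_{j}<\nu$. The same bounds show these coordinates are $o(n^{-\rho}(\log n)^{\nu-1})$.
\begin{itemize}
\item The martingale contribution is at most $O(n^{-1/2}\log n)$ or $O(n^{-\mathrm{Re}(-\lambda)})$ up to logarithms, by a Kronecker-type lemma. Both are $o(n^{-\rho})$ because $\rho<1/2$.
\item The $\delta$ contribution is $o(n^{-\rho-\epsilon'})$, again by Abel summation.
\end{itemize}

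\textbf{The remainder $r_{k}$: main obstacle.} The hardest step is the nonlinear remainder, because its size depends on $\theta_{k}$ itself; I will treat it by a bootstrap.
\begin{itemize}
\item First, ignore $r_{k}$ and use the linear estimates above, together with $\theta_{n}\to0$ almost surely. A Gronwall-type comparison (the perturbation $r_{k}=o(\|\theta_{k}\|_{2})$ only perturbs $J$ by $o(1)$) then gives $\|\theta_{n}\|_{2}=O(n^{-\rho+\varepsilon'})$ for every $\varepsilon'>0$.
\item Plugging this back gives $\|r_{k}\|_{2}=O(k^{-(\rho-\varepsilon')(1+\eta)})$. Hence $\sum_{k}k^{\rho-1}\|r_{k}\|_{2}(\log k)^{\nu}<\infty$ once $\varepsilon'$ is small relative to $\eta\rho$.
\item This summability lets the remainder be absorbed into the almost surely convergent series defining $W_{j}$, and into the $o(\cdot)$ terms for the non-critical blocks.
\end{itemize}

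Collecting the block contributions and transforming back through $T$ yields \eqref{conclusion:zhang_2.2}.
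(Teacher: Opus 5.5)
\textbf{Context and overall assessment.} The paper gives no proof of this statement; it imports it from \cite{zhang2016central}, so there is no internal argument to compare yours with. Most of your outline is the standard variation-of-constants argument, and it is sound:
\begin{itemize}
\item in a critical block the noise enters through the functional $e_j^{t}T^{-1}$;
\item the martingale series converges precisely because $\rho<1/2$ makes the $k^{2\rho-2}$-weighted conditional variances summable under \eqref{series_conv_zhang_2.2_1};
\item the $\delta$-series converges by Abel summation under \eqref{series_conv_zhang_2.2_2};
\item the bootstrap closes once $\varepsilon'(1+\eta)<\rho\eta$.
\end{itemize}
The step that fails is the last one, together with your remark that passing to Jordan coordinates ``is where the factor $(T^{-1})^{t}$ comes from''.

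\textbf{Why the last step fails.} The paper uses column vectors and the convention $-T^{-1}J_G(\theta^*)T=\mathrm{diag}(J_1,\ldots,J_s)$, so $\Pi_{k,n}=T(\cdots)T^{-1}$.
\begin{itemize}
\item The row $e_j^{t}T^{-1}$, as you wrote, is what reads off the noise.
\item The output direction, however, is a column of $T$. Transforming back gives $\sum_j e^{i\,\mathrm{Im}(\lambda_j)\log n}W_j\,T\bar e_j$. Here, for upper-triangular Jordan blocks, $\bar e_j$ has its $1$ in the \emph{first} coordinate of block $j$, so $T\bar e_j$ is a right eigenvector of $J_G(\theta^*)$.
\item The vector $(T^{-1})^{t}e_j$ is the transpose of your input functional, i.e.\ a left (generalized) eigenvector. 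It comes from transcribing Zhang's row-vector formula $e_jT^{-1}$ into column notation without also transposing the similarity relation.
\end{itemize}
The two vectors differ whenever $J_G(\theta^*)$ is not normal, and then the display \eqref{conclusion:zhang_2.2} is false.

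\textbf{Counterexample.} Take $d=2$, $G$ linear with $J_G(\theta^*)=\begin{bmatrix}-\rho & a\\ 0 & -1\end{bmatrix}$ and $a\neq 0$, $\delta_n=0$, and bounded i.i.d.\ noise with nonsingular covariance.
\begin{itemize}
\item Then $T=\begin{bmatrix}1 & -a\\ 0 & 1-\rho\end{bmatrix}$ and $(T^{-1})^{t}e_1=(1,a/(1-\rho))^{t}$.
\item The second coordinate of $Z_n-\theta^*$ is an average of bounded martingale increments, hence $O(n^{-1/2}\log n)=o(n^{-\rho})$.
\item The display would therefore force $W_1=0$, and hence $n^{\rho}(Z_{n,1}-\theta^*_1)\to 0$.
\item Your own (correct) computation shows instead that this limit is a series with strictly positive variance.
\end{itemize}
So what your argument actually proves, and what you should state, is the version with $T\bar e_j$ in place of $(T^{-1})^{t}e_j$. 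The same slip affects the paper's use of the result in \eqref{thm:main_5:regime_2}: there the slow mode lies along $T(0,0,1)^{t}=(-q_1,q_2,-1+q_1)^{t}$, not along $(T^{-1})^{t}(0,0,1)^{t}\propto(\alpha^*,\beta^*,0)^{t}$.

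\textbf{A smaller caution about the bootstrap.} Do not absorb $r_k$ into a random perturbation $J+o(1)$ of the propagator. The resulting products are no longer deterministic (nor $\mathcal{F}_k$-measurable), and the martingale estimates are lost. Instead, subtract the solution of the linear recursion driven by $\Delta M_{k+1}+\delta_k$, and run the Gronwall argument on the difference, which satisfies a noise-free recursion.
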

 
\begin{theorem}[Theorem 2.3 of \cite{zhang2016central}]\label{thm:zhang_2.3}
Suppose $\{Z_{n}\}$ converges to the constant $\theta^{*}$ almost surely, and that Assumptions~\eqref{gen_assump_dist_1} and \eqref{gen_assump_dist_3} are satisfied. Suppose $\rho>1/2$, and that
\begin{equation}\label{series_conv_zhang_2.3}
\sum_{i=0}^{n-1}\delta_{i}=o\left(\sqrt{n}\right) \text{ almost surely}.
\end{equation}
Then 
\begin{equation}\label{conclusion:zhang_2.3}
\sqrt{n}\left(Z_{n}-\theta^{*}\right)\convd N(0,\Sigma),
\end{equation}
where, with $\Gamma$ is the matrix mentioned in Assumption~\eqref{gen_assump_dist_3}, we set
\begin{equation}\label{sigma:zhang_2.3}
\Sigma=\int_{0}^{\infty}\exp\left\{\left(J_{G}(\theta^{*})+I_{d}/2\right)u\right\}\Gamma\left[\exp\left\{\left(J_{G}(\theta^{*})+I_{d}/2\right)u\right\}\right]^{t}du.
\end{equation}
\end{theorem}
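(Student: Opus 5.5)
The approach is the classical linearization and variation-of-constants argument for stochastic approximation with step size $\gamma_{n}=(n+1)^{-1}$. Write $J=J_{G}(\theta^{*})$ and $Y_{n}=Z_{n}-\theta^{*}$. Since $G(\theta^{*})=0$ and $G$ is differentiable at $\theta^{*}$, set $r_{n}=G(Z_{n})-JY_{n}$, so that $\|r_{n}\|_{2}=o(\|Y_{n}\|_{2})$ along paths where $Z_{n}\to\theta^{*}$. Rather than treating $r_{n}$ as an additive error, I would absorb it into the linear part. Let $E_{n}$ be a matrix with $r_{n}=E_{n}Y_{n}$ and $\|E_{n}\|\to 0$ almost surely; for instance, take $E_{n}=r_{n}Y_{n}^{t}/\|Y_{n}\|_{2}^{2}$ when $Y_{n}\neq 0$. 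Then
\begin{equation*}
Y_{n+1}=\Bigl(I_{d}+\tfrac{J+E_{n}}{n+1}\Bigr)Y_{n}+\tfrac{1}{n+1}\bigl(\Delta M_{n+1}+\delta_{n}\bigr).
\end{equation*}
Iterating gives the decomposition
\begin{equation*}
Y_{n}=\Pi_{m,n}Y_{m}+\sum_{k=m}^{n-1}\tfrac{1}{k+1}\Pi_{k+1,n}\Delta M_{k+1}+\sum_{k=m}^{n-1}\tfrac{1}{k+1}\Pi_{k+1,n}\delta_{k},
\end{equation*}
where $\Pi_{k,n}=\prod_{j=k}^{n-1}\bigl(I_{d}+(J+E_{j})/(j+1)\bigr)$.

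The first key step is a deterministic bound on these products. Fix $\rho'\in(1/2,\rho)$. Because all eigenvalues of $J$ have real parts at most $-\rho<-1/2$, there is a norm in which $\|I_{d}+(J+E)/(j+1)\|\leqslant 1-\rho'/(j+1)$ for $\|E\|$ small and $j$ large. Localizing to the event $\{\sup_{j\geqslant m}\|E_{j}\|<\eta\}$, whose probability tends to $1$ as $m\to\infty$, this yields $\|\Pi_{k,n}\|\leqslant C(k/n)^{\rho'}$. On the same event, comparing with $E\equiv 0$ via $\|E_{j}\|\to 0$ gives $\Pi_{k,n}\approx\exp\{J\log(n/k)\}$, uniformly for $k\geqslant\epsilon n$ with negligible error. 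With this bound, the initial term satisfies $\sqrt{n}\,\Pi_{m,n}Y_{m}=O(n^{1/2-\rho'})\to 0$.

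For the $\delta$-term, I would use Abel summation with the partial sums $D_{k}=\sum_{i<k}\delta_{i}=o(\sqrt{k})$. The boundary term is $D_{n}/n=o(n^{-1/2})$. The other terms involve differences $\Pi_{k+1,n}/(k+1)-\Pi_{k+2,n}/(k+2)$, each of norm $O\bigl(k^{-2}(k/n)^{\rho'}\bigr)$. Summing, $\sum_{k}o(\sqrt{k})\,k^{-2}(k/n)^{\rho'}=o(n^{-1/2})$ precisely because $\rho'>1/2$. Hence this term is $o(n^{-1/2})$, and \eqref{series_conv_zhang_2.3} is exactly what is needed here.

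The martingale term, multiplied by $\sqrt{n}$, is a triangular martingale array. I would apply the martingale CLT to it, first replacing $\Pi_{k+1,n}$ by $\exp\{J\log(n/k)\}$. This replacement costs little: the contribution from $k<\epsilon n$ is small in $L^{2}$ uniformly in $n$ as $\epsilon\to 0$, again by the $(k/n)^{\rho'}$ bound with $2\rho'>1$. The conditional Lindeberg condition follows from \eqref{Lindeberg_cond}, since the weights $\sqrt{n}\,(k+1)^{-1}\|\Pi\|$ are $O(n^{-1/2})$ uniformly for $k\geqslant\epsilon n$. For the conditional covariance, write it as
\begin{equation*}
\sum_{k}\tfrac{n}{(k+1)^{2}}\,e^{J\log(n/k)}\,\E\bigl[\Delta M_{k+1}\Delta M_{k+1}^{t}\mid\mathcal{F}_{k}\bigr]\,e^{J^{t}\log(n/k)}.
\end{equation*}
Abel-summing against the Cesàro limit \eqref{MDS_quadratic_variation_Cesaro_conv} and substituting $u=\log(n/k)$ turns this into a Riemann sum converging to $\int_{0}^{\infty}e^{(J+I_{d}/2)u}\,\Gamma\,e^{(J+I_{d}/2)^{t}u}\,du$, which is \eqref{sigma:zhang_2.3}. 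The integral converges because $J+I_{d}/2$ is stable. If $\Gamma$ is random, I would use the stable (mixing) version of the martingale CLT.

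I expect the main obstacle to be the nonlinear remainder, since only plain differentiability at $\theta^{*}$ is assumed (Assumption \eqref{gen_assump_dist_1}, not \eqref{gen_assump_dist_2}). Because the perturbation $E_{n}$ is random and path dependent, $\Pi_{k+1,n}$ is no longer $\mathcal{F}_{k}$-measurable, and the martingale structure of the middle sum breaks. My first attempt would be to keep $J$ in the products and treat $\sum_{k}\gamma_{k}\Pi^{J}_{k+1,n}E_{k}Y_{k}$ as a perturbation. I would first establish the a priori bound $\sqrt{n}\,\|Y_{n}\|=O_{P}(1)$ from the localized contraction estimate by a discrete Gronwall argument: the linear-part contraction at rate $\rho'$ dominates $\eta\|Y_{k}\|$, and the martingale and $\delta$ contributions are $O_{P}(n^{-1/2})$ by the computations above. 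Then $\sqrt{n}\sum_{k}k^{-1}(k/n)^{\rho'}\|E_{k}\|\,\|Y_{k}\|$ is $o_{P}(1)$, since $\|E_{k}\|\to 0$ and $\|Y_{k}\|=O_{P}(k^{-1/2})$, and the remainder becomes negligible.
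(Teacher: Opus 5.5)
The paper does not prove this statement. It is quoted from \cite{zhang2016central} and used as a black box, so there is no in-paper argument to compare with. Judged on its own, your architecture is the classical one, and the linear part is right. This covers the initial term and the Abel summation for the $\delta$-term, where $\rho'>1/2$ is exactly what makes $\sum_{k}o(\sqrt{k})\,k^{-2}(k/n)^{\rho'}=o(n^{-1/2})$. It also covers the Lindeberg check for $k\geqslant\epsilon n$ and the Riemann-sum identification of the limit covariance with \eqref{sigma:zhang_2.3}. Commit to the version you end with: discard the $E_{n}$-absorbed products and use $\Pi^{J}_{k,n}=\prod_{j=k}^{n-1}(I_{d}+J/(j+1))$ throughout. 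These products are deterministic, so $\|\Pi^{J}_{k,n}\|\leqslant C(k/n)^{\rho'}$ and $\Pi^{J}_{k+1,n}=e^{J\log(n/(k+1))}(I_{d}+O(k^{-1}))$ hold with no localization, and the middle sum is a genuine martingale array.

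The step that does not close as written is the nonlinear remainder.

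\textbf{Why the stated argument fails.} From ``$\|Y_{k}\|=O_{P}(k^{-1/2})$ and $\|E_{k}\|\to0$'' you cannot conclude $\sqrt{n}\sum_{k}k^{-1}(k/n)^{\rho'}\|E_{k}\|\,\|Y_{k}\|=o_{P}(1)$. A weighted sum of uniformly tight variables with bounded total weight need not be tight: take weights $1/n$ and i.i.d.\ absolute values of Cauchy variables. For the same reason, a discrete Gronwall fed with ``$O_{P}(n^{-1/2})$'' inputs does not close. A pathwise sup-Gronwall fails too: $\sqrt{k}\,W_{k}$ (the martingale term) behaves like a stationary process in $\log k$, and its supremum over $k$ is typically infinite, growing on the $\sqrt{\log\log k}$ scale.

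\textbf{What is needed instead.} You need moment bounds on adapted localizing events. Let $\tau_{\eta}=\inf\{k\geqslant m:\|r_{k}\|>\eta\|Y_{k}\|\}$. This is a stopping time with $\Prob[\tau_{\eta}<\infty]\to0$ as $m\to\infty$. Split $Y_{n}=L_{n}+R_{n}$, where $L_{n}$ is the linear part and $R_{n+1}=(I_{d}+J/(n+1))R_{n}+r_{n}/(n+1)$ with $R_{m}=0$. In your adapted norm, for $n<\tau_{\eta}$,
\[
|R_{n+1}|\leqslant\bigl(1-(\rho'-c\eta)/(n+1)\bigr)|R_{n}|+c\eta|L_{n}|/(n+1).
\]
Hence, on $\{\tau_{\eta}\geqslant n\}$,
\[
\sqrt{n}\,|R_{n}|\leqslant C\eta\sum_{k=m}^{n-1}k^{-1}(k/n)^{\rho'-c\eta-1/2}\sqrt{k}\,|L_{k}|, \qquad \rho'-c\eta>1/2.
\]
The three parts of $L_{k}$ are then handled as follows.
\begin{enumerate}
\item The initial part is controlled pathwise.
\item The $\delta$ part is also controlled pathwise: your Abel summation gives $\sqrt{k}\,\|\cdot\|\leqslant C\sup_{j}\|D_{j}\|/\sqrt{j}$.
\item For the martingale part you need $\sup_{k}\E[\sqrt{k}\,\|\tilde{W}_{k}\|]<\infty$. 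Here $\tilde{W}$ is built from the stopped increments $\Delta M_{i+1}\chi\{i<\sigma_{K}\}$, where $\sigma_{K}$ is the first $k$ at which $\sum_{i\leqslant k}\mathrm{tr}\,\E[\Delta M_{i+1}\Delta M_{i+1}^{t}|\mathcal{F}_{i}]$ exceeds $K(k+1)$. Then $\tilde{W}=W$ on $\{\sigma_{K}=\infty\}$, whose probability tends to $1$ as $K\to\infty$.
\end{enumerate}
This stopping is unavoidable, because Assumption~\eqref{gen_assump_dist_3} gives only almost sure Ces\`aro control and no unconditional moments. Finally, let $m\to\infty$ with $\eta$ and $K$ fixed, and afterwards let $\eta\to0$ and $K\to\infty$.

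\textbf{Same caveat elsewhere.} Your claim that the $k<\epsilon n$ part of the martingale array is small ``in $L^{2}$'' has the same problem. Either localize with $\sigma_{K}$, or combine Lenglart's inequality with the pathwise Abel-summed bound $O(\epsilon^{2\rho'-1})\sup_{k}k^{-1}\sum_{i\leqslant k}\mathrm{tr}\,\E[\Delta M_{i+1}\Delta M_{i+1}^{t}|\mathcal{F}_{i}]$ on its conditional variance.
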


\section{Proofs of our main results}\label{sec:proofs}
Before we begin in earnest the proof of each of the results stated in \S\ref{sec:main_results}, we mention, for the reader's convenience, that the central theme of each of these proofs is the representation of our stochastic process as a suitable stochastic approximation process. The specific stochastic approximation process we obtain depends on whether we are in Scenario~\eqref{Scenario_1} or \eqref{Scenario_2}, and whether the samples are drawn with or without replacement. Throughout the rest of the paper, we let $\mathcal{F}_{n}$ indicate the $\sigma$-field constituting all information regarding our stochastic process up to and including epoch $n$. 

\begin{proof}[Proof of Theorem~\ref{thm:main_1}]
When we are in \eqref{Scenario_1}, and the samples are drawn with replacement, we obtain the simplest of the stochastic approximation processes, given by \eqref{sa_1} for all $n\geqslant N$:
\begin{align}
\begin{bmatrix}
(n+1)^{-1}A_{n+1}\\
(n+1)^{-1}B_{n+1}
\end{bmatrix}={}&\begin{bmatrix}
n^{-1}A_{n}\\
n^{-1}B_{n}
\end{bmatrix}+\frac{1}{n+1}\left\{\begin{bmatrix}
A_{n+1}-A_{n}\\
B_{n+1}-B_{n}
\end{bmatrix}-\begin{bmatrix}
n^{-1}A_{n}\\
n^{-1}B_{n}
\end{bmatrix}\right\}\nonumber\\
={}&\begin{bmatrix}
n^{-1}A_{n}\\
n^{-1}B_{n}
\end{bmatrix}+\frac{1}{n+1}\left\{\begin{bmatrix}
\Delta M_{n+1,1}\\
\Delta M_{n+1,2}
\end{bmatrix}+h\left(\begin{bmatrix}
n^{-1}A_{n}\\
n^{-1}B_{n}
\end{bmatrix}\right)\right\},\label{sa_1}
\end{align}
where the step-size $\gamma_{n}=(n+1)^{-1}$ for each $n\geqslant N$, validating, right away, Assumption~\eqref{gen_assump_a.s._2}. We define
\begin{equation}\label{martingale_difference_defn_scenario_1_with_replacement}
\begin{cases}
{}&\Delta M_{n+1,1}=A_{n+1}-A_{n}-q_{1}H_{0}\left(n^{-1}A_{n},n^{-1}B_{n}\right),\\
{}&\Delta M_{n+1,2}=B_{n+1}-B_{n}-q_{2}\left\{1-H_{0}\left(n^{-1}A_{n},n^{-1}B_{n}\right)\right\},
\end{cases}
\end{equation}
where the functions $H_{0}$ and $h$ are as defined in \eqref{H_{0}_defn} and \eqref{h_H_defns} respectively. Simple computations utilizing \eqref{A_{n+1}_distribution} and \eqref{B_{n+1}_distribution} reveal that $\{\Delta M_{n+1}:n\geqslant N\}=\{(\Delta M_{n+1,1},\Delta M_{n+1,2})^{t}:n\geqslant N\}$ forms a martingale difference sequence with respect to the filtration $\{\mathcal{F}_{n+1}:n\geqslant N\}$. Furthermore, this observation, along with the fact that each of $(A_{n+1}-A_{n})$ and $(B_{n+1}-B_{n})$ is a Bernoulli random variable, yields:
\begin{align}
\sum_{i=N}^{n}\E\left[\gamma_{i}^{2}\left|\left|\Delta M_{i+1}\right|\right|_{2}^{2}\big|\mathcal{F}_{i}\right]={}&q_{1}\sum_{i=N}^{n}\frac{1}{(i+1)^{2}}H_{0}\left(\frac{A_{i}}{i},\frac{B_{i}}{i}\right)\left\{1-q_{1}H_{0}\left(\frac{A_{i}}{i},\frac{B_{i}}{i}\right)\right\}\nonumber\\&+q_{2}\sum_{i=N}^{n}\frac{1}{(i+1)^{2}}\left\{1-H_{0}\left(\frac{A_{i}}{i},\frac{B_{i}}{i}\right)\right\}\left[1-q_{2}\left\{1-H_{0}\left(\frac{A_{i}}{i},\frac{B_{i}}{i}\right)\right\}\right],\label{quadratic_variation_with_replacement_scenario_1}
\end{align}
which, by \eqref{H_{0}_bounds}, evidently converges as $n\rightarrow\infty$. This, in turn, implies, via [Theorem 4.5.2.\ of \cite{durrett2019probability}], that the martingale $\{\sum_{i=N}^{n}\gamma_{i}\Delta M_{i+1}:n\geqslant N\}$ converges almost surely. This completes the verification of Assumption~\eqref{gen_assump_a.s._3}. Since $H_{0}$ is a polynomial, $H_{0}\in\mathcal{C}^{(1)}(\mathcal{S})$. This, along with the fact that $\mathcal{S}$ is compact, ensures that $h$ is Lipschitz on $\mathcal{S}$. This, along with Remark~\ref{rem:Lipschitz_extension}, completes the verification of Assumption~\eqref{gen_assump_a.s._1}. The assertion made in the statement of Theorem~\ref{thm:main_1} now follows from an application of Theorem~\ref{thm:borkar_a.s.}.
\end{proof}

\begin{proof}[Proof of Theorem~\ref{thm:main_2}]
Here, we are in Scenario~\eqref{Scenario_2}, but similar to the set-up in Theorem~\ref{thm:main_1}, the samples are drawn with replacement. The corresponding stochastic approximation process, for $n\geqslant N$, is given by:
\begin{align}
\begin{bmatrix}
(n+1)^{-1}A_{n+1}\\
(n+1)^{-1}B_{n+1}
\end{bmatrix}={}&\begin{bmatrix}
n^{-1}A_{n}\\
n^{-1}B_{n}
\end{bmatrix}+\frac{1}{n+1}\left\{\begin{bmatrix}
\Delta M_{n+1,1}\\
\Delta M_{n+1,2}
\end{bmatrix}+\begin{bmatrix}
\delta_{n,1}\\
\delta_{n,2}
\end{bmatrix}+\hat{h}\left(\begin{bmatrix}
n^{-1}A_{n}\\
n^{-1}B_{n}
\end{bmatrix}\right)\right\},\label{sa_2}
\end{align}
where the step-sizes $\gamma_{n}=(n+1)^{-1}$, for $n\geqslant N$, ensure that Assumption~\eqref{gen_assump_a.s._2} is true. The function $\hat{h}$, defined in \eqref{hat{H}_hat{h}_defn}, is Lipschitz on the compact set $\mathcal{S}$ since so is $g$ defined in \eqref{g_defn} (the first criterion stated in each of \eqref{thm:main_2:Lipschitz}, \eqref{thm:main_2:C^{1}} and \eqref{thm:main_2:C^{2}} ensures this). This, along with Remark~\ref{rem:Lipschitz_extension}, validates Assumption~\eqref{gen_assump_a.s._1}. We set
\begin{equation}\label{many_defns_with_replacement_scenario_2}
\begin{cases}
{}&\Delta M_{n+1,1}=A_{n+1}-A_{n}-q_{1}H_{n}\left(n^{-1}A_{n},n^{-1}B_{n}\right),\\
{}&\Delta M_{n+1,2}=B_{n+1}-B_{n}-q_{2}\left\{1-H_{n}\left(n^{-1}A_{n},n^{-1}B_{n}\right)\right\},\\
{}&\delta_{n,1}=q_{1}\left\{H_{n}\left(n^{-1}A_{n},n^{-1}B_{n}\right)-g\left(n^{-1}A_{n},n^{-1}B_{n}\right)\right\},\\
{}&\delta_{n,2}=q_{2}\left\{g\left(n^{-1}A_{n},n^{-1}B_{n}\right)-H_{n}\left(n^{-1}A_{n},n^{-1}B_{n}\right)\right\},
\end{cases}
\end{equation}
where the sequence $\{H_{n}:n\in\mathbb{N},n\geqslant N\}$ is defined as follows (recall $I_{n}$ and $\mu_{n}(\cdot)$ from  Scenario~\eqref{Scenario_2}):
\begin{equation}
H_{n}(x,y)=\sum_{k\in I_{n}}\sum_{i=0}^{k}\sum_{j=0}^{k-i}g\left(\frac{i}{k},\frac{j}{k}\right){k\choose i}{k-i\choose j}x^{i}y^{j}(1-x-y)^{k-i-j}\mu_{n}(k) \quad \text{for all } (x,y)\in\mathcal{S}.\label{H_{n}_defn}
\end{equation}
If we consider the random experiment in \eqref{Exp_1}, then $H_{n}(x,y)=\E[g(V_{1}/K_{n},V_{2}/K_{n})]$, and since the function $g$ takes values in $[0,1]$, as mentioned right before \eqref{g_defn}, we have 
\begin{equation}\label{H_{n}_bounds}
0\leqslant H_{n}(x,y)\leqslant 1 \text{ for each }(x,y)\in\mathcal{S}, \text{ for each }n\geqslant N.
\end{equation}
Simple computations utilizing \eqref{A_{n+1}_distribution} and \eqref{B_{n+1}_distribution} reveal that $\{\Delta M_{n+1}:n\geqslant N\}=\{(\Delta M_{n+1,1},\Delta M_{n+1,2})^{t}:n\geqslant N\}$, defined via \eqref{many_defns_with_replacement_scenario_2}, forms a martingale difference sequence with respect to the filtration $\{\mathcal{F}_{n+1}:n\geqslant N\}$. Moreover, an identity analogous to \eqref{quadratic_variation_with_replacement_scenario_1} is true, with $H_{0}(i^{-1}A_{i},i^{-1}B_{i})$ replaced by $H_{i}(i^{-1}A_{i},i^{-1}B_{i})$ for each $i\in\{N,\ldots,n\}$, so that by \eqref{H_{n}_bounds}, we conclude that the series $\sum_{n\geqslant N}\E[\gamma_{n}^{2}||\Delta M_{n+1}||_{2}^{2}\big|\mathcal{F}_{n}]$ converges. Once again, [Theorem 4.5.2.\ of \cite{durrett2019probability}] ensures that the martingale $\{\sum_{i=N}^{n}\gamma_{i}\Delta M_{i+1}:n\geqslant N\}$ converges almost surely.

The final conclusion drawn in the previous paragraph ensures one part of Assumption~\eqref{gen_assump_a.s._3} -- to ensure the rest, we must show that $\sum_{n}\gamma_{n}\delta_{n}$ converges, where $\delta_{n}=(\delta_{n,1},\delta_{n,2})^{t}$ is defined using \eqref{many_defns_with_replacement_scenario_2}. 
\begin{enumerate}
\item When $g$ is Lipschitz, it is H\"{o}lder continuous with exponent $\alpha=1$ (see \eqref{Holder_cont_cond}), so that by \eqref{Bernstein_error_Lipschitz_H_{n}} of Lemma~\ref{lem:Bernstein_approximation_errors_Holder_continuous}, we obtain:
\begin{equation}
\left|\left|\sum_{i=N}^{n}\gamma_{i}\delta_{i}\right|\right|_{2}\leqslant\sum_{i=1}^{n}\gamma_{i}\left|\left|\delta_{i}\right|\right|_{2}\leqslant 2^{-1/2}L\left(q_{1}^{2}+q_{2}^{2}\right)^{1/2}\sum_{i=1}^{n}(i+1)^{-1}\E\left[K_{i}^{-1/2}\right],\nonumber
\end{equation}
which converges because of \eqref{thm:main_2:Lipschitz_series_convergence}.
\item When $g\in\mathcal{C}^{(1)}(\mathcal{S})$, by \eqref{Bernstein_error_C^{1}_H_{n}} of Lemma~\ref{lem:Bernstein_approximation_errors_C^{1}}, we obtain:
\begin{align}
\left|\left|\sum_{i=N}^{n}\gamma_{i}\delta_{i}\right|\right|_{2}\leqslant{}&C\left(q_{1}^{2}+q_{2}^{2}\right)^{1/2}\sum_{i=1}^{n}(i+1)^{-1}\min\Big\{\omega\Big(\nabla g;\sqrt{\E\left[K_{i}^{-1}\right]}\Big)\sqrt{\E\left[K_{i}^{-1}\right]},\nonumber\\&\omega\Big(\nabla g;\E\left[K_{i}^{-1}\right]\left(\E\left[K_{i}^{-1/2}\right]\right)^{-1}\Big)\E\left[K_{i}^{-1/2}\right]\Big\},\nonumber
\end{align}
and this converges as $n\rightarrow\infty$ when \eqref{thm:main_2:C^{1}_series_convergence} is true.
\item When $g\in\mathcal{C}^{(2)}(\mathcal{S})$, by \eqref{Bernstein_error_C^{2}_H_{n}} of Lemma~\ref{lem:Bernstein_approximation_errors_C^{2}}, we obtain:
\begin{align}
\left|\left|\sum_{i=N}^{n}\gamma_{i}\delta_{i}\right|\right|_{2}\leqslant{}&C\left(q_{1}^{2}+q_{2}^{2}\right)^{1/2}\sum_{i=1}^{n}(i+1)^{-1}\E\left[K_{i}^{-1}\right],\nonumber
\end{align}
and this converges as $n\rightarrow\infty$ when \eqref{thm:main_2:C^{2}_series_convergence} is true.
\end{enumerate}
This completes the verification of Assumption~\eqref{gen_assump_a.s._3}. Theorem~\eqref{thm:main_2} now follows from an application of Theorem~\ref{thm:borkar_a.s.}.
\end{proof}

\begin{proof}[Proof of Theorem~\ref{thm:main_3}]
When we are in Scenario~\eqref{Scenario_1} and the samples are drawn without replacement, the stochastic approximation process, for $n\geqslant N$, is given by
\begin{align}
\begin{bmatrix}
(n+1)^{-1}A_{n+1}\\
(n+1)^{-1}B_{n+1}
\end{bmatrix}={}&\begin{bmatrix}
n^{-1}A_{n}\\
n^{-1}B_{n}
\end{bmatrix}+\frac{1}{n+1}\left\{\begin{bmatrix}
\Delta M_{n+1,1}\\
\Delta M_{n+1,2}
\end{bmatrix}+\begin{bmatrix}
\delta_{n,1}\\
\delta_{n,2}
\end{bmatrix}+h\left(\begin{bmatrix}
n^{-1}A_{n}\\
n^{-1}B_{n}
\end{bmatrix}\right)\right\},\label{sa_3}
\end{align}
with the drift function $h$ as defined in \eqref{h_H_defns}, and the step-sizes $\gamma_{n}=(n+1)^{-1}$ satisfying \eqref{gen_assump_a.s._2}. That Assumption~\eqref{gen_assump_a.s._1} is true is justified as in the proof of Theorem~\ref{thm:main_1}. We set 
\begin{equation}\label{many_defns_without_replacement_identical_law}
\begin{cases}
{}&\Delta M_{n+1,1}=A_{n+1}-A_{n}-q_{1}F_{n}\left(n^{-1}A_{n},n^{-1}B_{n}\right),\\
{}&\Delta M_{n+1,2}=B_{n+1}-B_{n}-q_{2}\left\{1-F_{n}\left(n^{-1}A_{n},n^{-1}B_{n}\right)\right\},\\
{}&\delta_{n,1}=q_{1}\left\{F_{n}\left(n^{-1}A_{n},n^{-1}B_{n}\right)-H_{0}\left(n^{-1}A_{n},n^{-1}B_{n}\right)\right\},\\
{}&\delta_{n,2}=q_{2}\left\{H_{0}\left(n^{-1}A_{n},n^{-1}B_{n}\right)-F_{n}\left(n^{-1}A_{n},n^{-1}B_{n}\right)\right\},
\end{cases}
\end{equation}
where we define $F_{n}:\mathcal{S}_{n}\rightarrow[0,1]$ for each $n\geqslant N$, with $\mathcal{S}_{n}$ as defined in \eqref{S_{n}_defn}, as follows:
\begin{equation}\label{F_{n}_defn}
F_{n}\left(\frac{r_{1}}{n},\frac{r_{2}}{n}\right)=\sum_{k=1}^{M}\sum_{i=0}^{k}\sum_{j=0}^{k-i}g\left(\frac{i}{k},\frac{j}{k}\right){r_{1} \choose i}{r_{2} \choose j}{n-r_{1}-r_{2}\choose k-i-j}\left\{{n\choose k}\right\}^{-1}\mu(k) \text{ for } \left(\frac{r_{1}}{n},\frac{r_{2}}{n}\right)\in\mathcal{S}_{n}.
\end{equation}
For each $(r_{1}/n,r_{2}/n)\in \mathcal{S}_{n}$, we have $F_{n}(r_{1}/n,r_{2}/n)=\E[g(V_{1}/K_{n},V_{2}/K_{n})]$ for $V_{1},V_{2}$ as in Experiment~\eqref{Exp_2}, $g$ as in \eqref{g_defn} and $K_{n}$ as in Scenario~\eqref{Scenario_1}. Since $0\leqslant g(x,y)\leqslant 1$ for $(x,y)\in\mathcal{S}$, this ensures
\begin{equation}\label{F_{n}_bounds}
0\leqslant F_{n}(r_{1}/n,r_{2}/n)\leqslant 1 \text{ for each }(r_{1}/n,r_{2}/n)\in \mathcal{S}_{n}.
\end{equation}
Simple computations using \eqref{A_{n+1}_distribution}, \eqref{B_{n+1}_distribution} and the fact that the samples are drawn without replacement reveal that $\{\Delta M_{n+1}:n\geqslant N\}$, where $\Delta M_{n+1}=(\Delta M_{n+1,1},\Delta M_{n+1,2})^{t}$ is defined using \eqref{many_defns_without_replacement_identical_law}, forms a martingale difference sequence with respect to $\{\mathcal{F}_{n+1}:n\geqslant N\}$. An identity analogous to \eqref{quadratic_variation_with_replacement_scenario_1}, with $H_{0}(i^{-1}A_{i},i^{-1}B_{i})$ replaced by $F_{i}(i^{-1}A_{i},i^{-1}B_{i})$ for $i\in\{N,\ldots,n\}$, is true in this set-up, which, along with \eqref{F_{n}_bounds}, allows us to conclude, using [Theorem 4.5.2.\ of \cite{durrett2019probability}], that the martingale $\{\sum_{i=N}^{n}\gamma_{i}\Delta M_{i+1}:n\geqslant N\}$ converges almost surely. Setting $\delta_{n}=(\delta_{n,1},\delta_{n,2})^{t}$, defined using \eqref{many_defns_without_replacement_identical_law}, and noting that $(i^{-1}A_{i},i^{-1}B_{i})\in\mathcal{S}_{i}$ for each $i\in\mathbb{N}$, from Lemma~\ref{lem:hypergeometric_approx_sample_size_iid}, we get $||\sum_{i=N}^{n}\gamma_{i}\delta_{i}||_{2}\leqslant \sum_{i=N}^{n}(i+1)^{-1}||\delta_{i}||_{2}=O(\sum_{i=N}^{n}i^{-2})$, which converges as $n\rightarrow\infty$. This completes the verification of Assumption~\eqref{gen_assump_a.s._3}, and the conclusion follows from Theorem~\ref{thm:borkar_a.s.}.
\end{proof}

\begin{proof}[Proof of Theorem~\ref{thm:main_4}]
When we are in Scenario~\eqref{Scenario_2} and the samples are drawn without replacement, the stochastic approximation process, for $n\geqslant N$, is given by
\begin{align}
\begin{bmatrix}
(n+1)^{-1}A_{n+1}\\
(n+1)^{-1}B_{n+1}
\end{bmatrix}={}&\begin{bmatrix}
n^{-1}A_{n}\\
n^{-1}B_{n}
\end{bmatrix}+\frac{1}{n+1}\left\{\begin{bmatrix}
\Delta M_{n+1,1}\\
\Delta M_{n+1,2}
\end{bmatrix}+\begin{bmatrix}
\delta_{n,1}\\
\delta_{n,2}
\end{bmatrix}+\hat{h}\left(\begin{bmatrix}
n^{-1}A_{n}\\
n^{-1}B_{n}
\end{bmatrix}\right)\right\},\label{sa_4}
\end{align}
with the drift function $\hat{h}$ as defined in \eqref{hat{H}_hat{h}_defn}, and step-sizes $\gamma_{n}=(n+1)^{-1}$ satisfying Assumption~\eqref{gen_assump_a.s._2}. That Assumption~\eqref{gen_assump_a.s._1} holds is justified as in the proof of Theorem~\ref{thm:main_2}. We set
\begin{equation}\label{many_defns_without_replacement_varying_law}
\begin{cases}
{}&\Delta M_{n+1,1}=A_{n+1}-A_{n}-q_{1}E_{n}\left(n^{-1}A_{n},n^{-1}B_{n}\right),\\
{}&\Delta M_{n+1,2}=B_{n+1}-B_{n}-q_{2}\left\{1-E_{n}\left(n^{-1}A_{n},n^{-1}B_{n}\right)\right\},\\
{}&\delta_{n,1}=q_{1}\left\{E_{n}\left(n^{-1}A_{n},n^{-1}B_{n}\right)-g\left(n^{-1}A_{n},n^{-1}B_{n}\right)\right\},\\
{}&\delta_{n,2}=q_{2}\left\{g\left(n^{-1}A_{n},n^{-1}B_{n}\right)-E_{n}\left(n^{-1}A_{n},n^{-1}B_{n}\right)\right\},
\end{cases}
\end{equation}
where the sequence of functions $\{E_{n}:n\geqslant N\}$ is defined as
\begin{equation}\label{E_{n}_defn}
E_{n}\left(\frac{r_{1}}{n},\frac{r_{2}}{n}\right)=\sum_{k\in I_{n}}\sum_{i=0}^{k}\sum_{j=0}^{k-i}g\left(\frac{i}{k},\frac{j}{k}\right){r_{1} \choose i}{r_{2} \choose j}{n-r_{1}-r_{2}\choose k-i-j}\left\{{n\choose k}\right\}^{-1}\mu_{n}(k), \text{ for } \left(\frac{r_{1}}{n},\frac{r_{2}}{n}\right)\in \mathcal{S}_{n},
\end{equation}
with $\mathcal{S}_{n}$ as defined in \eqref{S_{n}_defn}. Note that $E_{n}(r_{1}/n,r_{2}/n)=\E[g(V_{1}/K_{n},V_{2}/K{n})]$ for $V_{1},V_{2}$ as in Experiment~\eqref{Exp_2}, $g$ as in \eqref{g_defn} and $K_{n}$ as in Scenario~\eqref{Scenario_2}. Since $0\leqslant g(x,y)\leqslant 1$ for $(x,y)\in\mathcal{S}$, this ensures
\begin{equation}\label{E_{n}_bounds}
0\leqslant E_{n}(r_{1}/n,r_{2}/n)\leqslant 1 \text{ for each }(r_{1}/n,r_{2}/n)\in \mathcal{S}_{n}.
\end{equation}
Simple computations using \eqref{A_{n+1}_distribution}, \eqref{B_{n+1}_distribution} and the fact that the samples are drawn without replacement reveal that $\{\Delta M_{n+1}:n\geqslant N\}$, where $\Delta M_{n+1}=(\Delta M_{n+1,1},\Delta M_{n+1,2})^{t}$ is defined using \eqref{many_defns_without_replacement_varying_law}, forms a martingale difference sequence with respect to $\{\mathcal{F}_{n+1}:n\geqslant N\}$. An identity analogous to \eqref{quadratic_variation_with_replacement_scenario_1}, with $H_{0}(i^{-1}A_{i},i^{-1}B_{i})$ replaced by $E_{i}(i^{-1}A_{i},i^{-1}B_{i})$ for each $i\in\{N,\ldots,n\}$, is true in this set-up, which, along with \eqref{E_{n}_bounds}, allows us to conclude that the martingale $\{\sum_{i=N}^{n}\gamma_{i}\Delta M_{i+1}:n\geqslant N\}$ converges almost surely. 

To verify the rest of Assumption~\eqref{gen_assump_a.s._3}, we set $\delta_{n}=(\delta_{n,1},\delta_{n,2})^{t}$, with $\delta_{n,i}$ as defined in \eqref{many_defns_without_replacement_varying_law} for each $i\in\{1,2\}$. Note that $(n^{-1}A_{n},n^{-1}B_{n})$ always takes values in $\mathcal{S}_{n}$. That the series $\sum_{n}\gamma_{n}\delta_{n}$ converges almost surely is established exactly as it was done in the proof of Theorem~\ref{thm:main_2} -- the only difference being: 
\begin{enumerate}
\item when $g$ is Lipschitz, we use \eqref{Bernstein_error_Lipschitz_E_{n}} of Lemma~\ref{lem:Bernstein_approximation_errors_Holder_continuous}, with $\alpha=1$, instead of \eqref{Bernstein_error_Lipschitz_H_{n}}; 
\item when $g\in \mathcal{C}^{(1)}(\mathcal{S})$, we make use of \eqref{Bernstein_error_C^{1}_E_{n}} of Lemma~\ref{lem:Bernstein_approximation_errors_C^{1}}, instead of \eqref{Bernstein_error_C^{1}_H_{n}};
\item and finally, when $g\in\mathcal{C}^{(2)}(\mathcal{S})$, we make use of \eqref{Bernstein_error_C^{2}_E_{n}} of Lemma~\ref{lem:Bernstein_approximation_errors_C^{2}}, instead of \eqref{Bernstein_error_C^{2}_H_{n}}.
\end{enumerate}
Thus, Assumption~\eqref{gen_assump_a.s._3} holds, and the conclusion follows via Theorem~\ref{thm:borkar_a.s.}.
\end{proof}

\begin{proof}[Proof of Theorem~\ref{thm:main_1_special}]
It suffices for us to show that, when \eqref{contraction_criterion_H_{0}} holds (with $H_{0}$ as defined in \eqref{H_{0}_defn}), 
\begin{enumerate}
\item the function $h$, defined in \eqref{h_H_defns}, has a unique root, say $(x^{*},y^{*})$, in $\mathcal{S}$, 
\item and the only compact, connected, internally chain transitive invariant set corresponding to \eqref{ODE_1} that is contained in $\mathcal{S}$ is the singleton, $\{(x^{*},y^{*})\}$.
\end{enumerate}
These have been established via several steps, each of which has been highlighted below.

\textbf{Proving that $h$ has a unique root in $\mathcal{S}$:} This is equivalent to showing that $H$, defined in \eqref{h_H_defns}, has a unique fixed point in $\mathcal{S}$, when \eqref{contraction_criterion_H_{0}} holds. We first argue that $H$ has at least one fixed point in $\mathcal{S}$. By \eqref{H_{0}_bounds},
\begin{equation}\label{H_maps_domain_to_itself}
\begin{cases}
&q_{1}H_{0}(x,y) \geqslant 0 \quad \text{and} \quad q_{2}\left\{1-H_{0}(x,y)\right\}\geqslant 0 \text{ for all } (x,y)\in\mathcal{S},\\
&0<\min\{q_{1},q_{2}\}\leqslant q_{1}H_{0}(x,y)+q_{2}\left\{1-H_{0}(x,y)\right\}\leqslant\max\{q_{1},q_{2}\}<1 \text{ for all } (x,y)\in\mathcal{S},
\end{cases}
\end{equation}
implying, from \eqref{h_H_defns} and \eqref{domain_defn}, that the function $H$ maps the compact, convex subset $\mathcal{S}$ of the Euclidean space $\mathbb{R}^{2}$, to itself. Moreover, $H$ is continuous throughout $\mathcal{S}$ since $H_{0}$ is a polynomial. Consequently, Brouwer's Fixed Point Theorem yields the conclusion that $H$ has at least one fixed point in $\mathcal{S}$.

Next, we establish the uniqueness of this fixed point when \eqref{contraction_criterion_H_{0}} holds, by establishing that $H$ is a contraction on $\mathcal{S}$. Given any matrix $A$ of order $m\times n$, for any $m, n \in\mathbb{N}$, in which the entry in the $i$-th row and the $j$-th column is denoted by $a_{i,j}$ for $i\in[m]$ and $j\in[n]$, its \emph{$1$-norm}, indicated by $||A||_{1}$, is defined as $||A||_{1}=\max\{\sum_{i=1}^{m}|a_{i,j}|:j\in[n]\}$. Simple computations yield $||AB||_{1}\leqslant ||A||_{1}||B||_{1}$ for any two matrices $A$ and $B$ of orders $m\times n$ and $n\times p$ respectively, proving that the $1$-norm is \emph{submultiplicative}.  

Since $H_{0}$ is a polynomial, for any $(x_{1},y_{1}), (x_{2},y_{2})\in\mathcal{S}$, we can write, using the mean value theorem,
\begin{align}
H(x_{1},y_{1})-H(x_{2},y_{2})={}&J_{H}(x_{3},y_{3})\left(\begin{bmatrix}
x_{1}\\
y_{1}
\end{bmatrix}-\begin{bmatrix}
x_{2}\\
y_{2}
\end{bmatrix}\right),\nonumber
\end{align}
where $J_{H}(x_{3},y_{3})$ is the Jacobian matrix for $H$ evaluated at some point $(x_{3},y_{3})$ which lies on the line segment joining the points $(x_{1},y_{1})$ and $(x_{2},y_{2})$. Since $\mathcal{S}$ is convex and $(x_{1},y_{1}), (x_{2},y_{2})\in\mathcal{S}$, hence $(x_{3},y_{3})\in\mathcal{S}$ as well. Applying the submultiplicative nature of the $1$-norm defined in the previous paragraph, we obtain:
\begin{align}
{}&\left|\left|H(x_{1},y_{1})-H(x_{2},y_{2})\right|\right|_{1}\leqslant (q_{1}+q_{2})\max\left\{\left|\frac{\partial H_{0}}{\partial x}\Big|_{(x_{3},y_{3})}\right|,\left|\frac{\partial H_{0}}{\partial y}\Big|_{(x_{3},y_{3})}\right|\right\}\left|\left|\begin{bmatrix}
x_{1}\\
y_{1}
\end{bmatrix}-\begin{bmatrix}
x_{2}\\
y_{2}
\end{bmatrix}\right|\right|_{1}\nonumber\\
\leqslant{}&(q_{1}+q_{2})\max\left\{\sup\left\{\left|\frac{\partial H_{0}}{\partial x}\right|:(x,y)\in\mathcal{S}\right\},\sup\left\{\left|\frac{\partial H_{0}}{\partial y}\right|:(x,y)\in\mathcal{S}\right\}\right\}\left|\left|\begin{bmatrix}
x_{1}\\
y_{1}
\end{bmatrix}-\begin{bmatrix}
x_{2}\\
y_{2}
\end{bmatrix}\right|\right|_{1},\nonumber
\end{align}
so that $H$ is immediately seen to be a contraction on $\mathcal{S}$ when \eqref{contraction_criterion_H_{0}} holds. This ensures, by the Banach fixed point theorem, that there exists, in $\mathcal{S}$, a \emph{unique} fixed point, say $(x^{*},y^{*})$, of the function $H$.

\textbf{Proving that $\mathcal{S}$ is a positively invariant set corresponding to \eqref{ODE_1}:} Here, we need to invoke some notions and terminology from the literature on ordinary differential equations. First, we prove that $\mathcal{S}$ is a \emph{positively invariant set} (see Definition 4.1 of \cite{blanchini2008set}) with respect to the autonomous ODE in \eqref{ODE_1}. This is accomplished by making use of Nagumo's Theorem (see \cite{nagumo1942lage} as well as \S~4.2 of \cite{blanchini2008set}), which, in turn, makes use of \emph{Bouligand's tangent cone} (see Definition 4.6 of \cite{blanchini2008set}, or \cite{bouligand1932introduction}): given a closed subset $S$ of $\mathbb{R}^{d}$ for any $d\in\mathbb{N}$, and $\pmb{x}\in S$, the \emph{tangent cone} to $S$ at $\pmb{x}$ is defined as
\begin{equation}
\mathcal{T}_{S}(\pmb{x})=\left\{\pmb{y}\in\mathbb{R}^{d}:\liminf_{t\rightarrow 0+}\frac{\text{dist}(\pmb{x}+t\pmb{y},S)}{t}=0\right\},\label{tangent_cone}
\end{equation}
where, for $\pmb{z}\in\mathbb{R}^{d}$, we define dist$(\pmb{z},S)=\inf\left\{||\pmb{z}-\pmb{w}||:\pmb{w}\in S\right\}$, with $||\cdot||$ any prespecified norm on $\mathbb{R}^{d}$. By \eqref{domain_defn}, the set $\mathcal{S}$ is indeed closed, and for any $(x,y)\in\mathcal{S}$ and any $t\in(0,1]$, using \eqref{h_H_defns}, we have:
\begin{align}
\begin{bmatrix}
x\\
y
\end{bmatrix}+t
\cdot h(x,y)=\begin{bmatrix}
x\\
y
\end{bmatrix}+t\begin{bmatrix}
q_{1}H_{0}(x,y)-x\\
q_{2}\left\{1-H_{0}(x,y)\right\}-y
\end{bmatrix}=(1-t)\begin{bmatrix}
x\\
y
\end{bmatrix}+t\begin{bmatrix}
q_{1}H_{0}(x,y)\\
q_{2}\left\{1-H_{0}(x,y)\right\}.\label{convex_combination_1}
\end{bmatrix}
\end{align}
Note that this is a convex combination of two points both of which belong to $\mathcal{S}$ (this follows from \eqref{H_maps_domain_to_itself}), and since $\mathcal{S}$ itself is a convex set, the point obtained in \eqref{convex_combination_1} must be in $\mathcal{S}$ as well. Therefore, 
\begin{equation}
\text{dist}\left(\begin{bmatrix}
x\\
y
\end{bmatrix}+t\cdot h(x,y), \mathcal{S}\right)=0 \text{ for each } t\in(0,1] \implies h(x,y)\in\mathcal{T}_{\mathcal{S}}\left(\begin{bmatrix}
x\\
y
\end{bmatrix}\right),\label{h(x,y)_in_tangent_cone}
\end{equation}
for each $(x,y)\in\mathcal{S}$, with $\mathcal{T}_{\mathcal{S}}(\cdot)$ defined as in \eqref{tangent_cone}. Next, we have to show that \eqref{ODE_1} has a unique \emph{global} solution (i.e.\ a solution defined for all $t\geqslant 0$) for each $(x(0),y(0))\in\mathcal{S}$. From the fact that $h$ is Lipschitz on $\mathcal{S}$ (as aruged in the proof of Theorem~\ref{thm:main_1}) and by Remark~\ref{rem:Lipschitz_extension}, there exists an extension of $h$ to a function $\overline{h}:\mathbb{R}^{2}\rightarrow\mathbb{R}^{2}$ such that $\overline{h}$ is Lipschitz throughout $\mathbb{R}^{2}$, with Lipschitz constant the same as that for $h$ on $\mathcal{S}$. From [\cite{borkar2008stochastic}, Appendix B], we then conclude that the autonomous ODE
\begin{equation}
\left(\dot{x}(t),\dot{y}(t)\right)=\overline{h}\left(x(t),y(t)\right) \text{ for }t\geqslant 0, \text{ with } \left(x(0),y(0)\right)\in\mathbb{R}^{2},\label{ODE_3}
\end{equation}
is well-posed, i.e.\ there exists a \emph{unique} solution, $(x(t),y(t))$, to the initial value problem in \eqref{ODE_3}, for all $t\geqslant 0$. This conclusion, along with \eqref{h(x,y)_in_tangent_cone}, allows us to establish, via Nagumo's Theorem (see Corollary 4.8 of \cite{blanchini2008set}), that $\mathcal{S}$ is a positively invariant set corresponding to \eqref{ODE_3}, and consequently, to \eqref{ODE_1} (since \eqref{ODE_3} reduces to \eqref{ODE_1} when $(x(0),y(0))\in\mathcal{S}$). 

\textbf{An application of the Poincar\'{e}-Bendixson Theorem and the Bendixson Criterion:} Let $\omega(x,y)$ denote the \emph{$\omega$-limit set} of $(x,y)$ (see Definitions 8.1.1 and 8.1.2 of \cite{wiggins2003introduction}) corresponding to \eqref{ODE_1}, for each $(x,y)\in\mathcal{S}$. Since $\mathcal{S}$ is a compact and simply connected set that is positively invariant corresponding to \eqref{ODE_1}, we conclude, by the Poincar\'{e}-Bendixson Theorem (see Theorem~9.0.6 of \cite{wiggins2003introduction}) that
\begin{enumerate}
\item\label{Poincare_Bendixson_1} either $\omega(x,y)$ is a singleton consisting of a root of the function $h$,
\item\label{Poincare_Bendixson_2} or $\omega(x,y)$ is a \emph{closed orbit} (also called a \emph{periodic orbit} -- see Definition 4.0.1 of \cite{wiggins2003introduction}), and since $\mathcal{S}$ has already been shown to be positively invariant, such an orbit must lie entirely in $\mathcal{S}$,
\item\label{Poincare_Bendixson_3} or $\omega(x,y)$ consists of a finite number of roots of $h$, say $(p_{1},q_{1}), (p_{2},q_{2}), \ldots, (p_{n},q_{n})$, and orbits $\gamma_{i,j}$, such that $\alpha(\gamma_{i,j})=\{(p_{i},q_{i})\}$ and $\omega(\gamma_{i,j})=\{(p_{j},q_{j})\}$ for each $i$, $j$ (again, we refer to Definitions 8.1.1 and 8.1.2 of \cite{wiggins2003introduction} for the definition of \emph{$\alpha$-limit sets}).
\end{enumerate} 
Since $h$ has a unique root in $\mathcal{S}$, hence we need only focus on Options \eqref{Poincare_Bendixson_1} and \eqref{Poincare_Bendixson_2}. To rule out option \eqref{Poincare_Bendixson_2}, we apply Bendixson's criterion (Theorem 4.1.1 of \cite{wiggins2003introduction}). From \eqref{h_H_defns} and \eqref{contraction_criterion_H_{0}}, we have, for $(u,v)\in\mathcal{S}$:
\begin{align}
{}&\frac{\partial h_{1}}{\partial x}\Big|_{(u,v)}+\frac{\partial h_{2}}{\partial y}\Big|_{(u,v)}=q_{1}\frac{\partial H_{0}}{\partial x}\Big|_{(u,v)}-1-q_{2}\frac{\partial H_{0}}{\partial y}\Big|_{(u,v)}-1\leqslant q_{1}\left|\frac{\partial H_{0}}{\partial x}\Big|_{(u,v)}\right|+q_{2}\left|\frac{\partial H_{0}}{\partial y}\Big|_{(u,v)}\right|-2\nonumber\\
\leqslant{}& (q_{1}+q_{2})\max\left\{\sup\left\{\left|\frac{\partial H_{0}}{\partial x}\right|:(x,y)\in\mathcal{S}\right\},\sup\left\{\left|\frac{\partial H_{0}}{\partial y}\right|:(x,y)\in\mathcal{S}\right\}\right\}-2<-1,\label{Bendixson_sign}
\end{align}
showing that on the simply connected region $\mathcal{S}$, the expression at the beginning of \eqref{Bendixson_sign} is neither identically zero nor does it change sign, so that there exists no closed orbit that lies entirely in $\mathcal{S}$. This rules out option \eqref{Poincare_Bendixson_2}, allowing us to conclude that for each $(x,y)\in\mathcal{S}$, we have $\omega(x,y)=\{(x^{*},y^{*})\}$, where $(x^{*},y^{*})$ is the unique root of $h$ in $\mathcal{S}$.

\textbf{Proving that the only compact, connected, internally chain transitive invariant set corresponding to \eqref{ODE_1} is the singleton $\{(x^{*},y^{*})\}$:} Given an autonomous ODE, $\dot{x}(t)=f(x(t))$, where $f:\mathbb{R}^{d}\rightarrow\mathbb{R}^{d}$, a closed subset $\mathcal{O}\subset\mathbb{R}^{d}$ is said to be an \emph{internally chain transitive invariant set} corresponding to this ODE if
\begin{enumerate}[label=(I\arabic*), ref=I\arabic*]
\item \label{I1} for each initial value $x(0)\in\mathcal{O}$, we have $x(t)\in\mathcal{O}$ for each $t\in\mathbb{R}$ (we replace invariance by positive invariance if this assertion is true for $t\geqslant 0$, and by negative invariance if this is true for $t\leqslant 0$),
\item \label{I2} and for any $u,v\in\mathcal{O}$, any $\epsilon>0$ and any $T>0$, there exist $n\in\mathbb{N}$ and points $u_{1},u_{2},\ldots,u_{n-1}\in\mathcal{O}$ such that, if we set $u_{0}=u$ and $u_{n}=v$, then the trajectory of the above-mentioned ODE that is initiated at $u_{i}$ meets with the $\epsilon$-neighbourhood of $u_{i+1}$ after a time $\geqslant T$, for each $i\in\{0,1,\ldots,n-1\}$.
\end{enumerate}
Evidently, an internally chain transitive invariant set has to be both positively and negatively invariant, and hence, it suffices for us to show that the only compact, connected, internally chain transitive positively invariant set corresponding to \eqref{ODE_1} (with the initial value $(x(0),y(0))$ contained in $\mathcal{S}$) is $\{(x^{*},y^{*})\}$.

Since we have already established $\mathcal{S}$ to be positively invariant corresponding to \eqref{ODE_1}, if $M$ is a compact, connected, internally chain transitive positively invariant set corresponding to \eqref{ODE_1} with $(x(0),y(0))\in\mathcal{S}$, then $M\subset\mathcal{S}$. There are two possibilities:
\begin{enumerate}
\item Suppose $(x^{*},y^{*})\in M$. If possible, let $M$ contain a second point, say $(x',y')$. Since $h(x^{*},y^{*})=0$, the solution to \eqref{ODE_1} initiated at $(x^{*},y^{*})$ remains forever at $(x^{*},y^{*})$, and the condition in \eqref{I2} cannot be satisfied for any $T>0$, any sufficiently small $\epsilon>0$ and any $n\in\mathbb{N}$ when we set $u_{0}=(x^{*},y^{*})$ and $u_{n}=(x',y')$. This brings us to a contradiction, proving that in this case, $M=\{(x^{*},y^{*})\}$. 
\item Suppose $(x^{*},y^{*})\notin M$. In this case, $M$ cannot be a singleton, because if it is, then due to the positive invariance of $M$, the point in $M$ must equal a constant solution to \eqref{ODE_1}, and therefore, a root of $h$ in $\mathcal{S}$ -- but we have shown that the only root of $h$ in $\mathcal{S}$ is $(x^{*},y^{*})$. 

Next, since $M$ is closed, there must exist some $\delta>0$ such that $B\left((x^{*},y^{*}),\delta\right)\cap M=\emptyset$, where $B\left((x^{*},y^{*}),\delta\right)$ indicates the ball of radius $\delta$ around $(x^{*},y^{*})$. Let $(x_{0},y_{0})\in M$, and since, as proved above, the $\omega$-limit set $\omega(x_{0},y_{0})=\{(x^{*},y^{*})\}$, hence there exists some $T>0$ such that, if $(x(t),y(t))$ indicates the solution to \eqref{ODE_1} initiated at $(x_{0},y_{0})$, then 
\begin{equation}
\left|\left|(x(t),y(t))-(x^{*},y^{*})\right|\right|_{1}<\delta/2 \text{ for all } t\geqslant T.\label{eq_1}
\end{equation}
Since $M$ is not a singleton, there must exist at least one other point, say $(x',y')$, in $M$. For $\epsilon=\delta/2$ and $T$ as defined via \eqref{eq_1}, we can find some $n\in\mathbb{N}$, and $(x_{1},y_{1}),(x_{2},y_{2}),\ldots,(x_{n-1},y_{n-1})\in M$, such that, setting $u_{i}=(x_{i},y_{i})$ for $i\in\{0,1,\ldots,n-1\}$ and $u_{n}=(x',y')$, the criterion stated in \eqref{I2} is satisfied. Thus, the solution to \eqref{ODE_1}, initiated at $(x_{0},y_{0})$, enters the $(\delta/2)$-neighbourhood of $(x_{1},y_{1})$ after time $T$, so that we have
\begin{equation}
\left|\left|(x(t'),y(t'))-(x_{1},y_{1})\right|\right|_{1}<\delta/2 \text{ for some } t'\geqslant T.\label{eq_2}
\end{equation}
Combining \eqref{eq_1} and \eqref{eq_2} (with $t=t'$ in \eqref{eq_1}), we see that $\left|\left|(x_{1},y_{1})-(x^{*},y^{*})\right|\right|_{1}<\delta$, even though $(x_{1},y_{1})\in M$. This brings us to a contradiction, proving that the case $(x^{*},y^{*})\notin M$ is simply not feasible.
\end{enumerate}
From Theorem~\ref{thm:main_1} and the conclusion drawn above, the sequence $\left\{\left(n^{-1}A_{n},n^{-1}B_{n}\right)\right\}$ converges almost surely to the unique root, $(x^{*},y^{*})$, of $h$ in $\mathcal{S}$ when \eqref{contraction_criterion_H_{0}} holds. This proves the first part of Theorem~\ref{thm:main_1_special}.

To prove the second part of Theorem~\ref{thm:main_1_special}, we show that if \eqref{contraction_criterion_g} holds, then so does \eqref{contraction_criterion_H_{0}}. Fix any $(u,v)\in\mathcal{S}$. By \eqref{H_{0}_defn} and the mean value theorem, for \emph{some} $\xi_{i}$ that lies between $i/k$ and $(i+1)/k$,  
\begin{align}
{}&\left|\frac{\partial H_{0}}{\partial x}\Big|_{(u,v)}\right|=\left|\sum_{k=1}^{M}\mu(k)k\sum_{i=0}^{k-1}\sum_{j=0}^{k-1-i}\left\{g\left(\frac{i+1}{k},\frac{j}{k}\right)-g\left(\frac{i}{k},\frac{j}{k}\right)\right\}{k-1\choose i}{k-1-i\choose j}u^{i}v^{j}(1-u-v)^{k-1-i-j}\right|\nonumber\\
\leqslant{}&\sum_{k=1}^{M}\mu(k)\sum_{i=0}^{k-1}\sum_{j=0}^{k-1-i}\left|\frac{\partial g}{\partial x}\Big|_{(\xi_{i},j/k)}\right|{k-1\choose i}{k-1-i\choose j}u^{i}v^{j}(1-u-v)^{k-1-i-j}\leqslant\sup\left\{\left|\frac{\partial g}{\partial x}\right|:(x,y)\in\mathcal{S}\right\}.\nonumber
\end{align}
Likewise, we can show that $|\partial H_{0}/\partial y|_{(u,v)}|\leqslant\sup\{|\partial g/\partial y|:(x,y)\in\mathcal{S}\}$. These inequalities imply that when \eqref{contraction_criterion_g} holds, the inequality in \eqref{contraction_criterion_H_{0}} is satisfied as well, and the same conclusion as asserted upon in the first part of Theorem~\ref{thm:main_1_special} follows.
\end{proof}

\begin{proof}[Proof of Theorem~\ref{thm:main_2_special}]
Theorem~\ref{thm:main_2_special} is proved the same way as Theorem~\ref{thm:main_1_special}, with the functions $\hat{H}$ and $\hat{h}$, defined in \eqref{hat{H}_hat{h}_defn}, replacing, respectively, the functions $H$ and $h$ defined in \eqref{h_H_defns}. Since one of \eqref{thm:main_2:C^{1}} and \eqref{thm:main_2:C^{2}} is true, $g\in\mathcal{C}^{(1)}(\mathcal{S})$, so that the partial derivatives of $g$ are well-defined and continuous throughout $\mathcal{S}$.

Similar to \eqref{H_maps_domain_to_itself}, we have, since $0\leqslant g(x,y)\leqslant 1$ for all $(x,y)\in\mathcal{S}$:
\begin{equation}
\min\left\{q_{1}g(x,y),q_{2}[1-g(x,y)]\right\}\geqslant 0, \quad q_{1}g(x,y)+q_{2}\{1-g(x,y)\}\leqslant \max\{q_{1},q_{2}\}\leqslant 1,\label{hat{H}_coordinates_sum_bounds}
\end{equation}
so that $\hat{H}$ maps the compact, convex $\mathcal{S}$ to itself, and must, therefore, have at least one fixed point in $\mathcal{S}$, by Brouwer's fixed point theorem. Using the submultiplicative nature of the $1$-norm and the mean value theorem, we can write, for any $(x_{1},y_{1}), (x_{2},y_{2})\in\mathcal{S}$ and for \emph{some} $(\xi_{1},\xi_{2})$ lying on the line segment joining $(x_{1},y_{1})$ and $(x_{2},y_{2})$ (which also means that $(\xi_{1},\xi_{2})\in\mathcal{S}$, since $\mathcal{S}$ is convex):
\begin{align}
{}&\left|\left|\hat{H}(x_{1},y_{1})-\hat{H}(x_{2},y_{2})\right|\right|_{1}\leqslant\left|\left|\begin{bmatrix}
q_{1}\partial g/\partial x\big|_{(\xi_{1},\xi_{2})} & q_{1}\partial g/\partial y\big|_{(\xi_{1},\xi_{2})}\\
-q_{2}\partial g/\partial x\big|_{(\xi_{1},\xi_{2})} & -q_{2}\partial g/\partial y\big|_{(\xi_{1},\xi_{2})}
\end{bmatrix}\right|\right|_{1}\left|\left|\begin{bmatrix}
x_{1}\\
y_{1}
\end{bmatrix}-\begin{bmatrix}
x_{2}\\
y_{2}
\end{bmatrix}\right|\right|_{1}\nonumber\\
={}&(q_{1}+q_{2})\max\left\{\left|\partial g/\partial x\big|_{(\xi_{1},\xi_{2})}\right|,\left|\partial g/\partial y\big|_{(\xi_{1},\xi_{2})}\right|\right\}\left|\left|\begin{bmatrix}
x_{1}\\
y_{1}
\end{bmatrix}-\begin{bmatrix}
x_{2}\\
y_{2}
\end{bmatrix}\right|\right|_{1}\nonumber\\
\leqslant{}&(q_{1}+q_{2})\max\left\{\sup\left\{\left|\partial g/\partial x\right|:(x,y)\in\mathcal{S}\right\},\sup\left\{\left|\partial g/\partial y\right|:(x,y)\in\mathcal{S}\right\}\right\}\left|\left|\begin{bmatrix}
x_{1}\\
y_{1}
\end{bmatrix}-\begin{bmatrix}
x_{2}\\
y_{2}
\end{bmatrix}\right|\right|_{1},\nonumber
\end{align}
so that $\hat{H}$ is evidently a contraction on $\mathcal{S}$ when \eqref{contraction_criterion_g} holds. By the Banach fixed point theorem, $\hat{H}$ has a unique fixed point, say $(x^{*},y^{*})$, in $\mathcal{S}$.

Using \eqref{h_H_defns}, the fact that $\hat{H}(x,y)\in\mathcal{S}$ for each $(x,y)\in\mathcal{S}$, as evident from \eqref{hat{H}_coordinates_sum_bounds}, and the fact that $\mathcal{S}$ is a convex set, we have $(x,y)+t\cdot\hat{h}(x,y)=(1-t)(x,y)+t\cdot\hat{H}(x,y)\in\mathcal{S}$ for any $(x,y)\in\mathcal{S}$. Therefore, the distance between the point $(x,y)+t\cdot\hat{h}(x,y)$ and the set $\mathcal{S}$ equals $0$ for each $t\in(0,1]$, leading to $\hat{h}(x,y)\in\mathcal{T}_{\mathcal{S}}(x,y)$ for all $(x,y)\in\mathcal{S}$, where $\mathcal{T}_{\mathcal{S}}(\cdot)$ is as defined in \eqref{tangent_cone}. Since $\hat{h}$ is Lipschitz throughout $\mathcal{S}$ (as argued in the proof of Theorem~\ref{thm:main_2}), by Remark~\ref{rem:Lipschitz_extension}, there exists an extension of $\hat{h}$ to $\tilde{h}:\mathbb{R}^{2}\rightarrow\mathbb{R}^{2}$ such that $\tilde{h}$ is Lipschitz on $\mathbb{R}^{2}$ with the same Lipschitz constant as that of $\hat{h}$ on $\mathcal{S}$. Therefore, the ODE 
\begin{equation}
\left(\dot{x}(t),\dot{y}(t)\right)=\tilde{h}\left(x(t),y(t)\right) \text{ for } t\geqslant 0, \text{ with } \left(x(0),y(0)\right)\in\mathbb{R}^{2},\label{ODE_4}
\end{equation}
is well-posed, so that by Nagumo's Theorem, we conclude that $\mathcal{S}$ is a positively invariant set corresponding to \eqref{ODE_4}, and hence, also, corresponding to \eqref{ODE_2} (since \eqref{ODE_4} boils down to \eqref{ODE_2} when $(x(0),y(0))\in\mathcal{S}$).

Similar to \eqref{Bendixson_sign}, from \eqref{hat{H}_hat{h}_defn} and \eqref{contraction_criterion_g}, we have, for any $(x,y)\in\mathcal{S}$:
\begin{align}
\frac{\partial \hat{h}_{1}}{\partial x}+\frac{\partial \hat{h}_{2}}{\partial y}=q_{1}\frac{\partial g}{\partial x}-1-q_{2}\frac{\partial g}{\partial y}-1\leqslant q_{1}\left|\frac{\partial g}{\partial x}\right|+q_{2}\left|\frac{\partial g}{\partial y}\right|-2<-1,\label{Bendixson_criterion_g}
\end{align} 
so that on the simply connected region $\mathcal{S}$, the leftmost expression of \eqref{Bendixson_criterion_g} is neither identically zero nor does it change sign, so that by Bendixson's criterion and the Poincar\'{e}-Bendixson Theorem, we conclude that the $\omega$-limit set $\omega(x,y)=\{(x^{*},y^{*})\}$ for every $(x,y)\in\mathcal{S}$. Finally, the claim that the only compact, connected, internally chain transitive invariant (in particular, positive invariant) set corresponding to \eqref{ODE_2} is the singleton set $\{(x^{*},y^{*})\}$, is established exactly as has been shown in the proof of Theorem~\ref{thm:main_1_special}.
\end{proof}

\begin{proof}[Proof of Theorem~\ref{thm:a.s._convergence_C_{n}}]
Recall, from \S\ref{sec:model}, that $C_{n}$ is the number of balls of colour $3$, up to and including epoch $n$, in the urn. Let $Y_{i}$ be the indicator for the event that the $i$-th ball added to the urn is of colour $3$. Then $C_{n}=\sum_{i=1}^{n}Y_{i}$, and \eqref{C_{n+1}_distribution} yields $\E[Y_{n+1}|\mathcal{F}_{n}]=(1-q_{1})f_{n}(n^{-1}A_{n},n^{-1}B_{n})$ for each $n\geqslant N$, where
\begin{equation}\label{f_{n}_defn_C_{n}}
\begin{cases} 
f_{n}(x,y)=H_{0}(x,y) \text{ for } (x,y)\in\mathcal{S}, &\text{ Scenario~\eqref{Scenario_1}, with replacement;}\\
f_{n}(x,y)=H_{n}(x,y) \text{ for } (x,y)\in\mathcal{S}, &\text{ Scenario~\eqref{Scenario_2}, with replacement;}\\
f_{n}\left(r_{1}/n,r_{2}/n\right)=F_{n}\left(r_{1}/n,r_{2}/n\right) \text{ for } \left(r_{1}/n,r_{2}/n\right)\in\mathcal{S}_{n}, &\text{ Scenario~\eqref{Scenario_1}, without replacement;}\\
f_{n}\left(r_{1}/n,r_{2}/n\right)=E_{n}\left(r_{1}/n,r_{2}/n\right) \text{ for }\left(r_{1}/n,r_{2}/n\right)\in \mathcal{S}_{n}, &\text{ Scenario~\eqref{Scenario_2}, without replacement;}
\end{cases}
\end{equation}
where $H_{0}$, $H_{n}$, $F_{n}$ and $E_{n}$ are as defined in \eqref{H_{0}_defn}, \eqref{H_{n}_defn}, \eqref{F_{n}_defn} and \eqref{E_{n}_defn} respectively. We then have
\begin{align}
\frac{C_{n+1}}{n+1}={}&\frac{1}{n+1}\sum_{i=0}^{N-1}Y_{i+1}+\frac{1}{n+1}\sum_{i=N}^{n}\Delta M_{i+1}+\left(\frac{n-N+1}{n+1}\right)\left(\frac{1-q_{1}}{n-N+1}\right)\sum_{i=N}^{n}f_{i}\left(\frac{A_{i}}{i},\frac{B_{i}}{i}\right).\label{C_{n}_SA}
\end{align}
It is immediate that the first term of \eqref{C_{n}_SA} converges to $0$ almost surely as $n\rightarrow\infty$. It is evident that $\{\sum_{i=1}^{n}\Delta M_{i}:n\in\mathbb{N}\}$, with $\Delta M_{i}=0$ for each $i\in[N]$ and $\Delta M_{n+1}=Y_{n+1}-(1-q_{1})f_{n}(n^{-1}A_{n},n^{-1}B_{n})$ for each $n\geqslant N$, forms a martingale with respect to $\{\mathcal{F}_{n}:n\in\mathbb{N}\}$. Elementary computations yield, along with the bounds shown in \eqref{H_{0}_bounds}, \eqref{H_{n}_bounds}, \eqref{F_{n}_bounds} and \eqref{E_{n}_bounds}:
\begin{align}
{}&\E\left[\Delta M_{n+1}^{2}\big|\mathcal{F}_{n}\right]=(1-q_{1})f_{n}\left(\frac{A_{n}}{n},\frac{B_{n}}{n}\right)\left\{1-(1-q_{1})f_{n}\left(\frac{A_{n}}{n},\frac{B_{n}}{n}\right)\right\}\leqslant\frac{1}{4}\implies \sum_{n}\frac{\E\left[\Delta M_{n+1}^{2}\big|\mathcal{F}_{n}\right]}{(n+1)^{2}} \text{ converges a.s.},\nonumber
\end{align}
so that by [\cite{hall2014martingale}, Theorem 2.18], we conclude that the second term in \eqref{C_{n}_SA} converges almost surely to $0$. 

From the urn process described in \S\ref{sec:model}, we know that $(n^{-1}A_{n},n^{-1}B_{n})$ takes values in $\mathcal{S}_{n}\subset\mathcal{S}$ for each $n\geqslant N$, where each $\mathcal{S}_{n}$ (as defined in \eqref{S_{n}_defn}) as well as $\mathcal{S}$ (as defined in \eqref{domain_defn}) is a compact subset of $\mathbb{R}^{2}$. When in Scenario~\eqref{Scenario_1}, Theorems~\ref{thm:main_1} and \ref{thm:main_3} guarantee that $Q_{n}=(n^{-1}A_{n},n^{-1}B_{n})$ converges almost surely to some $Q=(\overline{A},\overline{B})$ which must take values in $\mathcal{S}$. When in Scenario~\eqref{Scenario_2}, this is ensured by the assumption made in the statement of Theorem~\ref{thm:a.s._convergence_C_{n}}. In what follows, we use \eqref{f_{n}_defn_C_{n}}:
\begin{enumerate}
\item When in Scenario~\eqref{Scenario_1} and the samples are drawn with replacement, setting $\mathcal{A}_{n}=\mathcal{A}=\mathcal{S}$ and $f_{n}\equiv f\equiv H_{0}$ for each $n\geqslant N$, we apply the very last assertion made in Lemma~\ref{lem:a.s.convergence} to conclude that the third term of \eqref{C_{n}_SA} converges almost surely to $(1-q_{1})H_{0}(\overline{A},\overline{B})$.
\item When in Scenario~\eqref{Scenario_1} and the samples are drawn without replacement, we set $\mathcal{A}=\mathcal{S}$ and $f\equiv H_{0}$, so that $f\in\mathcal{C}^{(0)}(\mathcal{A})$ since $H_{0}$ is a polynomial, and we set $\mathcal{A}_{n}=\mathcal{S}_{n}$ for each $n\geqslant N$. By Lemma~\ref{lem:hypergeometric_approx_sample_size_iid}, we have $\left|f_{n}(r_{1}/n,r_{2}/n)-f(r_{1}/n,r_{2}/n)\right|=\left|F_{n}(r_{1}/n,r_{2}/n)-H_{0}(r_{1}/n,r_{2}/n)\right|=O(n^{-1})$ for each $(r_{1}/n,r_{2}/n)\in\mathcal{S}_{n}$. By the first part of Lemma~\ref{lem:a.s.convergence}, we deduce that the third term of \eqref{C_{n}_SA} converges almost surely to $(1-q_{1})H_{0}(\overline{A},\overline{B})$.
\item When in Scenario~\eqref{Scenario_2} and the samples are drawn with replacement, we set $\mathcal{A}_{n}=\mathcal{A}=\mathcal{S}$ for each $n\geqslant N$, and $f\equiv g$, for $g$ as defined in \eqref{g_defn}. Each $H_{n}$, as evident from \eqref{H_{n}_defn}, is a polynomial, so that $H_{n}\in\mathcal{C}^{(0)}(\mathcal{S})$. That $\{H_{n}\}$ converges uniformly to $g$ on $\mathcal{S}$ is ensured by
\begin{enumerate}
\item \eqref{Bernstein_error_Lipschitz_H_{n}} of Lemma~\ref{lem:Bernstein_approximation_errors_Holder_continuous} and \eqref{thm:C_{n}_a.s.:g_Lipschitz} when $g$ is Lipschitz on $\mathcal{S}$;
\item \eqref{Bernstein_error_C^{1}_H_{n}} of Lemma~\ref{lem:Bernstein_approximation_errors_C^{1}} and \eqref{thm:C_{n}_a.s.:g_C^{1}} when $g\in\mathcal{C}^{(1)}(\mathcal{S})$;
\item \eqref{Bernstein_error_C^{2}_H_{n}} of Lemma~\ref{lem:Bernstein_approximation_errors_C^{2}} and \eqref{thm:C_{n}_a.s.:g_C^{2}} when $g\in\mathcal{C}^{(2)}(\mathcal{S})$.
\end{enumerate}
By the second part of Lemma~\ref{lem:a.s.convergence}, we deduce that the third term of \eqref{C_{n}_SA} converges almost surely to $(1-q_{1})g(\overline{A},\overline{B})$. 
\item Finally, when in Scenario~\eqref{Scenario_2} and the samples are drawn without replacement, we set $\mathcal{A}=\mathcal{S}$, $f\equiv g$ and $\mathcal{A}_{n}=\mathcal{S}_{n}$ for each $n\geqslant N$. Since one of \eqref{thm:C_{n}_a.s.:g_Lipschitz}, \eqref{thm:C_{n}_a.s.:g_C^{1}} and \eqref{thm:C_{n}_a.s.:g_C^{2}} is true, we have $g\in\mathcal{C}^{(0)}(\mathcal{S})$. Referring to \eqref{uniform_convergence_criterion} of Lemma~\ref{lem:a.s.convergence}, we observe that 
\begin{enumerate}
\item $\beta_{n}=2^{-1/2}L\E[K_{n}^{-1/2}]$, by \eqref{Bernstein_error_Lipschitz_E_{n}} of Lemma~\ref{lem:Bernstein_approximation_errors_Holder_continuous}, if $g$ is Lipschitz on $\mathcal{S}$, and $\beta_{n}\rightarrow0$ by \eqref{thm:C_{n}_a.s.:g_Lipschitz};
\item $\beta_{n}=C\min\{\omega(\nabla g;{\E[K_{n}^{-1}]}^{1/2}){\E[K_{n}^{-1}]}^{1/2},\omega(\nabla g;\E[K_{n}^{-1}]/\E[K_{n}^{-1/2}])\E[K_{n}^{-1/2}]\}$, by \eqref{Bernstein_error_C^{1}_E_{n}} of Lemma~\ref{lem:Bernstein_approximation_errors_C^{1}}, when $g\in\mathcal{C}^{(1)}(\mathcal{S})$, and $\beta_{n}\rightarrow0$ by \eqref{thm:C_{n}_a.s.:g_C^{1}};
\item $\beta_{n}=C\E\left[K_{n}^{-1}\right]$, by \eqref{Bernstein_error_C^{2}_E_{n}} of Lemma~\ref{lem:Bernstein_approximation_errors_C^{2}}, when $g\in\mathcal{C}^{(2)}(\mathcal{S})$, and $\beta_{n}\rightarrow0$ by \eqref{thm:C_{n}_a.s.:g_C^{2}}.
\end{enumerate}
By the first part of Lemma~\ref{lem:a.s.convergence}, we now conclude that the third term of \eqref{C_{n}_SA} converges almost surely to $(1-q_{1})g(\overline{A},\overline{B})$.
\end{enumerate}

In particular, if $(\overline{A},\overline{B})=(x^{*},y^{*})$ for some $(x^{*},y^{*})\in\mathcal{S}$, then 
\begin{enumerate*}
\item $H(x^{*},y^{*})=(x^{*},y^{*})$ for $H$ as defined in \eqref{h_H_defns} when in Scenario~\eqref{Scenario_1}, so that $(1-q_{1})H_{0}(\overline{A},\overline{B})=(1-q_{1})x^{*}/q_{1}$, and
\item $\hat{H}(x^{*},y^{*})=(x^{*},y^{*})$ for $\hat{H}$ as defined in \eqref{hat{H}_hat{h}_defn} when in Scenario~\eqref{Scenario_2}, so that $(1-q_{1})g(\overline{A},\overline{B})=(1-q_{1})x^{*}/q_{1}$.  
\end{enumerate*}
This yields the second claim made in the statement of Theorem~\ref{thm:a.s._convergence_C_{n}}.
\end{proof}

\begin{proof}[Proof of Theorem~\ref{thm:main_5}]
The principle idea is to implement Theorems~\ref{thm:zhang_2.1}, \ref{thm:zhang_2.2} and \ref{thm:zhang_2.3}, for which we set $Z_{n}=(n^{-1}A_{n},n^{-1}B_{n},n^{-1}C_{n})$ for each $n\geqslant N$. Since we are in Scenario~\eqref{Scenario_1}, and either \eqref{contraction_criterion_H_{0}} holds or $g\in\mathcal{C}^{(1)}(\mathcal{S})$ and \eqref{contraction_criterion_g} is true, by Theorems~\ref{thm:main_1_special} and \ref{thm:a.s._convergence_C_{n}}, the sequence $\{(n^{-1}A_{n},n^{-1}B_{n},n^{-1}C_{n})\}$ converges almost surely to $\theta^{*}=(x^{*},y^{*},z^{*})$, where $(x^{*},y^{*})$ is the unique root of $h$ in $\mathcal{S}$ and $z^{*}=(1-q_{1})x^{*}/q_{1}$.

When the samples are drawn with replacement, we have, from \eqref{sa_1} and the first row of \eqref{f_{n}_defn_C_{n}}:
\begin{align}
\begin{bmatrix}
(n+1)^{-1}A_{n+1}\\
(n+1)^{-1}B_{n+1}\\
(n+1)^{-1}C_{n+1}
\end{bmatrix}={}&\begin{bmatrix}
n^{-1}A_{n}\\
n^{-1}B_{n}\\
n^{-1}C_{n}
\end{bmatrix}+\frac{1}{n+1}\left\{\begin{bmatrix}
\Delta M_{n+1,1}\\
\Delta M_{n+1,2}\\
\Delta M_{n+1,3}
\end{bmatrix}+G\left(\begin{bmatrix}
n^{-1}A_{n}\\
n^{-1}B_{n}\\
n^{-1}C_{n}
\end{bmatrix}\right)\right\},\label{sa_1_appended}
\end{align}
where $\Delta M_{n+1,1}$, $\Delta M_{n+1,2}$ are as defined in \eqref{martingale_difference_defn_scenario_1_with_replacement}, and for $\mathcal{T}=\{(x,y,z)\in[0,1]^{3}:x+y+z\leqslant 1\}$, we set
\begin{equation}\label{G_defn}
G(x,y,z)=\begin{bmatrix}
q_{1}H_{0}(x,y)-x\\
q_{2}\{1-H_{0}(x,y)\}-y\\
(1-q_{1})H_{0}(x,y)-z
\end{bmatrix} \text{ for }(x,y,z)\in\mathcal{T}, \ \Delta M_{n+1,3}=C_{n+1}-C_{n}-(1-q_{1})H_{0}\left(\frac{A_{n}}{n},\frac{B_{n}}{n}\right).
\end{equation}
From the previous paragraph, it is evident that if either \eqref{contraction_criterion_H_{0}} or \eqref{contraction_criterion_g} (when $g\in\mathcal{C}^{(1)}(\mathcal{S})$) is true, $\theta^{*}=(x^{*},y^{*},z^{*})$ is the unique root of $G$ in $\mathcal{T}$.

When the samples are drawn without replacement, we have, from \eqref{sa_3} and the third row of \eqref{f_{n}_defn_C_{n}}:
\begin{align}
\begin{bmatrix}
(n+1)^{-1}A_{n+1}\\
(n+1)^{-1}B_{n+1}\\
(n+1)^{-1}C_{n+1}
\end{bmatrix}={}&\begin{bmatrix}
n^{-1}A_{n}\\
n^{-1}B_{n}\\
n^{-1}C_{n}
\end{bmatrix}+\frac{1}{n+1}\left\{\begin{bmatrix}
\Delta M_{n+1,1}\\
\Delta M_{n+1,2}\\
\Delta M_{n+1,3}
\end{bmatrix}+\begin{bmatrix}
\delta_{n,1}\\
\delta_{n,2}\\
\delta_{n,3}
\end{bmatrix}+G\left(\begin{bmatrix}
n^{-1}A_{n}\\
n^{-1}B_{n}\\
n^{-1}C_{n}
\end{bmatrix}\right)\right\},\label{sa_2_appended}
\end{align}
where $\Delta M_{n+1,1}$, $\Delta M_{n+1,2}$, $\delta_{n,1}$ and $\delta_{n,2}$ are as defined in \eqref{many_defns_without_replacement_identical_law}, $G$ as defined in \eqref{G_defn}, and we set
\begin{equation}\label{third_error_term_without_replacement_identical_law}
\Delta M_{n+1,3}=C_{n+1}-C_{n}-(1-q_{1})F_{n}\left(\frac{A_{n}}{n},\frac{B_{n}}{n}\right) \text{ and } \delta_{n,3}=(1-q_{1})\left\{F_{n}\left(\frac{A_{n}}{n},\frac{B_{n}}{n}\right)-H_{0}\left(\frac{A_{n}}{n},\frac{B_{n}}{n}\right)\right\}.
\end{equation}

\textbf{Verification of Assumption~\eqref{gen_assump_dist_1}:} Since $H_{0}$, defined in \eqref{H_{0}_defn}, is a polynomial, hence $G\in\mathcal{C}^{(1)}(\mathcal{T})$, with
\begin{equation}\label{Jacobian_G_scenario_1}
J_{G}(x,y,z)=\begin{bmatrix}
q_{1}\partial H_{0}/\partial x-1 & q_{1}\partial H_{0}/\partial y & 0\\
-q_{2}\partial H_{0}/\partial x & -q_{2}\partial H_{0}/\partial y-1 & 0\\
(1-q_{1})\partial H_{0}/\partial x & (1-q_{1})\partial H_{0}/\partial y & -1
\end{bmatrix} \text{ for each }(x,y,z)\in\mathcal{T},
\end{equation} 
with eigenvalues (both of which are real)
\begin{equation}\label{gen_eigenvalues_Jacobian_G_scenario_1}
\begin{cases}
\lambda_{1}(x,y,z)=-1 &\text{ with algebraic multiplicity } 2,\\
\lambda_{2}(x,y,z)=-1+q_{1}\partial H_{0}/\partial x-q_{2}\partial H_{0}/\partial y &\text{ with algebraic multiplicity } 1.
\end{cases}
\end{equation}
By \eqref{contraction_criterion_H_{0}}, we have $\lambda_{2}(x,y,z)\leqslant -1+q_{1}|\partial H_{0}/\partial x|+q_{2}|\partial H_{0}/\partial y|<0$. In particular, this shows that each of $\lambda_{1}(\theta^{*})$ and $\lambda_{2}(\theta^{*})$ is strictly negative. This validates Assumption~\eqref{gen_assump_dist_1}.

\textbf{Verification of Assumption~\eqref{gen_assump_dist_2}:} Since $H_{0}$ is a polynomial, we have $H_{0}\in\mathcal{C}^{(2)}(\mathcal{S})$, and since $\mathcal{S}$ is compact, we have $M_{i,j}<\infty$ for each $(i,j)\in\{(1,1),(1,2),(2,2)\}$, where we define
\begin{equation}
M_{i,j}=\sup\left\{\left|\frac{\partial^{2}H_{0}}{\partial x_{i}\partial x_{j}}\right|:(x_{1},x_{2})\in\mathcal{S}\right\}.\nonumber
\end{equation}
A Taylor expansion of $H_{0}(x,y)$ around $(x^{*},y^{*})$ for any $(x,y,z)\in\mathcal{T}$, along with the AM-GM inequality, shows that $||G(x,y,z)-J_{G}(\theta^{*})[(x,y,z)-\theta^{*}]^{t}||_{2}$ is bounded above by $\{q_{1}^{2}+q_{2}^{2}+(1-q_{1})^{2}\}^{1/2}\{(M_{1,1}+M_{1,2})(x-x^{*})^{2}+(M_{1,2}+M_{2,2})(y-y^{*})^{2}\}/2$, which is $o(||(x,y,z)-\theta^{*}||_{2}^{1+\eta})$ for any $0<\eta<1$.

\textbf{Verification of Assumption~\eqref{gen_assump_dist_3}:} We first verify that \eqref{Lindeberg_cond} is satisfied for each $\epsilon>0$. Setting $\Delta M_{n+1}=(\Delta M_{n+1,1},\Delta M_{n+1,2},\Delta M_{n+1,3})^{t}$ for each $n\geqslant N$, we have, for any $\epsilon>0$:
\begin{align}
{}&\frac{1}{n-N}\sum_{i=N}^{n-1}\E\left[\left|\left|\Delta M_{i+1}\right|\right|_{2}^{2}\chi\left\{\left|\left|\Delta M_{i+1}\right|\right|_{2}\geqslant \epsilon\sqrt{n}\right\}\Big|\mathcal{F}_{i}\right]\nonumber\\
={}&\frac{\epsilon^{-2} n^{-1}}{n-N}\sum_{i=N}^{n-1}\E\left[\epsilon^{2}n\left|\left|\Delta M_{i+1}\right|\right|_{2}^{2}\chi\left\{\left|\left|\Delta M_{i+1}\right|\right|_{2}\geqslant \epsilon\sqrt{n}\right\}\Big|\mathcal{F}_{i}\right]\nonumber\\
\leqslant{}&\frac{\epsilon^{-2} n^{-1}}{n-N}\sum_{i=N}^{n-1}\E\left[\left|\left|\Delta M_{i+1}\right|\right|_{2}^{4}\chi\left\{\left|\left|\Delta M_{i+1}\right|\right|_{2}\geqslant \epsilon\sqrt{n}\right\}\Big|\mathcal{F}_{i}\right]\leqslant \frac{\epsilon^{-2} n^{-1}}{n-N}\sum_{i=N}^{n-1}\E\left[\left|\left|\Delta M_{i+1}\right|\right|_{2}^{4}\Big|\mathcal{F}_{i}\right]\nonumber\\
={}&\frac{3\epsilon^{-2} n^{-1}}{n-N}\sum_{i=N}^{n-1}\left\{\E\left[\Delta M_{i+1,1}^{4}\Big|\mathcal{F}_{i}\right]+\E\left[\Delta M_{i+1,2}^{4}\Big|\mathcal{F}_{i}\right]+\E\left[\Delta M_{i+1,3}^{4}\Big|\mathcal{F}_{i}\right]\right\} \quad \text{by AM-GM inequality.}\label{intermediate_15}
\end{align}
Since $\Delta M_{i+1,1}=A_{i+1}-A_{i}-q_{1}f_{i}(i^{-1}A_{i},i^{-1}B_{i})$ equals either $1-q_{1}f_{i}(i^{-1}A_{i},i^{-1}B_{i})$ or $-q_{1}f_{i}(i^{-1}A_{i},i^{-1}B_{i})$, where $f_{n}$ is given by either the first or the third row of \eqref{f_{n}_defn_C_{n}},  we conclude, by either \eqref{H_{0}_bounds} or \eqref{F_{n}_bounds}, that $|\Delta M_{i+1,1}|\leqslant 1$. The same bound is true for each of $|\Delta M_{i+1,2}|$ and $|\Delta M_{i+1,3}|$. Applying these bounds to the final expression in \eqref{intermediate_15}, we obtain the final bound of $9\epsilon^{-2}n^{-1}$, ensuring that \eqref{Lindeberg_cond} holds in this set-up. 

Next, we verify that \eqref{MDS_quadratic_variation_Cesaro_conv} is satisfied. Identities analogous to \eqref{quadratic_variation_with_replacement_scenario_1}, and computations utilizing the fact that at most one of $(A_{n+1}-A_{n})$, $(B_{n+1}-B_{n})$ and $(C_{n+1}-C_{n})$ can be non-zero, reveal that the element in the $i$-th row and the $j$-th column of $\E[\Delta M_{n+1}\Delta M_{n+1}^{t}|\mathcal{F}_{n}]$ equals: 
\begin{equation}\label{MDS_squared_entries}
\begin{cases}
q_{1}f_{n}\left(n^{-1}A_{n},n^{-1}B_{n}\right)\left\{1-q_{1}f_{n}\left(n^{-1}A_{n},n^{-1}B_{n}\right)\right\} &\text{ if }i=j=1,\\
-q_{1}q_{2}f_{n}\left(n^{-1}A_{n},n^{-1}B_{n}\right)\left\{1-f_{n}\left(n^{-1}A_{n},n^{-1}B_{n}\right)\right\} &\text{ if }\{i,j\}=\{1,2\},\\
q_{2}\left\{1-f_{n}\left(n^{-1}A_{n},n^{-1}B_{n}\right)\right\}\left[1-q_{2}\left\{1-f_{n}\left(n^{-1}A_{n},n^{-1}B_{n}\right)\right\}\right] &\text{ if }i=j=2,\\
-q_{1}(1-q_{1})f_{n}^{2}\left(n^{-1}A_{n},n^{-1}B_{n}\right) &\text{ if }\{i,j\}=\{1,3\},\\
-q_{2}(1-q_{1})f_{n}\left(n^{-1}A_{n},n^{-1}B_{n}\right)\left\{1-f_{n}\left(n^{-1}A_{n},n^{-1}B_{n}\right)\right\} &\text{ if }\{i,j\}=\{2,3\},\\
(1-q_{1})f_{n}\left(n^{-1}A_{n},n^{-1}B_{n}\right)\left\{1-(1-q_{1})f_{n}\left(n^{-1}A_{n},n^{-1}B_{n}\right)\right\} &\text{ if }i=j=3.
\end{cases}
\end{equation} 
When samples are drawn with replacement, by the first row of \eqref{f_{n}_defn_C_{n}}, we have $f_{n}\equiv H_{0}$, continuous throughout the compact set $\mathcal{S}$, and $(n^{-1}A_{n},n^{-1}B_{n})$ converges almost surely to $(x^{*},y^{*})$. By the last assertion made in the statement of Lemma~\ref{lem:a.s.convergence}, and since $(x^{*},y^{*})$ is a fixed point of $H$ defined in \eqref{h_H_defns}, we conclude that
\begin{align}
\frac{1}{n-N}\sum_{i=N}^{n-1}\E\left[(1,1)\text{-th entry of }\Delta M_{i+1}\Delta M_{i+1}^{t}\big|\mathcal{F}_{i}\right]\convas q_{1}H_{0}(x^{*},y^{*})\left\{1-q_{1}H_{0}(x^{*},y^{*})\right\}=x^{*}(1-x^{*}).\label{(1,1)-th_entry_limit}
\end{align}
When samples are drawn without replacement, by the third row of \eqref{f_{n}_defn_C_{n}}, we have $f_{n}\equiv F_{n}$ on $\mathcal{S}_{n}$ for each $n\geqslant N$. By Lemma~\ref{lem:hypergeometric_approx_sample_size_iid}, \eqref{F_{n}_bounds}, \eqref{H_{0}_bounds}, we have, for each $(r_{1}/n,r_{2}/n)\in\mathcal{S}_{n}$:
\begin{align}
{}&\left|q_{1}F_{n}\left(\frac{r_{1}}{n},\frac{r_{2}}{n}\right)\left\{1-q_{1}F_{n}\left(\frac{r_{1}}{n},\frac{r_{2}}{n}\right)\right\}-q_{1}H_{0}\left(\frac{r_{1}}{n},\frac{r_{2}}{n}\right)\left\{1-q_{1}H_{0}\left(\frac{r_{1}}{n},\frac{r_{2}}{n}\right)\right\}\right|\nonumber\\
\leqslant{}&q_{1}\left|F_{n}\left(\frac{r_{1}}{n},\frac{r_{2}}{n}\right)-H_{0}\left(\frac{r_{1}}{n},\frac{r_{2}}{n}\right)\right|+q_{1}^{2}\left|F_{n}^{2}\left(\frac{r_{1}}{n},\frac{r_{2}}{n}\right)-H_{0}^{2}\left(\frac{r_{1}}{n},\frac{r_{2}}{n}\right)\right|\leqslant \beta_{n},\nonumber
\end{align}
where $\beta_{n}=q_{1}(1+2q_{1})O(n^{-1})$ approaches $0$ as $n\rightarrow\infty$. Note that $(n^{-1}A_{n},n^{-1}B_{n})$ takes values in $\mathcal{S}_{n}$, and its almost sure limit, $(x^{*},y^{*})$, is in $\mathcal{S}$, and the function $(x,y)\mapsto q_{1}H_{0}(x,y)\{1-q_{1}H_{0}(x,y)\}$ is continuous throughout $\mathcal{S}$. By the first part of Lemma~\ref{lem:a.s.convergence}, we conclude that here too, \eqref{(1,1)-th_entry_limit} is true.

Via similar arguments applied to the remaining rows of \eqref{MDS_squared_entries}, we now conclude that 
\begin{align}\label{MDS_squared_convergence_to_Gamma}
\frac{1}{n-N}\sum_{i=N}^{n-1}\E\left[\Delta M_{i+1}\Delta M_{i+1}^{t}\big|\mathcal{F}_{i}\right]\convas\Gamma, \text{ where } \Gamma \text{ is as defined in \eqref{thm:main_5:matrix_defns}}.
\end{align} 
It is now easily verified (keeping in mind that $(n^{-1}A_{n},n^{-1}B_{n},n^{-1}C_{n})$, and hence its almost sure limit $(x^{*},y^{*},z^{*})$, must belong to the set $\mathcal{T}$ defined just before \eqref{G_defn}) that each principal minor of $\Gamma$ is non-negative: for instance, the principal minor obtained by deleting the third row and third column equals $x^{*}y^{*}(1-x^{*}-y^{*})$, while the determinant of $\Gamma$ itself equals $x^{*}y^{*}z^{*}(1-x^{*}-y^{*}-z^{*})$. Therefore, $\Gamma$ is a symmetric, positive semi-definite matrix, thus completing the verification of \eqref{MDS_quadratic_variation_Cesaro_conv}.

\textbf{Determination of the scalars $\rho$ and $\nu$, and the matrices $T$ and $\overline{T}$, under Scenario~\eqref{Scenario_1}:} From \eqref{gen_eigenvalues_Jacobian_G_scenario_1} and the definition of $\rho$ stated right after \eqref{similarity_eq_general}, we see that $\rho$ indeed equals the expression given by \eqref{thm:main_5:scalar_defns} in the set-up of Theorem~\ref{thm:main_5}. Thus, $\rho\leqslant 1$ \emph{always}. When $\lambda_{1}(\theta^{*})\neq\lambda_{2}(\theta^{*})$, simple computations show that $e_{1}=\left(-\beta^{*},\alpha^{*},0\right)^{t}$ and $e_{2}=(0,0,1)^{t}$ serve as eigenvectors for $J_{G}(\theta^{*})$ corresponding to $\lambda_{1}(\theta^{*})$, while $e_{3}=(-q_{1},q_{2},-1+q_{1})^{t}$ serves as an eigenvector for $J_{G}(\theta^{*})$ corresponding to $\lambda_{2}(\theta^{*})$. Thus, when $\lambda_{1}(\theta^{*})\neq\lambda_{2}(\theta^{*})$, each eigenvalue of $J_{G}(\theta^{*})$ is regular, so that $J_{G}(\theta^{*})$ is similar to a diagonal matrix:
\begin{equation}
T^{-1}J_{G}(\theta^{*})T=\begin{bmatrix}
-1 & 0 & 0\\
0 & -1 & 0\\
0 & 0 & -1+q_{1}\alpha^{*}-q_{2}\beta^{*}
\end{bmatrix}, \quad\text{with }T \text{ as defined in \eqref{thm:main_5:matrix_defns}}.\label{similarity_eq_J_{G}(theta^{*})}
\end{equation}  
This is true, in particular, when $\rho<1$, or, equivalently, $q_{1}\alpha^{*}>q_{2}\beta^{*}$, in which case $\rho=-\lambda_{2}(\theta^{*})=1-q_{1}\alpha^{*}+q_{2}\beta^{*}$.
From the definition of $\nu$, stated right before Theorem~\ref{thm:zhang_2.1}, we have $\nu=1$ whenever $\rho<1$.

Consider $\rho=1$. If $\lambda_{2}(\theta^{*})\neq\lambda_{1}(\theta^{*})$, which is equivalent to having $q_{1}\alpha^{*}\neq q_{2}\beta^{*}$, then, once again, each eigenvalue is regular and $J_{G}(\theta^{*})$ satisfies \eqref{similarity_eq_J_{G}(theta^{*})}. If $\lambda_{2}(\theta^{*})=\lambda_{1}(\theta^{*})$, or, equivalently, $q_{1}\alpha^{*}=q_{2}\beta^{*}$, and additionally, $\alpha^{*}=0$, which forces $\beta^{*}=0$, we have $J_{G}(\theta^{*})=-I_{3}$ (where $I_{d}$ indicates the $d\times d$ identity matrix). On the other hand, if $q_{1}\alpha^{*}=q_{2}\beta^{*}$ and $\alpha^{*}\neq 0$, which forces $\beta^{*}\neq 0$ as well, the only eigenvalue of $J_{G}(\theta^{*})$ is $-1$, with algebraic multiplicity $3$ and geometric multiplicity $2$. Standard results pertaining to Jordan canonical forms, along with an investigation of the eigenspace of $J_{G}(\theta^{*})$, corresponding to the eigenvalue $-1$, reveals that, for the non-singular $\overline{T}$ defined in \eqref{thm:main_5:matrix_defns}, we have
\begin{equation}\label{similarity_eq_Jordan_J_{G}(theta^{*})}
\overline{T}^{-1}J_{G}(\theta^{*})\overline{T}=\begin{bmatrix}
-1 & 1 & 0\\
0 & -1 & 0\\
0 & 0 & -1
\end{bmatrix}.
\end{equation}

\textbf{The proof of \eqref{thm:main_5:regime_1}:} Here, we are in the regime of $\rho=1/2$. When the samples are drawn with replacement, we have $\delta_{n,1}=\delta_{n,2}=\delta_{n,3}=0$ for each $n\geqslant N$, from \eqref{sa_1_appended}. Therefore, each of \eqref{series_conv_zhang_2.1_1} and \eqref{series_conv_zhang_2.1_2} is automatically true. When the samples are drawn without replacement, we have, from \eqref{sa_2_appended}, $\delta_{n}=(\delta_{n,1},\delta_{n,2},\delta_{n,3})^{t}$, with $\delta_{n,i}$, $i\in\{1,2,3\}$, as defined in \eqref{many_defns_without_replacement_identical_law} and \eqref{third_error_term_without_replacement_identical_law}. By Lemma~\ref{lem:hypergeometric_approx_sample_size_iid}, we have $||\delta_{n}||_{2}=O(n^{-1})$. This implies that 
\begin{equation}
\left|\left|\sum_{i=N}^{n-1}\delta_{i}\right|\right|_{2}\leqslant\sum_{i=N}^{n-1}\left|\left|\delta_{i}\right|\right|_{2}=O\left(\sum_{i=N}^{n-1}i^{-1}\right)=O(\log n)=o\left(\sqrt{\frac{n}{\log n}}\right),\label{partial_sum_delta_{i}_without_replacement_identical_law}
\end{equation}
thus ensuring that \eqref{series_conv_zhang_2.1_1} is true. Therefore, we may apply Theorem~\ref{thm:zhang_2.1} to this set-up. Since $\rho=1/2<1$, we have $\nu=1$, and \eqref{similarity_eq_J_{G}(theta^{*})} is true. We now carry out a computation that holds whenever \eqref{similarity_eq_J_{G}(theta^{*})} is true (in particular, when $\lambda_{1}(\theta^{*})\neq\lambda_{2}(\theta^{*})$), with $T$ and $\Gamma$ as defined in \eqref{thm:main_5:matrix_defns}:
\begin{align}\label{gen_computation_T^{t}Gamma T}
{}&\exp\left\{\left(J_{G}(\theta^{*})+I_{3}/2\right)u\right\}\Gamma\left[\exp\left\{\left(J_{G}(\theta^{*})+I_{3}/2\right)u\right\}\right]^{t}\nonumber\\
={}&T\begin{bmatrix}
e^{-u/2} & 0 & 0\\
0 & e^{-u/2} & 0\\
0 & 0 & e^{u(-1/2+q_{1}\alpha^{*}-q_{2}\beta^{*})}
\end{bmatrix}T^{-1}\Gamma (T^{-1})^{t}\begin{bmatrix}
e^{-u/2} & 0 & 0\\
0 & e^{-u/2} & 0\\
0 & 0 & e^{u(-1/2+q_{1}\alpha^{*}-q_{2}\beta^{*})}
\end{bmatrix}T^{t}\nonumber\\
={}&T\begin{bmatrix}
e^{-u}A_{1,1} & e^{-u}A_{1,2} & e^{u(-1+q_{1}\alpha^{*}-q_{2}\beta^{*})}A_{1,3}\\
e^{-u}A_{1,2} & e^{-u}A_{2,2} & e^{u(-1+q_{1}\alpha^{*}-q_{2}\beta^{*})}A_{2,3}\\
e^{u(-1+q_{1}\alpha^{*}-q_{2}\beta^{*})}A_{1,3} & e^{u(-1+q_{1}\alpha^{*}-q_{2}\beta^{*})}A_{2,3} & e^{u(-1+2q_{1}\alpha^{*}-2q_{2}\beta^{*})}A_{3,3}
\end{bmatrix}T^{t},
\end{align}
where $A_{1,1},\ldots,A_{3,3}$ are as defined in the statement of Theorem~\ref{thm:main_5}. In particular, when $\rho=1/2$, we have $q_{1}\alpha^{*}-q_{2}\beta^{*}=1/2$, so that, using \eqref{gen_computation_T^{t}Gamma T}, the expression for $\Sigma$, given by \eqref{sigma:zhang_2.1}, can be reduced to:
\begin{align}
\Sigma={}&\lim_{n\rightarrow\infty}\frac{1}{\log n}\int_{0}^{\log n}\exp\left\{\left(J_{G}(\theta^{*})+I_{3}/2\right)u\right\}\Gamma\left[\exp\left\{\left(J_{G}(\theta^{*})+I_{3}/2\right)u\right\}\right]^{t}du\nonumber\\
={}&T\left[\lim_{n\rightarrow\infty}\frac{1}{\log n}\int_{0}^{\log n}\begin{bmatrix}
e^{-u}A_{1,1} & e^{-u}A_{1,2} & e^{-u/2}A_{1,3}\\
e^{-u}A_{1,2} & e^{-u}A_{2,2} & e^{-u/2}A_{2,3}\\
e^{-u/2}A_{1,3} & e^{-u/2}A_{2,3} & A_{3,3}
\end{bmatrix}du\right]T^{t}=T\begin{bmatrix}
0 & 0 & 0\\
0 & 0 & 0\\
0 & 0 & A_{3,3}
\end{bmatrix}T^{t},\nonumber
\end{align}
where each $A_{i,j}$ is now written with $\kappa=1/2$. This, upon simplification, yields exactly the covariance matrix appearing in \eqref{thm:main_5:regime_1:conclusion}. The final conclusion, given by \eqref{thm:main_5:regime_1:conclusion}, now follows from \eqref{conclusion:zhang_2.1} of Theorem~\ref{thm:zhang_2.1}.
 
\textbf{The proof of \eqref{thm:main_5:regime_2}:} Here, we are in the regime of $0<\rho<1/2$. Since \eqref{MDS_squared_convergence_to_Gamma} holds, \eqref{series_conv_zhang_2.2_1} is true as well. When the samples are drawn with replacement, $\delta_{n,1}=\delta_{n,2}=\delta_{n,3}=0$ from \eqref{sa_1_appended}, so that \eqref{series_conv_zhang_2.2_2} is trivially satisfied. When the samples are drawn without replacement, as before, $||\delta_{n}||_{2}=(\delta_{n,1}^{2}+\delta_{n,2}^{2}+\delta_{n,3}^{2})^{1/2}=O(n^{-1})$ due to \eqref{sa_2_appended}, \eqref{many_defns_without_replacement_identical_law}, \eqref{third_error_term_without_replacement_identical_law} and Lemma~\ref{lem:hypergeometric_approx_sample_size_iid}. Similar to \eqref{partial_sum_delta_{i}_without_replacement_identical_law}, we have $||\sum_{i=N}^{n-1}\delta_{i}||_{2}=O(\log n)=o(n^{1-\rho-\epsilon})$ for any choice of $\epsilon\in(0,1-\rho)$, ensuring that \eqref{series_conv_zhang_2.2_2} is true. Thus, we may apply Theorem~\ref{thm:zhang_2.2} here. Since $\rho<1$, we have $\nu=1$, \eqref{similarity_eq_J_{G}(theta^{*})} is true and $\rho=-\lambda_{2}(\theta^{*})=1-q_{1}\alpha^{*}+q_{2}\beta^{*}$. By \eqref{conclusion:zhang_2.2}, we conclude about the existence of a finite, scalar-valued random variable $W$ such that \eqref{thm:main_5:regime_2:conclusion} is true. 

\textbf{The proof of \eqref{thm:main_5:regime_3}:} Here, we are in the regime of $\rho>1/2$, which, by \eqref{thm:main_5:scalar_defns}, is equivalent to having $q_{1}\alpha^{*}-q_{2}\beta^{*}<1/2$. When the samples are drawn with replacement, \eqref{series_conv_zhang_2.3} is automatically satisfied since $\delta_{n,1}=\delta_{n,2}=\delta_{n,3}=0$ from \eqref{sa_1_appended}. When the samples are drawn without replacement, similar to \eqref{partial_sum_delta_{i}_without_replacement_identical_law}, we have $||\sum_{i=N}^{n-1}\delta_{i}||_{2}=O(\log n)=o(\sqrt{n})$, due to \eqref{sa_2_appended}, \eqref{many_defns_without_replacement_identical_law}, \eqref{third_error_term_without_replacement_identical_law} and Lemma~\ref{lem:hypergeometric_approx_sample_size_iid}. Hence, \eqref{series_conv_zhang_2.3} is true, and we may apply Theorem~\ref{thm:zhang_2.3} here. 

Recall that, since \eqref{contraction_criterion_H_{0}} is true, we must have $q_{1}\alpha^{*}-q_{2}\beta^{*}>-1$. First, we consider the case of $q_{1}\alpha^{*}-q_{2}\beta^{*}\in(-1,1/2)\setminus\{0\}$, so that $\lambda_{1}(\theta^{*})\neq\lambda_{2}(\theta^{*})$ by \eqref{gen_eigenvalues_Jacobian_G_scenario_1}, and hence \eqref{similarity_eq_J_{G}(theta^{*})} remains true with $T$ as defined in \eqref{thm:main_5:matrix_defns}. Using \eqref{gen_computation_T^{t}Gamma T}, we may compute $\Sigma$ as defined in \eqref{sigma:zhang_2.3} as follows:
\begin{align}
\Sigma={}&\int_{0}^{\infty}\exp\left\{\left(J_{G}(\theta^{*})+I_{3}/2\right)u\right\}\Gamma\left[\exp\left\{\left(J_{G}(\theta^{*})+I_{3}/2\right)u\right\}\right]^{t}du\nonumber\\
={}&T\left(\int_{0}^{\infty}\begin{bmatrix}
e^{-u}A_{1,1} & e^{-u}A_{1,2} & e^{u(-1+q_{1}\alpha^{*}-q_{2}\beta^{*})}A_{1,3}\\
e^{-u}A_{1,2} & e^{-u}A_{2,2} & e^{u(-1+q_{1}\alpha^{*}-q_{2}\beta^{*})}A_{2,3}\\
e^{u(-1+q_{1}\alpha^{*}-q_{2}\beta^{*})}A_{1,3} & e^{u(-1+q_{1}\alpha^{*}-q_{2}\beta^{*})}A_{2,3} & e^{u(-1+2q_{1}\alpha^{*}-2q_{2}\beta^{*})}A_{3,3}
\end{bmatrix}du\right)T^{t}\nonumber\\
={}&T\begin{bmatrix}
A_{1,1} & A_{1,2} & (1-q_{1}\alpha^{*}+q_{2}\beta^{*})^{-1}A_{1,3}\\
A_{1,2} & A_{2,2} & (1-q_{1}\alpha^{*}+q_{2}\beta^{*})^{-1}A_{2,3}\\
(1-q_{1}\alpha^{*}+q_{2}\beta^{*})^{-1}A_{1,3} & (1-q_{1}\alpha^{*}+q_{2}\beta^{*})^{-1}A_{2,3} & (1-2q_{1}\alpha^{*}+2q_{2}\beta^{*})^{-1}A_{3,3}
\end{bmatrix}T^{t},\nonumber
\end{align}
so that the final conclusion, i.e.\eqref{thm:main_5:regime_3.1:conclusion} now follows from \eqref{conclusion:zhang_2.3} of Theorem~\ref{thm:zhang_2.3}. 

When $q_{1}\alpha^{*}-q_{2}\beta^{*}=0$ (which yields $\rho=1$) and $\alpha^{*}=0$ (so that $\beta^{*}=0$ as well), we have $J_{G}(\theta^{*})=-I_{3}$ from \eqref{Jacobian_G_scenario_1}, so that \eqref{sigma:zhang_2.3} immediately yields $\Sigma=\Gamma$ in this case, and \eqref{thm:main_5:regime_3.2:conclusion} follows. 

When $q_{1}\alpha^{*}-q_{2}\beta^{*}=0$ but $\alpha^{*}\neq 0$ (which forces $\beta^{*}\neq 0$ as well), \eqref{similarity_eq_Jordan_J_{G}(theta^{*})} is satisfied. A computation similar to \eqref{gen_computation_T^{t}Gamma T} yields the following form of $\Sigma$ defined in \eqref{sigma:zhang_2.3}:
\begin{align}
{}&\Sigma=\int_{0}^{\infty}\exp\left\{\left(J_{G}(\theta^{*})+I_{3}/2\right)u\right\}\Gamma\left[\exp\left\{\left(J_{G}(\theta^{*})+I_{3}/2\right)u\right\}\right]^{t}du\nonumber\\ 
={}&\overline{T}\left(\int_{0}^{\infty}\begin{bmatrix}
e^{-u/2} & u e^{-u/2} & 0\\
0 & e^{-u/2} & 0\\
0 & 0 & e^{-u/2}
\end{bmatrix}\overline{T}^{-1}\Gamma\left(\overline{T}^{-1}\right)^{t}\begin{bmatrix}
e^{-u/2} & 0 & 0\\
u e^{-u/2} & e^{-u/2} & 0\\
0 & 0 & e^{-u/2}
\end{bmatrix}du\right)\overline{T}^{t}\nonumber\\
={}&\overline{T}\left(\int_{0}^{\infty}\begin{bmatrix}
e^{-u}B_{1,1}+2u e^{-u}B_{1,2}+u^{2}e^{-u}B_{2,2} & e^{-u}B_{1,2}+ue^{-u}B_{2,2} & e^{-u}B_{1,3}+u e^{-u}B_{2,3}\\
e^{-u}B_{1,2}+ue^{-u}B_{2,2} & e^{-u}B_{2,2} & e^{-u}B_{2,3}\\
e^{-u}B_{1,3}+u e^{-u}B_{2,3} & e^{-u}B_{2,3} & e^{-u}B_{3,3}
\end{bmatrix}du\right)\overline{T}^{t}\nonumber\\
={}&\overline{T}\begin{bmatrix}
B_{1,1}+2B_{1,2}+2B_{2,2} & B_{1,2}+B_{2,2} & B_{1,3}+B_{2,3}\\
B_{1,2}+B_{2,2} & B_{2,2} & B_{2,3}\\
B_{1,3}+B_{2,3} & B_{2,3} & B_{3,3}
\end{bmatrix}\overline{T}^{t},\nonumber
\end{align}
where $B_{i,j}$, for $(i,j)\in\{(1,1),(1,2),(1,3),(2,2),(2,3),(3,3)\}$, are as given in the statement of Theorem~\ref{thm:main_5}. The final conclusion, i.e.\ \eqref{thm:main_5:regime_3.3:conclusion}, now follows from \eqref{conclusion:zhang_2.3} of Theorem~\ref{thm:zhang_2.3}.
\end{proof}

\begin{proof}[Proof of Theorem~\ref{thm:main_6}]
In the regime described in \eqref{thm:main_6:regime_1} (respectively, \eqref{thm:main_6:regime_2}), \eqref{thm:C_{n}_a.s.:g_C^{2}} implies $g\in\mathcal{C}^{(2)}(\mathcal{S})$, and \eqref{thm:main_6:regime_1:series_criteria} (respectively, \eqref{thm:main_6:regime_2:series_criteria}), via Lemma~\ref{lem:stronger_convergence_criteria_zhang}, implies \eqref{thm:main_2:C^{2}_series_convergence}, so that \eqref{thm:main_2:C^{2}} is true. In the regime described in \eqref{thm:main_6:regime_3}, either \eqref{thm:C_{n}_a.s.:g_C^{1}} implies $g\in\mathcal{C}^{(1)}(\mathcal{S})$ and \eqref{thm:main_6:regime_3.1:series_criteria}, via Lemma~\ref{lem:stronger_convergence_criteria_zhang}, implies \eqref{thm:main_2:C^{1}_series_convergence}, so that \eqref{thm:main_2:C^{1}} is true, or, similar to the previous two regimes, \eqref{thm:C_{n}_a.s.:g_C^{2}} implies $g\in\mathcal{C}^{(2)}(\mathcal{S})$ and \eqref{thm:main_6:regime_3.2:series_criteria}, via Lemma~\ref{lem:stronger_convergence_criteria_zhang}, implies \eqref{thm:main_2:C^{2}_series_convergence}, so that \eqref{thm:main_2:C^{2}} is true. Since we are in Scenario~\eqref{Scenario_2}, one of \eqref{thm:main_2:C^{1}} and \eqref{thm:main_2:C^{2}} is always true, and \eqref{contraction_criterion_g} holds, we conclude, via Theorem~\ref{thm:main_2_special}, that the function $\hat{h}$, defined in \eqref{hat{H}_hat{h}_defn}, has a unique root, $(x^{*},y^{*})$, in $\mathcal{S}$, and that $\{(n^{-1}A_{n},n^{-1}B_{n})\}$ converges almost surely to $(x^{*},y^{*})$. This, along with the assumption that one of \eqref{thm:C_{n}_a.s.:g_C^{1}} and \eqref{thm:C_{n}_a.s.:g_C^{2}} is true, allows us to conclude, by Theorem~\ref{thm:a.s._convergence_C_{n}}, that $\{n^{-1}C_{n}\}$ convegres almost surely to $z^{*}=(1-q_{1})x^{*}/q_{1}$. Therefore, $\{(n^{-1}A_{n},n^{-1}B_{n},n^{-1}C_{n})\}$ converges almost surely to $\theta^{*}=(x^{*},y^{*},z^{*})$.  

When the samples are drawn with replacement, we have, from \eqref{sa_2} and the second row of \eqref{f_{n}_defn_C_{n}}:
\begin{align}
\begin{bmatrix}
(n+1)^{-1}A_{n+1}\\
(n+1)^{-1}B_{n+1}\\
(n+1)^{-1}C_{n+1}
\end{bmatrix}={}&\begin{bmatrix}
n^{-1}A_{n}\\
n^{-1}B_{n}\\
n^{-1}C_{n}
\end{bmatrix}+\frac{1}{n+1}\left\{\begin{bmatrix}
\Delta M_{n+1,1}\\
\Delta M_{n+1,2}\\
\Delta M_{n+1,3}
\end{bmatrix}+\begin{bmatrix}
\delta_{n,1}\\
\delta_{n,2}\\
\delta_{n,3}
\end{bmatrix}+\hat{G}\left(\begin{bmatrix}
n^{-1}A_{n}\\
n^{-1}B_{n}\\
n^{-1}C_{n}
\end{bmatrix}\right)\right\},\label{sa_3_appended}
\end{align}
where $\Delta M_{n+1,1}$, $\Delta M_{n+1,2}$, $\delta_{n,1}$ and $\delta_{n,2}$ are as defined in \eqref{many_defns_with_replacement_scenario_2}, the function $\hat{G}:\mathcal{T}\rightarrow\mathcal{T}$is given by
\begin{equation}
\hat{G}(x,y,z)=\begin{bmatrix}
q_{1}g(x,y)-x\\
q_{2}\{1-g(x,y)\}-y\\
(1-q_{1})g(x,y)-z
\end{bmatrix}, \text{ where }\mathcal{T}\text{ is as defined just before \eqref{G_defn},}\label{hat{G}_defn}
\end{equation}
with $g$ as defined in \eqref{g_defn}, and we set, for each $n\geqslant N$, with $H_{n}$ as defined in \eqref{H_{n}_defn},
\begin{equation}\label{third_error_term_with_replacement_varying_law}
\Delta M_{n+1,3}=C_{n+1}-C_{n}-(1-q_{1})H_{n}\left(\frac{A_{n}}{n},\frac{B_{n}}{n}\right) \text{ and } \delta_{n,3}=(1-q_{1})\left\{H_{n}\left(\frac{A_{n}}{n},\frac{B_{n}}{n}\right)-g\left(\frac{A_{n}}{n},\frac{B_{n}}{n}\right)\right\}.
\end{equation}
When the samples are drawn without replacement, our stochastic process can, once again, be represented by \eqref{sa_3_appended}, but with $\Delta M_{n+1,1}$, $\Delta M_{n+1,2}$, $\delta_{n,1}$ and $\delta_{n,2}$ as defined in \eqref{many_defns_without_replacement_varying_law}, and we set, with $E_{n}$ as in \eqref{E_{n}_defn}:
\begin{equation}\label{third_error_term_without_replacement_varying_law}
\Delta M_{n+1,3}=C_{n+1}-C_{n}-(1-q_{1})E_{n}\left(\frac{A_{n}}{n},\frac{B_{n}}{n}\right) \text{ and } \delta_{n,3}=(1-q_{1})\left\{E_{n}\left(\frac{A_{n}}{n},\frac{B_{n}}{n}\right)-g\left(\frac{A_{n}}{n},\frac{B_{n}}{n}\right)\right\}.
\end{equation}
From the discussion in the first paragraph, it is evident that $\theta^{*}$ is the unique root of $\hat{G}$ in $\mathcal{T}$.

\textbf{Verification of Assumption~\eqref{gen_assump_dist_1}:} Since one of \eqref{thm:C_{n}_a.s.:g_C^{1}} and \eqref{thm:C_{n}_a.s.:g_C^{2}} is always assumed to hold in the set-up of Theorem~\ref{thm:main_6}, $g\in\mathcal{C}^{(1)}(\mathcal{S})$, so that, by \eqref{hat{G}_defn}, $\hat{G}\in\mathcal{C}^{(1)}(\mathcal{T})$, and its Jacobian, $J_{\hat{G}}(x,y,z)$, is well-defined for all $(x,y,z)\in\mathcal{T}$. The expression for $J_{\hat{G}}(x,y,z)$ is the same as that of $J_{G}(x,y,z)$ in \eqref{Jacobian_G_scenario_1}, with $\partial H_{0}/\partial x$ and $\partial H_{0}/\partial y$ replaced by $\partial g/\partial x$ and $\partial g/\partial y$ respectively. Similar to \eqref{gen_eigenvalues_Jacobian_G_scenario_1}, the eigenvalues of $J_{\hat{G}}(\theta^{*})$ are given by
$\lambda_{1}(\theta^{*})=-1$ and $\lambda_{2}(\theta^{*})=-1+q_{1}\alpha^{*}-q_{2}\beta^{*}$, with algebraic multiplicities $2$ and $1$ respectively (where $\alpha^{*}$ and $\beta^{*}$ are as defined right after \eqref{thm:main_6:regime_1:series_criteria}). Since \eqref{contraction_criterion_g} is true, $\lambda_{2}(\theta^{*})\leqslant-1+q_{1}|\alpha^{*}|+q_{2}|\beta^{*}|<0$, so that each of $\lambda_{1}(\theta^{*})$ and $\lambda_{2}(\theta^{*})$ is strictly negative, thus validating Assumption~\eqref{gen_assump_dist_1}.

\textbf{Verification of Assumption~\eqref{gen_assump_dist_2}:} Assumption~\eqref{gen_assump_dist_2} is needed only when we are in the regimes given by \eqref{thm:main_6:regime_1} and \eqref{thm:main_6:regime_2}. In both of these regimes, \eqref{thm:C_{n}_a.s.:g_C^{2}} is assumed to hold, so that $g\in\mathcal{C}^{(2)}(\mathcal{S})$, and Assumption~\eqref{gen_assump_dist_2} can be verified in exactly the same way as in the proof of Theorem~\ref{thm:main_5}.

\textbf{Verification of Assumption~\eqref{gen_assump_dist_3}:} We verify this when we are in the regime given by either \eqref{thm:main_6:regime_1} or \eqref{thm:main_6:regime_3}. Letting $\Delta M_{n+1}=(\Delta M_{n+1,1},\Delta M_{n+1,2},\Delta M_{n+1,3})^{t}$ for each $n\geqslant N$ (with the coordinates of $\Delta M_{n+1}$ defined as in \eqref{many_defns_with_replacement_scenario_2} and \eqref{third_error_term_with_replacement_varying_law} when sampling with replacement, or as in \eqref{many_defns_without_replacement_varying_law} and \eqref{third_error_term_without_replacement_varying_law} when sampling without replacement), we observe that the inequality in \eqref{intermediate_15} remains true here. Moreover, as before, we have $|\Delta M_{i+1,j}|\leqslant 1$ for each $j\in\{1,2,3\}$, argued using either \eqref{H_{n}_bounds} and the second row of \eqref{f_{n}_defn_C_{n}}, or \eqref{E_{n}_bounds} and the fourth row of \eqref{f_{n}_defn_C_{n}}. Applying these, the final expression in \eqref{intermediate_15} is bounded above by $9\epsilon^{-2}n^{-1}$, ensuring that \eqref{Lindeberg_cond} holds in this set-up.

To verify \eqref{MDS_quadratic_variation_Cesaro_conv}, we observe that \eqref{MDS_squared_entries} remains true, with
\begin{enumerate*}
\item $f_{n}\equiv H_{n}$ on $\mathcal{S}$, for each $n\geqslant N$, when the samples are drawn with replacement, and 
\item $f_{n}\equiv E_{n}$ on $\mathcal{S}_{n}$, for each $n\geqslant N$, when they are drawn without replacement.
\end{enumerate*}
We focus on the first row of \eqref{MDS_squared_entries}, and consider the following scenarios.

When we are in the regime given by \eqref{thm:main_6:regime_1}, we have $g\in\mathcal{C}^{(2)}(\mathcal{S})$ since \eqref{thm:C_{n}_a.s.:g_C^{2}} is true. When the samples are drawn with replacement, by \eqref{Bernstein_error_C^{2}_H_{n}} of Lemma~\ref{lem:Bernstein_approximation_errors_C^{2}}, the bounds in \eqref{H_{n}_bounds}, and the fact (mentioned right before \eqref{g_defn}) that $0\leqslant g(\cdot,\cdot)\leqslant 1$ on $\mathcal{S}$, we have, for all $(x,y)\in\mathcal{S}$,
\begin{align}
{}&|q_{1}H_{n}(x,y)\{1-q_{1}H_{n}(x,y)\}-q_{1}g(x,y)\{1-q_{1}g(x,y)\}|\nonumber\\
\leqslant{}&q_{1}|H_{n}(x,y)-g(x,y)|+q_{1}^{2}|H_{n}^{2}(x,y)-g^{2}(x,y)|\leqslant \beta_{n}, \text{ where }\beta_{n}=q_{1}(1+2q_{1})C\E[K_{n}^{-1}].\label{(1,1)-th_entry_convergence_rate_with_replacement_varying_law}
\end{align}
Since \eqref{thm:C_{n}_a.s.:g_C^{2}} is true, $\beta_{n}\rightarrow0$, so that $q_{1}H_{n}(\cdot,\cdot)\{1-q_{1}H_{n}(\cdot,\cdot)\}$ converges uniformly to $q_{1}g(\cdot,\cdot)\{1-q_{1}g(\cdot,\cdot)\}$ on $\mathcal{S}$. Each $H_{n}$, defined in \eqref{H_{n}_defn}, being a polynomial, we have $q_{1}H_{n}(\cdot,\cdot)\{1-q_{1}H_{n}(\cdot,\cdot)\}\in\mathcal{C}^{(0)}(\mathcal{S})$. By Lemma~\ref{lem:a.s.convergence}, and since $(x^{*},y^{*})$ is a fixed point of $\hat{H}$ defined in \eqref{hat{H}_hat{h}_defn}, we conclude that
\begin{align}
\frac{1}{n-N}\sum_{i=N}^{n-1}\E\left[(1,1)\text{-th entry of }\Delta M_{i+1}\Delta M_{i+1}^{t}\big|\mathcal{F}_{i}\right]\convas q_{1}g(x^{*},y^{*})\left\{1-q_{1}g(x^{*},y^{*})\right\}=x^{*}(1-x^{*}).\label{(1,1)-th_entry_limit_varying_law}
\end{align}
When we are in the regime given by \eqref{thm:main_6:regime_1} and the samples are drawn without replacement, by \eqref{Bernstein_error_C^{2}_E_{n}} of Lemma~\ref{lem:Bernstein_approximation_errors_C^{2}}, the bounds in \eqref{E_{n}_bounds}, and $0\leqslant g(\cdot,\cdot)\leqslant 1$ on $\mathcal{S}$, we have, for all $(r_{1}/n,r_{2}/n)\in\mathcal{S}_{n}$,
\begin{align}
{}&|q_{1}E_{n}(r_{1}/n,r_{2}/n)\{1-q_{1}E_{n}(r_{1}/n,r_{2}/n)\}-q_{1}g(r_{1}/n,r_{2}/n)\{1-q_{1}g(r_{1}/n,r_{2}/n)\}|\leqslant \beta_{n},\nonumber
\end{align}
with $\beta_{n}$ as in \eqref{(1,1)-th_entry_convergence_rate_with_replacement_varying_law}. Since \eqref{thm:C_{n}_a.s.:g_C^{2}} is true, we have 
\begin{enumerate*}
\item $\beta_{n}\rightarrow0$, and 
\item $g\in\mathcal{C}^{(2)}(\mathcal{S})$, which implies that $q_{1}g(\cdot,\cdot)\{1-q_{1}g(\cdot,\cdot)\}\in\mathcal{C}^{(0)}(\mathcal{S})$. 
\end{enumerate*}
Lemma~\ref{lem:a.s.convergence}, once again, yields the conclusion given in \eqref{(1,1)-th_entry_limit_varying_law}.

Consider the regime given by \eqref{thm:main_6:regime_3}. When \eqref{thm:C_{n}_a.s.:g_C^{2}} is true, that \eqref{(1,1)-th_entry_limit_varying_law} holds is argued exactly as above. Suppose, now, that \eqref{thm:C_{n}_a.s.:g_C^{1}} is true, instead, so that $g\in\mathcal{C}^{(1)}(\mathcal{S})$. When the samples are drawn with replacement, we have, by \eqref{Bernstein_error_C^{1}_H_{n}} of Lemma~\ref{lem:Bernstein_approximation_errors_C^{1}}, the bounds in \eqref{H_{n}_bounds}, and $0\leqslant g(\cdot,\cdot)\leqslant 1$ on $\mathcal{S}$, for all $(x,y)\in\mathcal{S}$,
\begin{multline}
|q_{1}H_{n}(x,y)\{1-q_{1}H_{n}(x,y)\}-q_{1}g(x,y)\{1-q_{1}g(x,y)\}|\leqslant \beta_{n}, \text{ where }\\ \beta_{n}=q_{1}(1+2q_{1})C\min\{\omega(\nabla g;\sqrt{\E[K_{n}^{-1}]})\sqrt{\E[K_{n}^{-1}]},\omega(\nabla g;\E[K_{n}^{-1}](\E[K_{n}^{-1/2}])^{-1})\E[K_{n}^{-1/2}]\},\label{(1,1)-th_entry_convergence_rate_with_replacement_varying_law_C^{1}}
\end{multline}
and $\beta_{n}\rightarrow 0$ since \eqref{thm:C_{n}_a.s.:g_C^{1}} is true, implying the uniform convergence of $q_{1}H_{n}(\cdot,\cdot)\{1-q_{1}H_{n}(\cdot,\cdot)\}$ to $q_{1}g(\cdot,\cdot)\{1-q_{1}g(\cdot,\cdot)\}$ on $\mathcal{S}$. As mentioned above, each $q_{1}H_{n}(\cdot,\cdot)\{1-q_{1}H_{n}(\cdot,\cdot)\}\in\mathcal{C}^{(0)}(\mathcal{S})$ since each $H_{n}$ is a polynomial. By Lemma~\ref{lem:a.s.convergence}, we conclude that \eqref{(1,1)-th_entry_limit_varying_law} holds. When the samples are drawn without replacement, we have, by \eqref{Bernstein_error_C^{1}_E_{n}} of Lemma~\ref{lem:Bernstein_approximation_errors_C^{1}}, the bounds in \eqref{E_{n}_bounds}, and $0\leqslant g(\cdot,\cdot)\leqslant 1$ on $\mathcal{S}$, for all $(r_{1}/n,r_{2}/n)\in\mathcal{S}_{n}$,
\begin{equation}
|q_{1}E_{n}(r_{1}/n,r_{2}/n)\{1-q_{1}E_{n}(r_{1}/n,r_{2}/n)\}-q_{1}g(r_{1}/n,r_{2}/n)\{1-q_{1}g(r_{1}/n,r_{2}/n)\}|\leqslant \beta_{n},\nonumber
\end{equation}
with $\beta_{n}$ as defined in \eqref{(1,1)-th_entry_convergence_rate_with_replacement_varying_law_C^{1}}. Since \eqref{thm:C_{n}_a.s.:g_C^{1}} is true, we have 
\begin{enumerate*}
\item $\beta_{n}\rightarrow0$, and 
\item $g\in\mathcal{C}^{(1)}(\mathcal{S})$, which implies that $q_{1}g(\cdot,\cdot)\{1-q_{1}g(\cdot,\cdot)\}\in\mathcal{C}^{(0)}(\mathcal{S})$. 
\end{enumerate*}
Lemma~\ref{lem:a.s.convergence}, once again, yields the conclusion given by \eqref{(1,1)-th_entry_limit_varying_law}.

Arguing similarly for the remaining rows of \eqref{MDS_squared_entries}, we conclude that \eqref{MDS_squared_convergence_to_Gamma} is satisfied in the regimes given by \eqref{thm:main_6:regime_1} and \eqref{thm:main_6:regime_3}, but with the matrix $\Gamma$ as defined in the statement of Theorem~\ref{thm:main_6}. This verifies \eqref{MDS_quadratic_variation_Cesaro_conv}.

\textbf{The proof of \eqref{thm:main_6:regime_1}:} We let $\delta_{n}=(\delta_{n,1},\delta_{n,2},\delta_{n,3})^{t}$ for $n\geqslant N$, where $\delta_{n,j}$, for $j\in\{1,2,3\}$, are as defined in \eqref{many_defns_with_replacement_scenario_2} and \eqref{third_error_term_with_replacement_varying_law} when the samples are drawn with replacement, and as defined in \eqref{many_defns_without_replacement_varying_law} and \eqref{third_error_term_without_replacement_varying_law} when the samples are drawn without replacement. Since \eqref{thm:C_{n}_a.s.:g_C^{2}} has been assumed, $g\in\mathcal{C}^{(2)}(\mathcal{S})$ in this regime. 

Suppose the first criterion stated in \eqref{thm:main_6:regime_1:series_criteria} is true. We make use of either \eqref{Bernstein_error_C^{2}_H_{n}} or \eqref{Bernstein_error_C^{2}_E_{n}} of Lemma~\ref{lem:Bernstein_approximation_errors_C^{2}}, according to whether the samples are drawn with replacement or without, to obtain:
\begin{equation}
\left|\left|\sum_{i=N}^{n}\delta_{i}\right|\right|_{2}\leqslant \sqrt{q_{1}^{2}+q_{2}^{2}+(1-q_{1})^{2}}C\sum_{i=N}^{n}\E\left[K_{i}^{-1}\right]=o\left(\sqrt{\frac{n}{\log n}}\right),\label{delta_{i}_partial_sum_C^{2}}
\end{equation} 
which shows that \eqref{series_conv_zhang_2.1_1} is true. Likewise, when the second criterion stated in \eqref{thm:main_6:regime_1:series_criteria} is true, we obtain:
\begin{equation}
\sum_{i=N}^{n}(i+1)^{-1/2}\left|\left|\delta_{i}\right|\right|_{2}\leqslant \sqrt{q_{1}^{2}+q_{2}^{2}+(1-q_{1})^{2}}C\sum_{i=N}^{n}(i+1)^{-1/2}\E\left[K_{i}^{-1}\right]=o\left(\sqrt{\log n}\right),\nonumber
\end{equation}
which shows that \eqref{series_conv_zhang_2.1_2} is true. Thus, Theorem~\ref{thm:zhang_2.1} is applicable, and the conclusion of \eqref{thm:main_6:regime_1} now follows exactly as the conclusion of \eqref{thm:main_5:regime_1}, in the proof of Theorem~\ref{thm:main_5}, did.

\textbf{The proof of \eqref{thm:main_6:regime_2}:} Once again, \eqref{thm:C_{n}_a.s.:g_C^{2}} has been assumed, so that $g\in\mathcal{C}^{(2)}(\mathcal{S})$ in this regime. Using either \eqref{Bernstein_error_C^{2}_H_{n}} or \eqref{Bernstein_error_C^{2}_E_{n}} of Lemma~\ref{lem:Bernstein_approximation_errors_C^{2}}, we deduce, similar to \eqref{delta_{i}_partial_sum_C^{2}}, that $||\sum_{i=N}^{n}\delta_{i}||_{2}=O(\sum_{i=N}^{n}\E[K_{i}^{-1}])$, which is $o(n^{1-\rho-\epsilon})$, for some $0<\epsilon<1-\rho$, due to \eqref{thm:main_6:regime_2:series_criteria}, thus validating \eqref{series_conv_zhang_2.2_2}. 

To verify that \eqref{series_conv_zhang_2.2_1} is true, we focus on \eqref{MDS_squared_entries}, with $f_{n}\equiv H_{n}$ when sampling with replacement, and $f_{n}\equiv E_{n}$ when sampling without replacement. Making use of either \eqref{H_{n}_bounds} or \eqref{E_{n}_bounds}, we see that the absolute value of each entry of $\E[\Delta M_{n+1}\Delta M_{n+1}^{t}|\mathcal{F}_{n}]$ is bounded above by $1$ (in fact, the bounds can be refined much further), so that \eqref{series_conv_zhang_2.2_1} is obviously satisfied. We may, now, apply Theorem~\ref{thm:zhang_2.2}, and the conclusion of \eqref{thm:main_6:regime_2} now follows exactly as the conclusion of \eqref{thm:main_5:regime_2}, in the proof of Theorem~\ref{thm:main_5}, did.

\textbf{The proof of \eqref{thm:main_6:regime_3}:} When \eqref{thm:C_{n}_a.s.:g_C^{1}} is assumed to hold, $g\in\mathcal{C}^{(1)}(\mathcal{S})$, so that by \eqref{Bernstein_error_C^{1}_H_{n}} of Lemma~\ref{lem:Bernstein_approximation_errors_C^{1}} when the samples are drawn with replacement, or by \eqref{Bernstein_error_C^{1}_E_{n}} of Lemma~\ref{lem:Bernstein_approximation_errors_C^{1}} when the samples are drawn without replacement, we conclude that 
\begin{equation}
\left|\left|\sum_{i=N}^{n}\delta_{i}\right|\right|_{2}\leqslant \sqrt{q_{1}^{2}+q_{2}^{2}+(1-q_{1})^{2}}C\sum_{i=N}^{n}\min\left\{\omega\left(\nabla g;\sqrt{\E\left[K_{i}^{-1}\right]}\right)\sqrt{\E\left[K_{i}^{-1}\right]},\omega\left(\nabla g;\frac{\E\left[K_{i}^{-1}\right]}{\E\left[K_{i}^{-1/2}\right]}\right)\E\left[K_{i}^{-1/2}\right]\right\},\nonumber
\end{equation}
which is $o(\sqrt{n})$ because of \eqref{thm:main_6:regime_3.1:series_criteria}. When \eqref{thm:C_{n}_a.s.:g_C^{2}} is assumed to hold, $g\in\mathcal{C}^{(2)}(\mathcal{S})$, so that we deduce, similar to \eqref{delta_{i}_partial_sum_C^{2}}, that $||\sum_{i=N}^{n}\delta_{i}||_{2}=O(\sum_{i=N}^{n}\E[K_{i}^{-1}])=o(\sqrt{n})$ because of \eqref{thm:main_6:regime_3.2:series_criteria}. Thus, either way, \eqref{series_conv_zhang_2.3} holds, and we may now apply Theorem~\ref{thm:zhang_2.3}. The conclusions of \eqref{thm:main_6:regime_3} now follow exactly as the conclusions of \eqref{thm:main_5:regime_3}, in the proof of Theorem~\ref{thm:main_5}, did.
\end{proof}

\begin{proof}[Proof of Proposition~\ref{prop:K_{n}_distributions}]
Throughout the proof of Proposition~\ref{prop:K_{n}_distributions}, we assume $g\in\mathcal{C}^{(2)}(\mathcal{S})$ and that $g$ satisfies \eqref{contraction_criterion_g}. We begin by arguing that it suffices to show that the first criterion of \eqref{thm:main_6:regime_1:series_criteria} is satisfied by $\{K_{n}:n\geqslant N\}$ considered in Proposition~\ref{prop:K_{n}_distributions}. In the regime described in \eqref{thm:main_6:regime_2}, we have $\rho<1/2$, so that choosing $0<\epsilon<(1-\rho)-1/2$ and assuming \eqref{thm:main_6:regime_1:series_criteria} is true, yield $\sum_{i=N}^{n}\E[K_{i}^{-1}]=o(\sqrt{n/\log n})=o(\sqrt{n})=o(n^{1-\rho-\epsilon})$, establishing \eqref{thm:main_6:regime_2:series_criteria}. Since $\sqrt{n/\log n}=o(\sqrt{n})$, it is even more straightforward to see that \eqref{thm:main_6:regime_1:series_criteria} implies \eqref{thm:main_6:regime_3.2:series_criteria}, in the regime of \eqref{thm:main_6:regime_3}. Finally, the first criterion of \eqref{thm:main_6:regime_1:series_criteria} implies \eqref{thm:main_2:C^{2}_series_convergence} by Lemma~\ref{lem:stronger_convergence_criteria_zhang}, thus ensuring \eqref{thm:main_2:C^{2}} is true. Thus, making sure that the first criterion of \eqref{thm:main_6:regime_1:series_criteria}, as well as the convergence criterion stated in \eqref{thm:C_{n}_a.s.:g_C^{2}}, is satisfied, is enough to ensure that the conclusions drawn in each of Theorem~\ref{thm:main_2_special}, Theorem~\ref{thm:a.s._convergence_C_{n}} and Theorem~\ref{thm:main_6} remain true. 

When $K_{n}$ is discrete uniform with support $[n]$, we have $\E[K_{n}^{-1}]=1/n\sum_{i\in[n]}1/i=O(\log n/n)\rightarrow 0$ as $n\rightarrow\infty$, establishing the convergence criterion of \eqref{thm:C_{n}_a.s.:g_C^{2}}. Moreover, this estimate yields $\sum_{i=N}^{n}\E[K_{i}^{-1}]=\sum_{i=N}^{n}O(i^{-1}\log i)=O((\log n)^{2})=o(\sqrt{n/\log n})$ as $n\rightarrow\infty$, proving that the first criterion of \eqref{thm:main_6:regime_1:series_criteria} holds. 

Suppose $K_{n}=\min\{n,G_{n}\}$, where $G_{n}$ follows Geometric$(p_{n})$ with probability of success $p_{n}=c n^{-\alpha}$ for some $\alpha>1/2$ (in other words, $G_{n}$ equals the total number of trials required to achieve the first success, wherein each trial, independent of all else, results in success with probability $p_{n}$ and in failure with probability $(1-p_{n})$. This yields, using the inequality $1-x\leqslant e^{-x}$ for all $x\geqslant 0$: 
\begin{align}
{}&\E[K_{n}^{-1}]=\frac{p_{n}}{1-p_{n}}\sum_{k=1}^{n-1}\frac{(1-p_{n})^{k}}{k}+\frac{1}{n}\frac{(1-p_{n})^{n}}{1-p_{n}}\leqslant\frac{c}{n^{\alpha}-c}\left[e^{-c n^{-\alpha}}+\sum_{k=2}^{n-1}\int_{k-1}^{k}\frac{e^{-c k n^{-\alpha}}}{k}dx\right]+\frac{e^{-c n^{1-\alpha}}}{n-c n^{1-\alpha}}\nonumber\\
\leqslant{}&\frac{c}{n^{\alpha}-c}\left[1+\int_{1}^{n-1}\frac{e^{-c x n^{-\alpha}}}{x}dx\right]+\frac{1}{n-c n^{1-\alpha}}\leqslant\frac{c}{n^{\alpha}-c}\left[1+\int_{1}^{n-1}\frac{1}{x}dx\right]+\frac{1}{n-c n^{1-\alpha}}=\Theta\left(\frac{\log n}{n^{\alpha}}\right),\nonumber
\end{align}
so that $\E[K_{n}^{-1}]\rightarrow0$ as $n\rightarrow\infty$, ensuring that \eqref{thm:C_{n}_a.s.:g_C^{2}} holds. For $N$ sufficiently large (say, $N>1+e^{1/\alpha}$), 
\begin{align}
\sum_{i=N}^{n}\E[K_{i}^{-1}]={}&\Theta\left(\sum_{i=N}^{n}\frac{\log i}{i^{\alpha}}\right)=\Theta\left(\sum_{i=N}^{n}\int_{i-1}^{i}\frac{\log i}{i^{\alpha}}dx\right)=O\left(\int_{N-1}^{n}\frac{\log x}{x^{\alpha}}dx\right),\nonumber
\end{align}
which is $O(n^{1-\alpha}\log n)$ for $\alpha\in (1/2,\infty)\setminus\{1\}$, and it is $O((\log n)^{2})$ for $\alpha=1$, and in either case, we have $\sum_{i=N}^{n}\E[K_{i}^{-1}]=o(\sqrt{n/\log n})$, verifying the first criterion of \eqref{thm:main_6:regime_1:series_criteria}.

When $K_{n}=1+T_{n-1}$, where $T_{n-1}$ follows Binomial$(n-1,p_{n})$, with $p_{n}=c n^{-\alpha}$ for some $0<\alpha<1/2$,
\begin{align}
\E[K_{n}^{-1}]={}&\sum_{k=0}^{n-1}\frac{1}{k+1}{n-1\choose k}p_{n}^{k}(1-p_{n})^{n-1-k}=\frac{1}{np_{n}}\left[1-(1-p_{n})^{n}\right]=\Theta\left(n^{-1+\alpha}\right),\nonumber
\end{align}
so that the convergence criterion of \eqref{thm:C_{n}_a.s.:g_C^{2}} is satisfied, and moreover, $\sum_{i=N}^{n}\E[K_{i}^{-1}]=\Theta(\sum_{i=N}^{n}i^{-1+\alpha})=\Theta(n^{\alpha})$, which is $o(\sqrt{n/\log n})$ since $\alpha<1/2$, verifying the first criterion of \eqref{thm:main_6:regime_1:series_criteria}.

Finally, when $K_{n}=\min\{n,1+P_{n}\}$ where $P_{n}$ follows Poisson$(\lambda_{n})$ with $\lambda_{n}=c n^{\alpha}$ for some $1/2<\alpha<1$, 
\begin{align}
\E[K_{n}^{-1}]={}&\sum_{k=1}^{n-1}\frac{1}{k}e^{-\lambda_{n}}\frac{\lambda_{n}^{k-1}}{(k-1)!}+\frac{1}{n}\sum_{k\geqslant n-1}e^{-\lambda_{n}}\frac{\lambda_{n}^{k}}{k!}=\frac{e^{-\lambda_{n}}}{\lambda_{n}}\sum_{k=1}^{n-1}\frac{\lambda_{n}^{k}}{k!}+\frac{e^{-\lambda_{n}}}{n}\sum_{k\geqslant n-1}\frac{\lambda_{n}^{k}}{k!}=\Theta\left(\frac{1}{\lambda_{n}}\right),\label{Poisson_intermediate}
\end{align}
since the second sum in the second last expression of \eqref{Poisson_intermediate} can be bounded above as follows:
\begin{align}
\frac{e^{-\lambda_{n}}\lambda_{n}^{n-1}}{n!}\sum_{k\geqslant n-1}\frac{\lambda_{n}^{k-n+1}}{n(n+1)\cdots k}\leqslant \frac{\lambda_{n}^{n-1}}{n!}=\Theta\left(\frac{c^{n-1}e^{n}}{n^{n(1-\alpha)+\alpha+1/2}}\right)=O\left(\left(\frac{c e}{n^{1-\alpha}}\right)^{n}\right)=o(1),\nonumber
\end{align}
using Stirling's approximation. From \eqref{Poisson_intermediate}, we have $\E[K_{n}^{-1}]=\Theta(n^{-\alpha})\rightarrow 0$ as $n\rightarrow\infty$, so that the convergence criterion of \eqref{thm:C_{n}_a.s.:g_C^{2}} is satisfied, and $\sum_{i=N}^{n}\E[K_{i}^{-1}]=\Theta(\sum_{i=N}^{n}i^{-\alpha})=\Theta(n^{1-\alpha})=o(\sqrt{n/\log n})$ since $\alpha>1/2$, verifying the first criterion of \eqref{thm:main_6:regime_1:series_criteria}. This completes the proof of Proposition~\ref{prop:K_{n}_distributions}.
\end{proof}

\section{Acknowledgements}
A.\ Roy acknowledges support from the IIM-K SGRP Research Grant (No. SGRP/2025-26/22) for the accomplishment of this project.

\section{Appendix}\label{sec:appendix}
We state and prove here some lemmas whose proofs involve long and intricate computations. These lemmas find applications in the proofs of many of our main results.
\begin{lemma}\label{lem:Bernstein_approximation_errors_Holder_continuous}
When the function $g$, defined in \eqref{g_defn}, is \emph{H\"{o}lder continuous} with constant $L$ and exponent $\alpha$ for some $0<\alpha\leqslant 1$, i.e.\ 
\begin{equation}
\left|g(x_{1},y_{1})-g(x_{2},y_{2})\right|\leqslant L\left|\left|(x_{1},y_{1})-(x_{2},y_{2})\right|\right|_{2}^{\alpha} \quad \text{for all } (x_{1},y_{1}), (x_{2},y_{2})\in\mathcal{S},\label{Holder_cont_cond}
\end{equation} 
we have, for $H_{n}$ as defined in \eqref{H_{n}_defn}, $E_{n}$ as defined in \eqref{E_{n}_defn} and $\mathcal{S}_{n}$ as defined in \eqref{S_{n}_defn}, for each $n\geqslant N$:
\begin{align}
{}&\sup\left\{\left|H_{n}(x,y)-g(x,y)\right|:(x,y)\in\mathcal{S}\right\}\leqslant 2^{-\alpha/2}L\E\left[K_{n}^{-\alpha/2}\right]\label{Bernstein_error_Lipschitz_H_{n}}\\
{}&\sup\left\{\left|E_{n}\left(\frac{r_{1}}{n},\frac{r_{2}}{n}\right)-g\left(\frac{r_{1}}{n},\frac{r_{2}}{n}\right)\right|:\left(\frac{r_{1}}{n},\frac{r_{2}}{n}\right)\in \mathcal{S}_{n}\right\}\leqslant 2^{-\alpha/2}L\E\left[K_{n}^{-\alpha/2}\right]\label{Bernstein_error_Lipschitz_E_{n}}
\end{align}
\end{lemma}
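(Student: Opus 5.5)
The plan is to use the probabilistic representation of $H_{n}$ and $E_{n}$ through Experiments~\eqref{Exp_1} and \eqref{Exp_2}. For $(x,y)\in\mathcal{S}$ we have $H_{n}(x,y)=\E[g(V_{1}/K_{n},V_{2}/K_{n})]$, and $g(x,y)$ is deterministic. Hence
\begin{equation}
\left|H_{n}(x,y)-g(x,y)\right|\leqslant\E\left[\left|g\left(\frac{V_{1}}{K_{n}},\frac{V_{2}}{K_{n}}\right)-g(x,y)\right|\right]\leqslant L\,\E\left[\left|\left|\left(\frac{V_{1}}{K_{n}}-x,\frac{V_{2}}{K_{n}}-y\right)\right|\right|_{2}^{\alpha}\right],\nonumber
\end{equation}
by \eqref{Holder_cont_cond}. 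This uses the fact that $(V_{1}/K_{n},V_{2}/K_{n})\in\mathcal{S}$ always. I then condition on $K_{n}=k$ and apply Jensen's inequality to the concave map $t\mapsto t^{\alpha/2}$ on $[0,\infty)$:
\begin{equation}
\E\left[\left|\left|\cdot\right|\right|_{2}^{\alpha}\Big|K_{n}=k\right]\leqslant\left(\E\left[\left(\frac{V_{1}}{K_{n}}-x\right)^{2}+\left(\frac{V_{2}}{K_{n}}-y\right)^{2}\Big|K_{n}=k\right]\right)^{\alpha/2}=\left(\frac{x(1-x)+y(1-y)}{k}\right)^{\alpha/2},\nonumber
\end{equation}
where the equality comes from \eqref{exp:1_variances}.

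The key elementary step is to bound $x(1-x)+y(1-y)$ uniformly on $\mathcal{S}$. Since $x(1-x)+y(1-y)=(x+y)-(x^{2}+y^{2})\leqslant s-s^{2}/2$ with $s=x+y\in[0,1]$, and $s-s^{2}/2$ is increasing on $[0,1]$, the sum is at most $1/2$, attained at $x=y=1/2$. The conditional bound is therefore $(2k)^{-\alpha/2}=2^{-\alpha/2}k^{-\alpha/2}$. Averaging over the law $\mu_{n}$ of $K_{n}$ gives $2^{-\alpha/2}L\E[K_{n}^{-\alpha/2}]$ uniformly in $(x,y)$, which proves \eqref{Bernstein_error_Lipschitz_H_{n}}.

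For \eqref{Bernstein_error_Lipschitz_E_{n}} I repeat the argument with Experiment~\eqref{Exp_2}, taking $(x,y)=(r_{1}/n,r_{2}/n)\in\mathcal{S}_{n}$. Here $E_{n}(r_{1}/n,r_{2}/n)=\E[g(V_{1}/K_{n},V_{2}/K_{n})]$, and the first-moment identity in \eqref{exp:2_expectations_variances} shows the sample proportions are centred at $r_{i}/n$. The hypergeometric conditional variance is $\frac{r_{i}(n-r_{i})(n-k)}{k(n-1)n^{2}}=\frac{x_{i}(1-x_{i})}{k}\cdot\frac{n-k}{n-1}\leqslant\frac{x_{i}(1-x_{i})}{k}$, since $k\geqslant 1$. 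The same Jensen step and the same bound of $1/2$ then finish the proof.

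The only place I expect any care is needed is the uniform bound $x(1-x)+y(1-y)\leqslant 1/2$ on $\mathcal{S}$, which produces the factor $2^{-\alpha/2}$, together with the hypergeometric finite-population correction factor $(n-k)/(n-1)\leqslant 1$. Everything else is a direct application of Jensen's inequality.
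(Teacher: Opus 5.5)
Your proposal is correct and follows the paper's proof essentially step for step. You condition on $K_{n}=k$, apply the H\"{o}lder condition, and move the power $\alpha/2$ outside the conditional expectation by concavity (the paper calls this H\"{o}lder's inequality); you then insert the binomial or hypergeometric conditional variances and use $(n-k)/(n-1)\leqslant 1$. The only difference is cosmetic: you bound $x(1-x)+y(1-y)\leqslant 1/2$ via $s-s^{2}/2$, whereas the paper bounds each of $x(1-x)$ and $y(1-y)$ by $1/4$ directly, which gives the same constant with less work.
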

\begin{proof}
To prove \eqref{Bernstein_error_Lipschitz_H_{n}}, we fix $(x,y)\in\mathcal{S}$, and recall, as mentioned right after \eqref{H_{n}_defn}, that $H_{n}(x,y)=\E[g(V_{1}/K_{n},V_{2}/K_{n})]$, where $V_{1}$ and $V_{2}$ are as defined in the experiment outlined in \eqref{Exp_1}. Therefore,
\begin{align}
{}&\left|H_{n}(x,y)-g(x,y)\right|\leqslant\sum_{k\in I_{n}}\mu_{n}(k)\E\left[\left|g\left(\frac{V_{1}}{K_{n}},\frac{V_{2}}{K_{n}}\right)-g(x,y)\right|\Big|K_{n}=k\right]\nonumber\\
\leqslant{}&L\sum_{k\in I_{n}}\mu_{n}(k)\E\left[\left|\left|\left(\frac{V_{1}}{K_{n}},\frac{V_{2}}{K_{n}}\right)-(x,y)\right|\right|_{2}^{\alpha}\Big|K_{n}=k\right], \quad \text{using \eqref{Holder_cont_cond};}\nonumber\\
\leqslant{}&L\sum_{k\in I_{n}}\mu_{n}(k)\left\{\E\left[\left(\frac{V_{1}}{K_{n}}-x\right)^{2}+\left(\frac{V_{2}}{K_{n}}-y\right)^{2}\Big|K_{n}=k\right]\right\}^{\alpha/2}, \quad \text{using H\"{o}lder's inequality;}\nonumber\\
={}&L\sum_{k\in I_{n}}\mu_{n}(k)\left\{\frac{x(1-x)}{k}+\frac{y(1-y)}{k}\right\}^{\alpha/2}\leqslant 2^{-\alpha/2}L\E\left[K_{n}^{-\alpha/2}\right],\nonumber
\end{align}
where the second last step follows from \eqref{exp:1_variances}, and the last inequality follows by noting that each of $x(1-x)$ and $y(1-y)$ is bounded above by $1/4$. This completes the proof of \eqref{Bernstein_error_Lipschitz_H_{n}}.

Next, to prove \eqref{Bernstein_error_Lipschitz_E_{n}}, we recall, as mentioned right after \eqref{E_{n}_defn}, that $E_{n}(r_{1}/n,r_{2}/n)=\E[g(V_{1}/K_{n},V_{2}/K_{n})]$ where $V_{1}$ and $V_{2}$ are as defined in Experiment~\eqref{Exp_2}. Thus, using \eqref{Holder_cont_cond} and H\"{o}lder's inequality in the first two steps, we have
\begin{align}
{}&\left|E_{n}\left(\frac{r_{1}}{n},\frac{r_{2}}{n}\right)-g\left(\frac{r_{1}}{n},\frac{r_{2}}{n}\right)\right|
\leqslant L\sum_{k\in I_{n}}\mu_{n}(k)\left\{\E\left[\left(\frac{V_{1}}{K_{n}}-\frac{r_{1}}{n}\right)^{2}+\left(\frac{V_{2}}{K_{n}}-\frac{r_{2}}{n}\right)^{2}\Big|K_{n}=k\right]\right\}^{\alpha/2}\nonumber\\
={}&L\sum_{k\in I_{n}}\mu_{n}(k)\left[\frac{(n-k)}{k(n-1)n^{2}}\left\{r_{1}(n-r_{1})+r_{2}(n-r_{2})\right\}\right]^{\alpha/2}\leqslant 2^{-\alpha/2}L\E\left[K_{n}^{-\alpha/2}\right],\nonumber
\end{align}
where the second last step follows from \eqref{exp:2_expectations_variances}, and the last step follows from the inequalities that $r_{i}(n-r_{i})\leqslant n^{2}/4$ for $i\in\{1,2\}$, and $(n-k)\leqslant(n-1)$. This completes the proof of \eqref{Bernstein_error_Lipschitz_E_{n}}.
\end{proof}

\begin{lemma}\label{lem:Bernstein_approximation_errors_C^{1}}
Assume that $g$, defined in \eqref{g_defn}, is in $\mathcal{C}^{(1)}(\mathcal{S})$. Then, for $\omega(\nabla g;\cdot)$ as defined in \S\ref{subsec:notations_2}, $H_{n}$ as defined in \eqref{H_{n}_defn}, $E_{n}$ as defined in \eqref{E_{n}_defn}, $\mathcal{S}_{n}$ as defined in \eqref{S_{n}_defn} and $C=1/\sqrt{2}+1/2$, for each $n\geqslant N$: 
\begin{align}
\sup\left\{\left|H_{n}(x,y)-g(x,y)\right|:(x,y)\in\mathcal{S}\right\}\leqslant
\begin{cases}
& C\omega\left(\nabla g;\sqrt{\E\left[K_{n}^{-1}\right]}\right)\sqrt{\E\left[K_{n}^{-1}\right]},\\
& C\omega\left(\nabla g;\E\left[K_{n}^{-1}\right]\left(\E\left[K_{n}^{-1/2}\right]\right)^{-1}\right)\E\left[K_{n}^{-1/2}\right],
\end{cases}\label{Bernstein_error_C^{1}_H_{n}}\\
\sup\left\{\left|E_{n}\left(\frac{r_{1}}{n},\frac{r_{2}}{n}\right)-g\left(\frac{r_{1}}{n},\frac{r_{2}}{n}\right)\right|:\left(\frac{r_{1}}{n},\frac{r_{2}}{n}\right)\in \mathcal{S}_{n}\right\}\leqslant
\begin{cases}
& C\omega\left(\nabla g;\sqrt{\E\left[K_{n}^{-1}\right]}\right)\sqrt{\E\left[K_{n}^{-1}\right]},\\
& C\omega\left(\nabla g;\E\left[K_{n}^{-1}\right]\left(\E\left[K_{n}^{-1/2}\right]\right)^{-1}\right)\E\left[K_{n}^{-1/2}\right].
\end{cases}\label{Bernstein_error_C^{1}_E_{n}}
\end{align}
\end{lemma}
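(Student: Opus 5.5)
We prove both bounds by a first-order Taylor expansion of $g$ around the centre point, in the same way as Lemma~\ref{lem:Bernstein_approximation_errors_Holder_continuous}. Fix $(x,y)\in\mathcal{S}$ (respectively $(r_{1}/n,r_{2}/n)\in\mathcal{S}_{n}$, with $x=r_{1}/n$, $y=r_{2}/n$). Write $W=(V_{1}/K_{n},V_{2}/K_{n})$, with $V_{1},V_{2}$ from \eqref{Exp_1} (respectively \eqref{Exp_2}), and $D=W-(x,y)$. Since $\mathcal{S}$ is convex and both $W$ and $(x,y)$ lie in $\mathcal{S}$, the mean value theorem gives
\[
g(W)-g(x,y)=\nabla g(x,y)\cdot D+\big(\nabla g(\xi)-\nabla g(x,y)\big)\cdot D
\]
for some $\xi$ on the segment joining them. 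By \eqref{exp:1_expectations} (or \eqref{exp:2_expectations_variances}), $\E[D\mid K_{n}=k]=0$ for every $k$, so the linear term vanishes after taking expectations. Since $\|\xi-(x,y)\|_{2}\leqslant\|D\|_{2}$, Cauchy--Schwarz then gives
\[
|H_{n}(x,y)-g(x,y)|\leqslant\E\big[\omega(\nabla g;\|D\|_{2}^{+})\,\|D\|_{2}\big],
\]
where $\omega(\nabla g;\|D\|_2^{+})$ stands for $\omega(\nabla g;\delta)$ with $\delta$ slightly exceeding $\|D\|_{2}$.

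To handle the random argument of $\omega$, fix $\delta>0$ and use the standard subadditivity bound on the convex set $\mathcal{S}$:
\[
\omega(\nabla g;t)\leqslant(1+t/\delta)\,\omega(\nabla g;\delta).
\]
This yields
\[
|H_{n}-g|\leqslant\omega(\nabla g;\delta)\Big(\E\|D\|_{2}+\delta^{-1}\E\|D\|_{2}^{2}\Big).
\]
Conditioning on $K_{n}=k$, \eqref{exp:1_variances} gives $\E[\|D\|_{2}^{2}\mid K_{n}=k]=\{x(1-x)+y(1-y)\}/k\leqslant 1/(2k)$. In the hypergeometric case, \eqref{exp:2_expectations_variances} gives the same bound, since $(n-k)\leqslant(n-1)$ and $r_{i}(n-r_{i})\leqslant n^{2}/4$. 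Hence $\E\|D\|_{2}^{2}\leqslant\tfrac12\E[K_{n}^{-1}]$, and by conditional Jensen, $\E\|D\|_{2}\leqslant 2^{-1/2}\E[K_{n}^{-1/2}]$. Alternatively, by Jensen applied to the whole expectation, $\E\|D\|_{2}\leqslant 2^{-1/2}\sqrt{\E[K_{n}^{-1}]}$.

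The two branches of the lemma come from two choices of $\delta$:
\begin{itemize}
\item Taking $\delta=\sqrt{\E[K_{n}^{-1}]}$ and using the second Jensen bound gives $\omega(\nabla g;\delta)\sqrt{\E[K_{n}^{-1}]}\,(2^{-1/2}+1/2)$.
\item Taking $\delta=\E[K_{n}^{-1}]/\E[K_{n}^{-1/2}]$ and using the conditional bound gives $\omega(\nabla g;\delta)\,\E[K_{n}^{-1/2}]\,(2^{-1/2}+1/2)$.
\end{itemize}
Both constants equal $C=1/\sqrt{2}+1/2$ as claimed. The argument for $E_{n}$ is identical, with Experiment~\eqref{Exp_2} in place of \eqref{Exp_1}.

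The main obstacle is justifying $\omega(\nabla g;t)\leqslant(1+t/\delta)\omega(\nabla g;\delta)$, together with the strict versus non-strict inequality in the definition of $\omega$. I would handle this by subdividing the segment from $(x,y)$ to $\xi$ into $\lceil t/\delta\rceil$ pieces, each of length less than $\delta$; this stays inside $\mathcal{S}$ by convexity. The strict inequality is dealt with either by using $\|\xi-(x,y)\|_{2}\leqslant\|D\|_{2}$ and a limiting argument, or by noting that $\lceil t/\delta\rceil\leqslant 1+t/\delta$ still holds when one more piece is added in the boundary case.
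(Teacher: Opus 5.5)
Your proposal is correct and follows essentially the same route as the paper. Both arguments use the mean value decomposition around the centre point, chain $\nabla g$ along the segment in $\lfloor\,\cdot\,/\delta\rfloor+1$ pieces to obtain the factor $(1+\|D\|_{2}/\delta)\,\omega(\nabla g;\delta)$, and kill the linear term via the zero conditional mean. They then apply the conditional second-moment bound $1/(2k)$ (Cauchy--Schwarz/Jensen given $K_{n}=k$) and make the same two choices of $\delta$, giving $C=1/\sqrt{2}+1/2$.

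Your explicit handling of the strict inequality in the definition of $\omega$ is a small tidy-up the paper skips. The paper spaces its chaining points exactly $\delta$ apart and tacitly relies on continuity of $\nabla g$ to cover that boundary case.
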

\begin{proof}
For any $(u_{1},v_{1}), (u_{2},v_{2})\in\mathcal{S}$, with $\mathcal{S}$ as defined in \eqref{domain_defn}, and $\delta>0$, let us define
\begin{equation}
\lambda\left((u_{1},v_{1}),(u_{2},v_{2});\delta\right)=\left\lfloor\sqrt{(u_{1}-u_{2})^{2}+(v_{1}-v_{2})^{2}}\delta^{-1}\right\rfloor,\label{lambda_defn}
\end{equation}
where the notation $\lfloor x \rfloor$ indicates the greatest integer less than or equal to $x$. Using the notation introduced in \eqref{nabla_defn} and an application of the mean value theorem, we obtain, for some $(\xi_{1},\xi_{2})$ lying on the line segment joining $(u_{1},v_{1})$ and $(u_{2},v_{2})$:
\begin{align}
{}&g\left(u_{2},v_{2}\right)-g\left(u_{1},v_{1}\right)=\nabla g(u_{1},v_{1})\begin{bmatrix}
u_{2}-u_{1}\\
v_{2}-v_{1}
\end{bmatrix}+\left(\nabla g(\xi_{1},\xi_{2})-\nabla g(u_{1},v_{1})\right)\begin{bmatrix}
u_{2}-u_{1}\\
v_{2}-v_{1}
\end{bmatrix}.\label{intermediate_1}
\end{align}  
By \eqref{lambda_defn}, and the fact that $(\xi_{1},\xi_{2})$ lies on the line segment \emph{between} $(u_{1},v_{1})$ and $(u_{2},v_{2})$, we have $\lambda\leqslant \lambda\left((u_{1},v_{1}),(u_{2},v_{2});\delta\right)$ where, for brevity of notation, we set $\lambda=\lambda\left((u_{1},v_{1}),(\xi_{1},\xi_{2});\delta\right)$. We choose points $(w_{1},z_{1}), \ldots, (w_{\lambda},z_{\lambda})$ on the line segment joining $(u_{1},v_{1})$ and $(\xi_{1},\xi_{2})$ such that, for each $i\in\{1,2,\ldots,\lambda\}$: 
\begin{equation}
w_{i}=u_{1}+\frac{i\delta}{\sqrt{1+\beta^{2}}} \quad \text{and} \quad z_{i}=v_{1}+\frac{i\beta\delta}{\sqrt{1+\beta^{2}}}, \quad \text{where} \quad \beta=\frac{\xi_{2}-v_{1}}{\xi_{1}-u_{1}}.\nonumber
\end{equation}
\sloppy Setting $(w_{0},z_{0})=(u_{1},v_{1})$ and $(w_{\lambda+1},z_{\lambda+1})=(\xi_{1},\xi_{2})$, we see that the Euclidean distance between $(w_{i},z_{i})$ and $(w_{i+1},z_{i+1})$ is at most $\delta$ for each $i\in\{0,1,\ldots,\lambda\}$, so that $||\nabla g(w_{i+1},z_{i+1})-\nabla g(w_{i},z_{i})||_{2}\leqslant \omega(\nabla g;\delta)$ for each $i\in\{0,1,\ldots,\lambda\}$, where $\omega(\nabla g;\delta)$ is as defined in \eqref{derivative_g_modulus_of_continuity}. This, along with the Cauchy-Schwarz inequality and the triangle inequality, yields:
\begin{align}
{}&\left|\left(\nabla g(\xi_{1},\xi_{2})-\nabla g(u_{1},v_{1})\right)\begin{bmatrix}
u_{2}-u_{1}\\
v_{2}-v_{1}
\end{bmatrix}\right|\leqslant\left|\left|\sum_{i=0}^{\lambda}\left(\nabla g(w_{i+1},z_{i+1})-\nabla g(w_{i},z_{i})\right)\right|\right|_{2}\left|\left|\begin{bmatrix}
u_{2}-u_{1}\\
v_{2}-v_{1}
\end{bmatrix}\right|\right|_{2}\nonumber\\
\leqslant{}&\left[\sum_{i=0}^{\lambda}\left|\left|\nabla g(w_{i+1},z_{i+1})-\nabla g(w_{i},z_{i})\right|\right|_{2}\right]\left|\left|\begin{bmatrix}
u_{2}-u_{1}\\
v_{2}-v_{1}
\end{bmatrix}\right|\right|_{2}\nonumber\\
\leqslant{}&(\lambda+1)\omega\left(\nabla g;\delta\right)\left|\left|\begin{bmatrix}
u_{2}\\
v_{2}
\end{bmatrix}-\begin{bmatrix}
u_{1}\\
v_{1}
\end{bmatrix}\right|\right|_{2}\leqslant \left\{\lambda\left((u_{1},v_{1}),(u_{2},v_{2});\delta\right)+1\right\}\omega\left(\nabla g;\delta\right)\left|\left|\begin{bmatrix}
u_{2}\\
v_{2}
\end{bmatrix}-\begin{bmatrix}
u_{1}\\
v_{1}
\end{bmatrix}\right|\right|_{2}.\label{intermediate_3}
\end{align}
Recalling $H_{n}$ from \eqref{H_{n}_defn}, applying \eqref{intermediate_1} and \eqref{intermediate_3}, and recalling $V_{1}$ and $V_{2}$ as defined in Experiment~\eqref{Exp_1}, we deduce, for any $(x,y)\in\mathcal{S}$:
\begin{align}
{}&\left|H_{n}(x,y)-g(x,y)\right|\leqslant\left|\sum_{k\in I_{n}}\mu_{n}(k)\sum_{i=0}^{k}\sum_{j=0}^{k-i}\nabla g(x,y)\begin{bmatrix}
i/k-x\\
j/k-y
\end{bmatrix}{k\choose i}{k-i\choose j}x^{i}y^{j}(1-x-y)^{k-i-j}\right|+\nonumber\\&\sum_{k\in I_{n}}\mu_{n}(k)\sum_{i=0}^{k}\sum_{j=0}^{k-i}\left\{\lambda\left((x,y),\left(\frac{i}{k},\frac{j}{k}\right);\delta\right)+1\right\}\omega\left(\nabla g;\delta\right)\left|\left|\begin{bmatrix}
i/k-x\\
j/k-y
\end{bmatrix}\right|\right|_{2}{k\choose i}{k-i\choose j}x^{i}y^{j}(1-x-y)^{k-i-j}\nonumber\\
\leqslant{}&\left|\sum_{k\in I_{n}}\mu_{n}(k)\left\{\frac{\partial g(x,y)}{\partial x}\E\left[\frac{V_{1}}{K_{n}}-x\Big|K_{n}=k\right]+\frac{\partial g(x,y)}{\partial y}\E\left[\frac{V_{2}}{K_{n}}-y\Big|K_{n}=k\right]\right\}\right|\nonumber\\&+\omega\left(\nabla g;\delta\right)\sum_{k\in I_{n}}\mu_{n}(k)\left(\sum_{i=0}^{k}\sum_{j=0}^{k-i}\left|\left|\begin{bmatrix}
i/k-x\\
j/k-y
\end{bmatrix}\right|\right|^{2}_{2}{k\choose i}{k-i\choose j}x^{i}y^{j}(1-x-y)^{k-i-j}\right)^{1/2}\nonumber\\&+\omega\left(\nabla g;\delta\right)\delta^{-1}\sum_{k\in I_{n}}\mu_{n}(k)\sum_{i=0}^{k}\sum_{j=0}^{k-i}\left|\left|\begin{bmatrix}
i/k-x\\
j/k-y
\end{bmatrix}\right|\right|_{2}^{2}{k\choose i}{k-i\choose j}x^{i}y^{j}(1-x-y)^{k-i-j}\label{intermediate_8_1}\\
={}&\omega\left(\nabla g;\delta\right)\sum_{k\in I_{n}}\mu_{n}(k)\left[\E\left\{\left(\frac{V_{1}}{K_{n}}-x\right)^{2}+\left(\frac{V_{2}}{K_{n}}-y\right)^{2}\Bigg|K_{n}=k\right\}\right]^{1/2}\nonumber\\&+\omega\left(\nabla g;\delta\right)\delta^{-1}\sum_{k\in I_{n}}\mu_{n}(k)\E\left\{\left(\frac{V_{1}}{K_{n}}-x\right)^{2}+\left(\frac{V_{2}}{K_{n}}-y\right)^{2}\Bigg|K_{n}=k\right\}\label{intermediate_8_2}\\
={}&\omega\left(\nabla g;\delta\right)\sum_{k\in I_{n}}\mu_{n}(k)\left\{\left(\frac{x(1-x)}{k}+\frac{y(1-y)}{k}\right)^{1/2}+\frac{1}{\delta}\left(\frac{x(1-x)}{k}+\frac{y(1-y)}{k}\right)\right\}\nonumber\\
\leqslant{}&\omega\left(\nabla g;\delta\right)\left(2^{-1/2}\E\left[K_{n}^{-1/2}\right]+(2\delta)^{-1}\E\left[K_{n}^{-1}\right]\right).\label{intermediate_8}
\end{align}
Here, in order to deduce \eqref{intermediate_8_1}, we have used the joint distribution of $V_{1}$ and $V_{2}$, deduced from the description of Experiment~\eqref{Exp_1}, applied the Cauchy-Schwarz inequality (this yields the term in the middle), and the inequality $\lambda((x,y),(i/k,j/k);\delta)\leqslant \delta^{-1}||(i/k,j/k)-(x,y)||_{2}$, which follows from \eqref{lambda_defn}. The equality in \eqref{intermediate_8_2} is deduced from \eqref{exp:1_expectations} and the conditional variance of each of $V_{1}$ and $V_{2}$ that follows from Experiment~\eqref{Exp_1}. The penultimate step follows from \eqref{exp:1_variances}, and the final step is obtained by noting that each of $x(1-x)$ and $y(1-y)$ is bounded above by $1/4$. At this point, setting $\delta=(\E[K_{n}^{-1}])^{1/2}$ in \eqref{intermediate_8}, and applying Jensen's inequality, yields the first inequality stated in \eqref{Bernstein_error_C^{1}_H_{n}}, whereas setting $\delta=\E[K_{n}^{-1}](\E[K_{n}^{-1/2}])^{-1}$ in \eqref{intermediate_8} yields the second inequality stated in \eqref{Bernstein_error_C^{1}_H_{n}}.

Next, recalling the functions $E_{n}$ from \eqref{E_{n}_defn}, applying \eqref{intermediate_1} and \eqref{intermediate_3}, and recalling $V_{1}$ and $V_{2}$ from the experiment outlined in \eqref{Exp_2}, we deduce, for any $(r_{1}/n,r_{2}/n)\in \mathcal{S}_{n}$:
\begin{align}
{}&\left|E_{n}\left(\frac{r_{1}}{n},\frac{r_{2}}{n}\right)-g\left(\frac{r_{1}}{n},\frac{r_{2}}{n}\right)\right|\leqslant\Bigg|\sum_{k\in I_{n}}\mu_{n}(k)\sum_{i=0}^{k}\sum_{j=0}^{k-i}\nabla g(r_{1}/n,r_{2}/n)\begin{bmatrix}
i/k-r_{1}/n\\
j/k-r_{2}/n
\end{bmatrix}{r_{1}\choose i}{r_{2}\choose j}{n-r_{1}-r_{2}\choose k-i-j}\nonumber\\&\left\{{n\choose k}\right\}^{-1}\Bigg|+\sum_{k\in I_{n}}\mu_{n}(k)\sum_{i=0}^{k}\sum_{j=0}^{k-i}\left\{\lambda\left(\left(\frac{r_{1}}{n},\frac{r_{2}}{n}\right),\left(\frac{i}{k},\frac{j}{k}\right);\delta\right)+1\right\}\omega\left(\nabla g;\delta\right)\left|\left|\begin{bmatrix}
i/k-r_{1}/n\\
j/k-r_{2}/n
\end{bmatrix}\right|\right|_{2}{r_{1}\choose i}{r_{2}\choose j}\nonumber\\&{n-r_{1}-r_{2}\choose k-i-j}\left\{{n\choose k}\right\}^{-1}\nonumber\\
\leqslant{}&\left|\sum_{k\in I_{n}}\mu_{n}(k)\left\{\frac{\partial g}{\partial x}\Big|_{(r_{1}/n,r_{2}/n)}\E\left[\frac{V_{1}}{K_{n}}-\frac{r_{1}}{n}\Big|K_{n}=k\right]+\frac{\partial g}{\partial y}\Big|_{(r_{1}/n,r_{2}/n)}\E\left[\frac{V_{2}}{K_{n}}-\frac{r_{2}}{n}\Big|K_{n}=k\right]\right\}\right|\nonumber\\&+\omega\left(\nabla g;\delta\right)\sum_{k\in I_{n}}\mu_{n}(k)\left\{\E\left[\left|\left|\left(\frac{V_{1}}{K_{n}},\frac{V_{2}}{K_{n}}\right)-\left(\frac{r_{1}}{n},\frac{r_{2}}{n}\right)\right|\right|_{2}^{2}\Big|K_{n}=k\right]\right\}^{1/2}\nonumber\\&+\omega\left(\nabla g;\delta\right)\delta^{-1}\sum_{k\in I_{n}}\mu_{n}(k)\E\left[\left|\left|\left(\frac{V_{1}}{K_{n}},\frac{V_{2}}{K_{n}}\right)-\left(\frac{r_{1}}{n},\frac{r_{2}}{n}\right)\right|\right|_{2}^{2}\Big|K_{n}=k\right], \label{intermediate_11_1}\\
={}&\omega\left(\nabla g;\delta\right)\sum_{k\in I_{n}}\mu_{n}(k)\left\{\left[\frac{(n-k)}{k(n-1)n^{2}}\left\{r_{1}(n-r_{1})+r_{2}(n-r_{2})\right\}\right]^{1/2}+\frac{\delta^{-1}(n-k)}{k(n-1)n^{2}}\left\{r_{1}(n-r_{1})+r_{2}(n-r_{2})\right\}\right\}\nonumber\\
\leqslant{}&\omega\left(\nabla g;\delta\right)\left\{2^{-1/2}\E\left[K_{n}^{-1/2}\right]+(2\delta)^{-1}\E\left[K_{n}^{-1}\right]\right\}.\label{intermediate_11}
\end{align}
The inequality in \eqref{intermediate_11_1} follows from the joint distribution of $V_{1}$ and $V_{2}$ as deduced from the experiment described in \eqref{Exp_2}, by an application of the Cauchy-Schwarz inequality (which yields the term in the middle), and the inequality $\lambda((r_{1}/n,r_{2}/n),(i/k,j/k);\delta)\leqslant \delta^{-1}||(r_{1}/n,r_{2}/n)-(i/k,j/k)||_{2}$ that follows from \eqref{lambda_defn}. The next step follows from \eqref{exp:2_expectations_variances}, and the last step follows from noting that 
\begin{enumerate*}
\item $r_{i}(n-r_{i})\leqslant n^{2}/4$ for each $i\in\{1,2\}$ and 
\item $(n-k)\leqslant (n-1)$.
\end{enumerate*}
Setting $\delta=(\E[K_{n}^{-1}])^{1/2}$ in \eqref{intermediate_11}, and applying Jensen's inequality, yields the first inequality stated in \eqref{Bernstein_error_C^{1}_E_{n}}, whereas setting $\delta=\E[K_{n}^{-1}](\E[K_{n}^{-1/2}])^{-1}$ in \eqref{intermediate_11} yields the second inequality stated in \eqref{Bernstein_error_C^{1}_E_{n}}. 
\end{proof}

\begin{lemma}\label{lem:Bernstein_approximation_errors_C^{2}}
When the function $g$, defined in \eqref{g_defn}, is in $\mathcal{C}^{(2)}(\mathcal{S})$, then, for $H_{n}$ as defined in \eqref{H_{n}_defn}, $E_{n}$ as defined in \eqref{E_{n}_defn} and $\mathcal{S}_{n}$ as defined in \eqref{S_{n}_defn}, there exists some constant $C>0$ such that, for each $n\geqslant N$:
\begin{align}
{}&\sup\left\{\left|H_{n}(x,y)-g(x,y)\right|:(x,y)\in\mathcal{S}\right\}\leqslant C\E\left[K_{n}^{-1}\right]\label{Bernstein_error_C^{2}_H_{n}}\\
{}&\sup\left\{\left|E_{n}\left(\frac{r_{1}}{n},\frac{r_{2}}{n}\right)-g\left(\frac{r_{1}}{n},\frac{r_{2}}{n}\right)\right|:\left(\frac{r_{1}}{n},\frac{r_{2}}{n}\right)\in\mathcal{S}_{n}\right\}\leqslant C\E\left[K_{n}^{-1}\right].\label{Bernstein_error_C^{2}_E_{n}}
\end{align}
\end{lemma}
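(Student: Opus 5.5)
We follow the template of Lemma~\ref{lem:Bernstein_approximation_errors_C^{1}}, replacing the mean-value expansion \eqref{intermediate_1} by a second-order Taylor expansion with an explicit remainder. Since $g\in\mathcal{C}^{(2)}(\mathcal{S})$, there is an extension $\hat{g}\in\mathcal{C}^{(2)}(\mathcal{U})$ on an open $\mathcal{U}\supset\mathcal{S}$. Because $\mathcal{S}$ is compact and convex, every segment joining two points of $\mathcal{S}$ stays in $\mathcal{S}$. Hence
\[
M=\max\left\{\sup_{\mathcal{S}}\left|\partial^{2}g/\partial x^{2}\right|,\sup_{\mathcal{S}}\left|\partial^{2}g/\partial x\partial y\right|,\sup_{\mathcal{S}}\left|\partial^{2}g/\partial y^{2}\right|\right\}<\infty .
\]
For $(u_{1},v_{1}),(u_{2},v_{2})\in\mathcal{S}$, Taylor's theorem with Lagrange remainder along the segment then gives
\[
g(u_{2},v_{2})=g(u_{1},v_{1})+\nabla g(u_{1},v_{1})\begin{bmatrix}u_{2}-u_{1}\\ v_{2}-v_{1}\end{bmatrix}+R,
\]
where $|R|\leqslant \tfrac{M}{2}\{|u_{2}-u_{1}|+|v_{2}-v_{1}|\}^{2}\leqslant M\,\|(u_{2},v_{2})-(u_{1},v_{1})\|_{2}^{2}$.

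For \eqref{Bernstein_error_C^{2}_H_{n}}, fix $(x,y)\in\mathcal{S}$ and write $H_{n}(x,y)=\E[g(V_{1}/K_{n},V_{2}/K_{n})]$ with $V_{1},V_{2}$ from Experiment~\eqref{Exp_1}. We take $(u_{1},v_{1})=(x,y)$ and $(u_{2},v_{2})=(V_{1}/K_{n},V_{2}/K_{n})$ in the expansion, and condition on $K_{n}=k$. By \eqref{exp:1_expectations}, the linear term has zero conditional expectation. The remainder is bounded by $M$ times the conditional second moment, which by \eqref{exp:1_variances} equals $M\{x(1-x)+y(1-y)\}/k\leqslant M/(2k)$. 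Averaging over $\mu_{n}$ gives $|H_{n}(x,y)-g(x,y)|\leqslant (M/2)\E[K_{n}^{-1}]$, uniformly in $(x,y)$.

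For \eqref{Bernstein_error_C^{2}_E_{n}}, we repeat the argument at $(r_{1}/n,r_{2}/n)\in\mathcal{S}_{n}$ with Experiment~\eqref{Exp_2}, using \eqref{exp:2_expectations_variances}. The conditional means are again exact, so the linear term vanishes. The conditional second moment is
\[
\frac{(n-k)\{r_{1}(n-r_{1})+r_{2}(n-r_{2})\}}{k(n-1)n^{2}}\leqslant \frac{1}{2k},
\]
using $r_{i}(n-r_{i})\leqslant n^{2}/4$ and $n-k\leqslant n-1$. This gives the same constant $C=M/2$.

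The only point needing care is the remainder bound. The sampled point $(V_{1}/K_{n},V_{2}/K_{n})$ always lies in $\mathcal{S}$, since $V_{1}+V_{2}\leqslant K_{n}$, so the segment to it stays in $\mathcal{S}$ by convexity, and the Hessian bound applies there. Once that is checked, the proof is a direct computation. Finally, the bounds must hold uniformly in $n$, which they do since $C=M/2$ depends only on $g$.
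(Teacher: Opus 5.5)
Your proof is correct and follows the paper's argument: a second-order Taylor expansion about the evaluation point, the linear term killed by the exact conditional means in \eqref{exp:1_expectations} and \eqref{exp:2_expectations_variances}, and the remainder controlled by the supremum of the second derivatives over $\mathcal{S}$ times the conditional second moment. The only cosmetic difference is how the mixed term is handled. You absorb it pointwise via $(|a|+|b|)^{2}\leqslant 2(a^{2}+b^{2})$ and get $C=M/2$, whereas the paper applies Cauchy--Schwarz to the conditional moments and gets $C=3M/4$.
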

\begin{proof}
We begin by noting that, since $g\in\mathcal{C}^{(2)}(\mathcal{S})$ and $\mathcal{S}$ is a compact set, each of 
\begin{equation}
M_{1,1}=\sup\left\{\left|\frac{\partial^{2}g}{\partial x^{2}}\right|:(x,y)\in\mathcal{S}\right\},\ M_{2,2}=\sup\left\{\left|\frac{\partial^{2}g}{\partial y^{2}}\right|:(x,y)\in\mathcal{S}\right\},\ M_{1,2}=\sup\left\{\left|\frac{\partial^{2}g}{\partial x \partial y}\right|:(x,y)\in\mathcal{S}\right\}\nonumber
\end{equation}
is finite. Fix any $(x,y)\in\mathcal{S}$. In what follows, we let $(\xi_{i,j},\zeta_{i,j})$ indicate \emph{some} point lying on the line segment joining $(x,y)$ and $(i/k,j/k)$), so that, by performing a Taylor expansion of $g$ around $(x,y)$, we obtain:
\begin{align}
{}&\left|H_{n}(x,y)-g(x,y)\right|=\Bigg|\sum_{k\in I_{n}}\mu_{n}(k)\sum_{i=0}^{k}\sum_{j=0}^{k-i}\Bigg\{\frac{\partial g}{\partial x}\left(\frac{i}{k}-x\right)+\frac{\partial g}{\partial y}\left(\frac{j}{k}-y\right)+\frac{1}{2}\frac{\partial^{2}g}{\partial x^{2}}\Big|_{(\xi_{i,j},\zeta_{i,j})}\left(\frac{i}{k}-x\right)^{2}+\nonumber\\&\frac{1}{2}\frac{\partial^{2}g}{\partial y^{2}}\Big|_{(\xi_{i,j},\zeta_{i,j})}\left(\frac{j}{k}-y\right)^{2}+\frac{\partial^{2}g}{\partial x \partial y}\Big|_{(\xi_{i,j},\zeta_{i,j})}\left(\frac{i}{k}-x\right)\left(\frac{j}{k}-y\right)\Bigg\}{k\choose i}{k-i\choose j}x^{i}y^{j}(1-x-y)^{k-i-j}\Bigg|\nonumber\\
\leqslant{}&\left|\sum_{k\in I_{n}}\mu_{n}(k)\left\{\frac{\partial g}{\partial x}\E\left[\left(\frac{V_{1}}{K_{n}}-x\right)\Big|K_{n}=k\right]+\frac{\partial g}{\partial y}\E\left[\left(\frac{V_{2}}{K_{n}}-y\right)\Big|K_{n}=k\right]\right\}\right|\nonumber\\&+\frac{M_{1,1}}{2}\sum_{k\in I_{n}}\mu_{n}(k)\E\left[\left(\frac{V_{1}}{K_{n}}-x\right)^{2}\Big|K_{n}=k\right]+\frac{M_{2,2}}{2}\sum_{k\in I_{n}}\mu_{n}(k)\E\left[\left(\frac{V_{2}}{K_{n}}-y\right)^{2}\Big|K_{n}=k\right]\nonumber\\&+M_{1,2}\sum_{k\in I_{n}}\mu_{n}(k)\left(\E\left[\left(\frac{V_{1}}{K_{n}}-x\right)^{2}\Big|K_{n}=k\right]\right)^{1/2}\left(\E\left[\left(\frac{V_{2}}{K_{n}}-y\right)^{2}\Big|K_{n}=k\right]\right)^{1/2}\label{intermediate_11_3}\\
\leqslant{}&M\sum_{k\in I_{n}}\mu_{n}(k)\left\{\frac{x(1-x)}{k}+\frac{y(1-y)}{k}+\frac{\sqrt{x(1-x)y(1-y)}}{k}\right\}\leqslant \frac{3M}{4}\E\left[K_{n}^{-1}\right],\label{intermediate_11_2}
\end{align}
where $V_{1}$ and $V_{2}$ are as defined in Experiment~\eqref{Exp_1}, and we let $M=\max\{M_{1,1},M_{1,2},M_{2,2}\}$. The step shown in \eqref{intermediate_11_3} follows from the joint distribution of $V_{1}$ and $V_{2}$ deduced from Experiment~\eqref{Exp_1}, and an application of the Cauchy-Schwarz inequality. The penultimate step is obtained from \eqref{exp:1_expectations} and \eqref{exp:1_variances}, and the final expression in \eqref{intermediate_11_2} follows since each of $x(1-x)$ and $y(1-y)$ is bounded above by $1/4$. 

We now come to the proof of \eqref{Bernstein_error_C^{2}_E_{n}}. Fix $(r_{1}/n,r_{2}/n)\in \mathcal{S}_{n}$. For \emph{some} point $(\xi_{i,j},\zeta_{i,j})$ lying on the line segment joining $(r_{1}/n,r_{2}/n)$ and $(i/k,j/k)$ for each $i,j \in\mathbb{N}_{0}$ with $i+j\leqslant k$, we have, by Taylor expansion of the function $g$ around $(r_{1}/n,r_{2}/n)$: 
\begin{align}
{}&\left|E_{n}\left(\frac{r_{1}}{n},\frac{r_{2}}{n}\right)-g\left(\frac{r_{1}}{n},\frac{r_{2}}{n}\right)\right|=\Bigg|\sum_{k\in I_{n}}\mu_{n}(k)\sum_{i=0}^{k}\sum_{j=0}^{k-i}\Bigg\{\frac{\partial g}{\partial x}\Big|_{(r_{1}/n,r_{2}/n)}\left(\frac{i}{k}-\frac{r_{1}}{n}\right)+\frac{\partial g}{\partial y}\Big|_{(r_{1}/n,r_{2}/n)}\left(\frac{j}{k}-\frac{r_{2}}{n}\right)\nonumber\\&+\frac{1}{2}\frac{\partial^{2}g}{\partial x^{2}}\Big|_{(\xi_{i,j},\zeta_{i,j})}\left(\frac{i}{k}-\frac{r_{1}}{n}\right)^{2}+\frac{1}{2}\frac{\partial^{2}g}{\partial y^{2}}\Big|_{(\xi_{i,j},\zeta_{i,j})}\left(\frac{j}{k}-\frac{r_{2}}{n}\right)^{2}+\frac{\partial^{2}g}{\partial x \partial y}\Big|_{(\xi_{i,j},\zeta_{i,j})}\left(\frac{i}{k}-\frac{r_{1}}{n}\right)\left(\frac{j}{k}-\frac{r_{2}}{n}\right)\Bigg\}\nonumber\\&{r_{1}\choose i}{r_{2}\choose j}{n-r_{1}-r_{2}\choose k-i-j}\left\{{n\choose k}\right\}^{-1}\Bigg|\nonumber\\
\leqslant{}&\left|\sum_{k\in I_{n}}\mu_{n}(k)\left\{\frac{\partial g}{\partial x}\Big|_{(r_{1}/n,r_{2}/n)}\E\left[\frac{V_{1}}{K_{n}}-\frac{r_{1}}{n}\Big|K_{n}=k\right]+\frac{\partial g}{\partial y}\Big|_{(r_{1}/n,r_{2}/n)}\E\left[\frac{V_{2}}{K_{n}}-\frac{r_{2}}{n}\Big|K_{n}=k\right]\right\}\right|\nonumber\\&+\frac{M_{1,1}}{2}\sum_{k\in I_{n}}\mu_{n}(k)\E\left[\left(\frac{V_{1}}{K_{n}}-\frac{r_{1}}{n}\right)^{2}\Big|K_{n}=k\right]+\frac{M_{2,2}}{2}\sum_{k\in I_{n}}\mu_{n}(k)\E\left[\left(\frac{V_{2}}{K_{n}}-\frac{r_{2}}{n}\right)^{2}\Big|K_{n}=k\right]\nonumber\\&+M_{1,2}\sum_{k\in I_{n}}\mu_{n}(k)\left\{\E\left[\left(\frac{V_{1}}{K_{n}}-\frac{r_{1}}{n}\right)^{2}\Big|K_{n}=k\right]\right\}^{1/2}\left\{\E\left[\left(\frac{V_{2}}{K_{n}}-\frac{r_{2}}{n}\right)^{2}\Big|K_{n}=k\right]\right\}^{1/2}\label{intermediate_11_4}\\
\leqslant{}&M\sum_{k\in I_{n}}\mu_{n}(k)\left\{\frac{r_{1}(n-r_{1})(n-k)}{k n^{2}(n-1)}+\frac{r_{2}(n-r_{2})(n-k)}{k n^{2}(n-1)}+\left(\frac{r_{1}(n-r_{1})(n-k)}{k n^{2}(n-1)}\right)^{1/2}\left(\frac{r_{2}(n-r_{2})(n-k)}{k n^{2}(n-1)}\right)^{1/2}\right\}\nonumber\\
\leqslant{}&3M/4\E\left[K_{n}^{-1}\right], \quad \text{since }r_{i}(n-r_{i})\leqslant n^{2}/4 \text{ for }i\in\{1,2\}, \text{ and } (n-k)\leqslant (n-1).\nonumber
\end{align}
Here, $V_{1}$ and $V_{2}$ are as defined in Experiment~\eqref{Exp_2}, and the step indicated by \eqref{intermediate_11_4} has been obtained by deducing the joint distribution of $V_{1}$ and $V_{2}$ from the description provided in \eqref{Exp_2}, as well as an application of the Cauchy-Schwarz inequality. The next step has been obtained by \eqref{exp:2_expectations_variances}.
\end{proof}

\begin{lemma}\label{lem:hypergeometric_approx_sample_size_iid}
\sloppy For $F_{n}$ as defined in \eqref{F_{n}_defn}, and $H_{0}$ as defined in \eqref{H_{0}_defn}, we have $|F_{n}(r_{1}/n,r_{2}/n)-H_{0}(r_{1}/n,r_{2}/n)|=O(n^{-1})$ as $n\rightarrow\infty$, for any $(r_{1}/n,r_{2}/n)\in\mathcal{S}_{n}$, where $\mathcal{S}_{n}$ is as defined in \eqref{S_{n}_defn}.
\end{lemma}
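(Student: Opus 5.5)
Since $\mu$ is supported on the finite set $[M]$ and $g$ takes values in $[0,1]$, it suffices to compare, for each fixed $k\in[M]$, the multivariate hypergeometric probabilities with the corresponding multinomial ones. We will show that, uniformly over $(r_{1}/n,r_{2}/n)\in\mathcal{S}_{n}$ and over $i,j\geqslant 0$ with $i+j\leqslant k$,
\begin{equation*}
\left|{r_{1}\choose i}{r_{2}\choose j}{n-r_{1}-r_{2}\choose k-i-j}\left\{{n\choose k}\right\}^{-1}-{k\choose i}{k-i\choose j}x^{i}y^{j}(1-x-y)^{k-i-j}\right|\leqslant \frac{c_{k}}{n},
\end{equation*}
where $x=r_{1}/n$, $y=r_{2}/n$, and $c_{k}$ depends only on $k$. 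Summing over the at most $(k+1)^{2}$ pairs $(i,j)$, using $0\leqslant g\leqslant 1$, and then averaging against $\mu(k)$ over $k\in[M]$ gives
\begin{equation*}
|F_{n}-H_{0}|\leqslant n^{-1}\sum_{k\leqslant M}\mu(k)(k+1)^{2}c_{k}=O(n^{-1}).
\end{equation*}

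For the pointwise bound, fix $k$ and assume $n\geqslant 2M$. Write the hypergeometric probability as
\begin{equation*}
{k\choose i}{k-i\choose j}\frac{(r_{1})_{i}\,(r_{2})_{j}\,(n-r_{1}-r_{2})_{k-i-j}}{(n)_{k}},
\end{equation*}
where $(m)_{\ell}$ denotes the falling factorial. Dividing every factor by $n$, the numerator becomes $\prod_{a<i}(x-a/n)\prod_{b<j}(y-b/n)\prod_{c<k-i-j}(1-x-y-c/n)$, and the denominator becomes $\prod_{d<k}(1-d/n)$.

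\textbf{Numerator.} Each factor differs from $x$, $y$ or $1-x-y$ respectively by at most $k/n$, and all factors lie in $[-k/n,1]$. A telescoping product estimate ($|\prod a_{l}-\prod b_{l}|\leqslant\sum_{l}|a_{l}-b_{l}|$ when all $|a_{l}|,|b_{l}|\leqslant 1$, valid once $k/n\leqslant 1$) shows that the numerator differs from $x^{i}y^{j}(1-x-y)^{k-i-j}$ by at most $k^{2}/n$.

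\textbf{Denominator.} The denominator satisfies $1\geqslant\prod_{d<k}(1-d/n)\geqslant 1-k^{2}/(2n)\geqslant 1/2$ for $n\geqslant k^{2}$. Hence its reciprocal is $1+O(k^{2}/n)$.

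Combining the two estimates, together with the fact that the multinomial term lies in $[0,1]$, yields $c_{k}=O\!\left({k\choose i}{k-i\choose j}k^{2}\right)$, which is bounded by $3^{k}k^{2}$ times a constant.

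The only slightly delicate point is the uniformity in $r_{1},r_{2}$, including boundary cases such as $r_{1}<i$. In that case the hypergeometric term vanishes, because a factor $(r_{1}-a)$ with $a=r_{1}$ appears. The multinomial term, meanwhile, is at most $x^{i}\leqslant (k/n)^{i}$, which is $O(1/n)$ since $i\geqslant 1$. This case is in fact already covered by the factorwise bound above, since each shifted factor $x-a/n$ may be negative but has absolute value at most $k/n$. So no case split is needed, provided we state the telescoping lemma for factors in $[-1,1]$. All constants depend only on $M$, which gives the claimed $O(n^{-1})$ bound uniformly on $\mathcal{S}_{n}$.
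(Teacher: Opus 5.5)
Your proposal is correct and follows essentially the paper's argument: write the hypergeometric probability via falling factorials, divide numerator and denominator by $n^{k}$, and compare factor by factor using a telescoping product bound, which gives an $O(n^{-1})$ bound uniform on $\mathcal{S}_{n}$ with a constant depending only on $M$. The only cosmetic difference is in the normalizing factor $\prod_{w<k}(1-w/n)$. The paper folds it into the multinomial factors and then multiplies through by the bounded constant $\prod_{w<M}(1-w/n)^{-1}$, whereas you keep it separate and show $1/\prod_{d<k}(1-d/n)=1+O(k^{2}/n)$.
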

\begin{proof}
For finite sequences $\{a_{i}:i\in[m]\}$ and $\{b_{i}:i\in[m]\}$ of reals, with $\max\{|a_{i}|,|b_{i}|\}\leqslant c$ for all $i\in[m]$:
\begin{align}
{}&\left|\prod_{i=1}^{m}a_{i}-\prod_{i=1}^{m}b_{i}\right|\leqslant\left|\prod_{i=1}^{m}a_{i}-b_{1}\prod_{i=2}^{m}a_{i}\right|+\left|b_{1}\prod_{i=2}^{m}a_{i}-\prod_{i=1}^{m}b_{i}\right|\leqslant |a_{1}-b_{1}|\left|\prod_{i=2}^{m}a_{i}\right|+|b_{1}|\left|\prod_{i=2}^{m}a_{i}-\prod_{i=2}^{m}b_{i}\right|\nonumber\\
\leqslant{}& c^{m-1}|a_{1}-b_{1}|+c\left|\prod_{i=2}^{m}a_{i}-\prod_{i=2}^{m}b_{i}\right|\leqslant \cdots \leqslant c^{m-1}\sum_{i=1}^{m}\left|a_{i}-b_{i}\right|.\label{general_product_difference_bound}
\end{align}
Utilizing \eqref{general_product_difference_bound} and the fact that each factor in the products below has absolute value bounded above by $1$, we have, for all $(r_{1}/n,r_{2}/n)\in\mathcal{S}_{n}$ and all $i,j\in\mathbb{N}_{0}$ with $i+j\leqslant k$ and $k\in[M]$:
\begin{align}
{}&\left|{r_{1}\choose i}{r_{2}\choose j}{n-r_{1}-r_{2}\choose k-i-j}{n\choose k}^{-1}-{k\choose i}{k-i\choose j}\left(\frac{r_{1}}{n}\right)^{i}\left(\frac{r_{2}}{n}\right)^{j}\left(1-\frac{r_{1}}{n}-\frac{r_{2}}{n}\right)^{k-i-j}\right|\nonumber\\
={}&{k\choose i}{k-i\choose j}\frac{(n-k)!n^{k}}{n!}\Bigg|\prod_{s=0}^{i-1}\left(\frac{r_{1}}{n}-\frac{s}{n}\right)\prod_{t=0}^{j-1}\left(\frac{r_{2}}{n}-\frac{t}{n}\right)\prod_{u=0}^{k-i-j-1}\left(1-\frac{r_{1}}{n}-\frac{r_{2}}{n}-\frac{u}{n}\right)-\left\{\prod_{s=0}^{i-1}\frac{r_{1}}{n}\left(1-\frac{s}{n}\right)\right\}\nonumber\\&\left\{\prod_{t=0}^{j-1}\frac{r_{2}}{n}\left(1-\frac{t+i}{n}\right)\right\}\left\{\prod_{u=0}^{k-i-j-1}\left(1-\frac{r_{1}}{n}-\frac{r_{2}}{n}\right)\left(1-\frac{u+i+j}{n}\right)\right\}\Bigg|\nonumber\\
\leqslant{}&\Bigg[\left|\prod_{s=0}^{i-1}\left(\frac{r_{1}}{n}-\frac{s}{n}\right)-\prod_{s=0}^{i-1}\left\{\frac{r_{1}}{n}\left(1-\frac{s}{n}\right)\right\}\right|\left|\prod_{t=0}^{j-1}\left(\frac{r_{2}}{n}-\frac{t}{n}\right)\prod_{u=0}^{k-i-j-1}\left(1-\frac{r_{1}}{n}-\frac{r_{2}}{n}-\frac{u}{n}\right)\right|+\left|\prod_{s=0}^{i-1}\left\{\frac{r_{1}}{n}\left(1-\frac{s}{n}\right)\right\}\right|\nonumber\\&\left|\prod_{t=0}^{j-1}\left(\frac{r_{2}}{n}-\frac{t}{n}\right)-\prod_{t=0}^{j-1}\left\{\frac{r_{2}}{n}\left(1-\frac{t+i}{n}\right)\right\}\right|\left|\prod_{u=0}^{k-i-j-1}\left(1-\frac{r_{1}}{n}-\frac{r_{2}}{n}-\frac{u}{n}\right)\right|+\Bigg|\prod_{s=0}^{i-1}\left\{\frac{r_{1}}{n}\left(1-\frac{s}{n}\right)\right\}\nonumber\\&\prod_{t=0}^{j-1}\left\{\frac{r_{2}}{n}\left(1-\frac{t+i}{n}\right)\right\}\Bigg|\left|\prod_{u=0}^{k-i-j-1}\left(1-\frac{r_{1}}{n}-\frac{r_{2}}{n}-\frac{u}{n}\right)-\prod_{u=0}^{k-i-j-1}\left(1-\frac{r_{1}}{n}-\frac{r_{2}}{n}\right)\left(1-\frac{u+i+j}{n}\right)\right|\Bigg]\nonumber\\&{k\choose i}{k-i\choose j}\prod_{w=0}^{k-1}\left(1-\frac{w}{n}\right)^{-1}\nonumber\\
\leqslant{}&\Bigg[\sum_{s=0}^{i-1}\frac{s}{n}\left(1-\frac{r_{1}}{n}\right)+\sum_{t=0}^{j-1}\left\{\frac{t}{n}\left(1-\frac{r_{2}}{n}\right)+\frac{r_{2}i}{n^{2}}\right\}+\sum_{u=0}^{k-i-j-1}\left\{\frac{u}{n}\left(\frac{r_{1}}{n}+\frac{r_{2}}{n}\right)+\frac{i+j}{n}\left(1-\frac{r_{1}}{n}-\frac{r_{2}}{n}\right)\right\}\Bigg]\nonumber\\&{k\choose i}{k-i\choose j}\prod_{w=0}^{M-1}\left(1-\frac{w}{n}\right)^{-1}\leqslant c_{M}{k\choose i}{k-i\choose j}\frac{k^{2}}{n}, \quad \text{where }c_{M}=\frac{7}{2}\prod_{w=0}^{M-1}\left(1-\frac{w}{n}\right)^{-1}.\label{hypergeometric_approximation_by_binomial}
\end{align}
From \eqref{F_{n}_defn} and \eqref{H_{0}_defn}, the fact that $0\leqslant g(x,y)\leqslant 1$ for each $(x,y)\in\mathcal{S}$ (mentioned right before \eqref{g_defn}), by triangle inequality and \eqref{hypergeometric_approximation_by_binomial}, we obtain 
\begin{equation}
\left|F_{n}\left(\frac{r_{1}}{n},\frac{r_{2}}{n}\right)-H_{0}\left(\frac{r_{1}}{n},\frac{r_{2}}{n}\right)\right|\leqslant\sum_{k=1}^{M}\mu(k)\sum_{i=0}^{k}\sum_{j=0}^{k-i}g\left(\frac{i}{k},\frac{j}{k}\right)c_{M}{k\choose i}{k-i\choose j}\frac{k^{2}}{n}\leqslant \frac{c_{M}M^{2}3^{M}}{n}=O(n^{-1}).\nonumber\qedhere
\end{equation} 
\end{proof}

\begin{lemma}\label{lem:a.s.convergence}
Let $\mathcal{A}$, and $\mathcal{A}_{n}\subset \mathcal{A}$ for each $n\in\mathbb{N}$, be compact subsets of $\mathbb{R}^{d}$. Let $Q$, taking values in $\mathcal{A}$, and $Q_{n}$, taking values in $\mathcal{A}_{n}$, for each $n\in\mathbb{N}$, be random variables, such that $\{Q_{n}\}$ converges almost surely to $Q$. Let $f_{n}:\mathcal{A}_{n}\rightarrow\mathbb{R}$ for each $n$, and $f:\mathcal{A}\rightarrow\mathbb{R}$ with $f\in\mathcal{C}^{(0)}(\mathcal{A})$. Let $\beta_{n}>0$ for each $n$ be such that 
\begin{equation}
\sup\left\{\left|f_{n}(x)-f(x)\right|:x\in \mathcal{A}_{n}\right\}\leqslant \beta_{n} \text{ for each } n\in\mathbb{N} \quad \text{and} \quad \beta_{n}\rightarrow 0 \text{ as } n\rightarrow\infty.\label{uniform_convergence_criterion} 
\end{equation}
Then the Cesaro average
\begin{equation}
\frac{1}{n}\sum_{i=1}^{n}f_{i}(Q_{i}) \convas f(Q) \text{ as }n\rightarrow\infty.\label{Cesaro_convergence}
\end{equation} 

Alternatively, let $Q$ as well as $Q_{n}$, for each $n\in\mathbb{N}$, take values in $\mathcal{A}$, and let $Q_{n}\convas Q$. Let $f_{n}:\mathcal{A}\rightarrow\mathbb{R}$ and $f_{n}\in\mathcal{C}^{(0)}(\mathcal{A})$ for each $n\in\mathbb{N}$, and $f:\mathcal{A}\rightarrow\mathbb{R}$, such that $\{f_{n}\}$ converges uniformly to $f$ on $\mathcal{A}$. Then, once again, the conclusion of \eqref{Cesaro_convergence} holds. A special case of this scenario is where $f_{n}\equiv f$ for each $n\in\mathbb{N}$.
\end{lemma}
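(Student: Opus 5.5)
We prove the first assertion by splitting the Ces\`aro average into two pieces:
\begin{equation}
\frac{1}{n}\sum_{i=1}^{n}f_{i}(Q_{i})-f(Q)=\frac{1}{n}\sum_{i=1}^{n}\left\{f_{i}(Q_{i})-f(Q_{i})\right\}+\frac{1}{n}\sum_{i=1}^{n}\left\{f(Q_{i})-f(Q)\right\}.\nonumber
\end{equation}
Since $Q_{i}\in\mathcal{A}_{i}$, the uniform bound \eqref{uniform_convergence_criterion} controls each summand in the first piece: $|f_{i}(Q_{i})-f(Q_{i})|\leqslant\beta_{i}$ holds deterministically. Because $\beta_{i}\rightarrow 0$, the Ces\`aro mean $n^{-1}\sum_{i\leqslant n}\beta_{i}\rightarrow 0$ by the elementary Ces\`aro lemma, so the first piece tends to $0$ surely.

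For the second piece we fix an $\omega$ in the full-measure event on which $Q_{n}(\omega)\rightarrow Q(\omega)$. All $Q_{i}(\omega)$ and $Q(\omega)$ lie in $\mathcal{A}$, and $f$ is continuous on $\mathcal{A}$, so $f(Q_{i}(\omega))\rightarrow f(Q(\omega))$. Continuity on $\mathcal{A}$ is exactly what is used here, and it suffices because every $\mathcal{A}_{i}$ is contained in $\mathcal{A}$. A second application of the Ces\`aro lemma then shows that the second piece vanishes on this event, which gives \eqref{Cesaro_convergence}.

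For the alternative assertion, every $Q_{i}$ takes values in $\mathcal{A}$. Uniform convergence of $f_{n}$ to $f$ on $\mathcal{A}$ supplies $\beta_{n}=\sup_{\mathcal{A}}|f_{n}-f|\rightarrow 0$ (if some $\beta_{n}$ equals $0$, we replace it by $\beta_{n}+1/n$ to keep it strictly positive). The first piece is then handled exactly as before. For the second piece we need $f\in\mathcal{C}^{(0)}(\mathcal{A})$, and this holds because $f$ is a uniform limit of the continuous functions $f_{n}$ on $\mathcal{A}$. With that, the argument above applies verbatim with $\mathcal{A}_{n}=\mathcal{A}$. The special case $f_{n}\equiv f$ is immediate, since then $\beta_{n}$ can be taken arbitrarily small and positive, for instance $1/n$.

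The only point needing care, and the main potential obstacle, is continuity at the limit point: $f$ must be continuous on all of $\mathcal{A}$, including the point $Q(\omega)$, which need not lie in any $\mathcal{A}_{n}$. In the second version this continuity is not assumed outright and has to be derived from uniform convergence, as done above.
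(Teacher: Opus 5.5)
Your proof is correct and follows essentially the same route as the paper, which also passes through $f(Q_i)$, bounds $|f_i(Q_i)-f(Q_i)|$ by $\beta_i$, and reduces the second assertion to the first with $\mathcal{A}_n=\mathcal{A}$, obtaining continuity of $f$ from uniform convergence. The only difference is cosmetic: the paper runs an explicit $\epsilon/3$ argument that discards the first $N_0$ terms and uses uniform continuity of $f$ on the compact set $\mathcal{A}$, whereas you invoke the Ces\`aro lemma twice and need only continuity of $f$ at $Q(\omega)$.
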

\begin{proof}
Let $\Omega$ be the sample space on which $Q$, and $Q_{n}$ for each $n$, have been defined, and let $\Omega'\subset\Omega$ consist of all sample points $\omega$ such that $Q_{n}(\omega)\rightarrow Q(\omega)$ as $n\rightarrow\infty$. Evidently, $\Prob[\Omega']=1$. Given $\delta>0$, for each $\omega\in\Omega'$ there exists $N_{\delta}(\omega)\in\mathbb{N}$ such that $\left|\left|Q_{n}(\omega)-Q(\omega)\right|\right|_{2}<\delta$ for all $n\geqslant N_{\delta}(\omega)$. 

In the first part of the statement of Lemma~\ref{lem:a.s.convergence}, since $f\in\mathcal{C}^{(0)}(\mathcal{A})$ and $\mathcal{A}$ is compact, $f$ must be uniformly continuous throughout $\mathcal{A}$. Thus, given $\epsilon>0$, there exists $\delta_{f}(\epsilon)>0$ such that $\left|f(x)-f(y)\right|<\epsilon/3$ whenever $x,y\in \mathcal{A}$ and $||x-y||_{2}<\delta_{f}(\epsilon)$. Next, since $\beta_{n}\rightarrow 0$ as $n\rightarrow \infty$, given any $\epsilon>0$, there exists $R_{\epsilon}\in\mathbb{N}$ such that $\beta_{n}<\epsilon/3$ for all $n\geqslant R_{\epsilon}$. Fixing $\omega\in\Omega'$ and $\epsilon>0$, setting $N_{0}=\max\{N_{\delta_{f}(\epsilon)}(\omega),R_{\epsilon}\}$, defining $C=\sum_{i\in[N_{0}-1]}\left|f_{i}(Q_{i}(\omega))-f(Q(\omega))\right|$ (which remains constant as $n$ grows), we have, for $n\geqslant \max\{N_{0},3C/\epsilon\}$:
\begin{align}
\left|\frac{1}{n}\sum_{i=1}^{n}f_{i}(Q_{i}(\omega))-f(Q(\omega))\right|\leqslant{}&\frac{1}{n}\sum_{i=1}^{N_{0}-1}\left|f_{i}(Q_{i}(\omega))-f(Q(\omega))\right|+\frac{1}{n}\sum_{i=N_{0}}^{n}\left|f_{i}(Q_{i}(\omega))-f(Q(\omega))\right|\nonumber\\
\leqslant{}&\frac{C}{n}+\frac{1}{n}\sum_{i=N_{0}}^{n}\left|f_{i}(Q_{i}(\omega))-f(Q_{i}(\omega))\right|+\frac{1}{n}\sum_{i=N_{0}}^{n}\left|f(Q_{i}(\omega))-f(Q(\omega))\right|\nonumber\\
\leqslant{}&\frac{C}{n}+\frac{1}{n}\sum_{i=N_{0}}^{n}\beta_{i}+\frac{(n-N_{0}+1)\epsilon}{3n}<\frac{\epsilon}{3}+\frac{2(n-N_{0}+1)\epsilon}{3n}<\epsilon.\nonumber
\end{align}
Since this is true for each $\omega\in\Omega'$, we conclude that $n^{-1}\sum_{i=1}^{n}f_{i}(Q_{i})$ converges to $f(Q)$ almost surely.

The second part of Lemma~\ref{lem:a.s.convergence} is a special case of the first: we set $\mathcal{A}_{n}=\mathcal{A}$ for each $n$, and $\{f_{n}\}$ converging uniformly to $f$ on $\mathcal{A}$ ensures that \eqref{uniform_convergence_criterion} holds. Since $f_{n}\in\mathcal{C}^{(0)}(\mathcal{A})$ and $\{f_{n}\}$ converges uniformly to $f$ on $\mathcal{A}$, we have $f\in\mathcal{C}^{(0)}(\mathcal{A})$ as well. Thus, all criteria mentioned in the first part of the statement of Lemma~\ref{lem:a.s.convergence} are satisfied, so that \eqref{Cesaro_convergence} is also true here.
\end{proof}

\begin{lemma}\label{lem:stronger_convergence_criteria_zhang}
When any one of \eqref{thm:main_6:regime_1:series_criteria}, \eqref{thm:main_6:regime_2:series_criteria} and \eqref{thm:main_6:regime_3.2:series_criteria} holds, \eqref{thm:main_2:C^{2}_series_convergence} holds. When \eqref{thm:main_6:regime_3.1:series_criteria} holds, \eqref{thm:main_2:C^{1}_series_convergence} holds.
\end{lemma}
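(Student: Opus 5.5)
The common tool is summation by parts (Abel summation), which turns growth bounds on the partial sums $S_{n}=\sum_{i=N}^{n}a_{i}$ into convergence of $\sum_{n}a_{n}/(n+1)$. Here $a_{i}\geqslant 0$ is either $\E[K_{i}^{-1}]$ or the minimum appearing in \eqref{thm:main_6:regime_3.1:series_criteria}. Although the criteria say ``almost surely'', these sums are deterministic, so they are simply statements about real sequences.

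\textbf{The three criteria of the form $S_{n}=o(n^{\gamma})$ with $\gamma<1$.} This covers the first criterion of \eqref{thm:main_6:regime_1:series_criteria} (where $\sqrt{n/\log n}=O(n^{1/2})$), \eqref{thm:main_6:regime_2:series_criteria} (where $\gamma=1-\rho-\epsilon<1$), and \eqref{thm:main_6:regime_3.2:series_criteria} and \eqref{thm:main_6:regime_3.1:series_criteria} (where $\gamma=1/2$). Abel summation gives
\begin{equation}
\sum_{i=N}^{n}\frac{a_{i}}{i+1}=\frac{S_{n}}{n+1}+\sum_{i=N}^{n-1}S_{i}\left(\frac{1}{i+1}-\frac{1}{i+2}\right).\nonumber
\end{equation}
The boundary term tends to $0$. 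In the sum, each term is $S_{i}/((i+1)(i+2))=o(i^{\gamma-2})$, which is summable because $\gamma<1$. Since $a_{i}\geqslant 0$, the partial sums are monotone and bounded, so the series converges. With $a_{i}=\E[K_{i}^{-1}]$ this is \eqref{thm:main_2:C^{2}_series_convergence}, and with the minimum term it is \eqref{thm:main_2:C^{1}_series_convergence}.

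\textbf{The second criterion of \eqref{thm:main_6:regime_1:series_criteria}.} Here $T_{n}=\sum_{i=N}^{n}b_{i}=o(\sqrt{\log n})$, where $b_{i}=\E[K_{i}^{-1}](i+1)^{-1/2}\geqslant 0$. We write $a_{i}/(i+1)=b_{i}(i+1)^{-1/2}$ and apply Abel summation again, now with weights $w_{i}=(i+1)^{-1/2}$:
\begin{equation}
\sum_{i=N}^{n}b_{i}w_{i}=T_{n}w_{n}+\sum_{i=N}^{n-1}T_{i}(w_{i}-w_{i+1}).\nonumber
\end{equation}
The boundary term is $o(\sqrt{\log n}/\sqrt{n})\to 0$. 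For the sum, $w_{i}-w_{i+1}=O(i^{-3/2})$, so each term is $o(\sqrt{\log i}\,i^{-3/2})$, which is summable. Positivity again gives convergence.

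\textbf{Main obstacle.} There is no real obstacle. The only point needing care is the monotonicity/positivity used to pass from bounded partial sums to convergence, and this is immediate because every $a_{i}$ is nonnegative.
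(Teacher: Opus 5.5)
Your proposal is correct and follows essentially the same summation-by-parts argument as the paper, including the reweighting by $(i+1)^{-1/2}$ for the second criterion of \eqref{thm:main_6:regime_1:series_criteria}. Treating all the $o(n^{\gamma})$, $\gamma<1$, cases together is just a more compact version of the paper's case-by-case bounds.
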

\begin{proof}
Let $a_{n}=\E[K_{n}^{-1}]$ for $n\geqslant N$. We set $b_{N-1}=0$ and $b_{n}=\sum_{i=N}^{n}a_{i}$ for $n\geqslant N$, so that the partial sums of the series in \eqref{thm:main_2:C^{2}_series_convergence} can be expressed as:
\begin{align}
\sum_{i=N}^{n}(i+1)^{-1}a_{i}=\sum_{i=N}^{n}(i+1)^{-1}(b_{i}-b_{i-1})=(n+1)^{-1}b_{n}+\sum_{i=N}^{n-1}(i+1)^{-1}(i+2)^{-1}b_{i}.\label{thm:main_2:C^{2}_series_partial_sum_1}
\end{align}
When the first criterion of \eqref{thm:main_6:regime_1:series_criteria} holds, $b_{n}\leqslant C\sqrt{n/\log n}$ for $n\geqslant N$, for some $C>0$, so that from \eqref{thm:main_2:C^{2}_series_partial_sum_1}:
\begin{align}
\sum_{i=N}^{n}(i+1)^{-1}a_{i}\leqslant C(n\log n)^{-1/2}+C\sum_{i=N}^{n-1}(i+1)^{-1}(i+2)^{-1}\sqrt{i/\log i}\leqslant C(n\log n)^{-1/2}+C\sum_{i=N}^{n-1}i^{-3/2}(\log i)^{-1/2},\nonumber
\end{align}
which converges as $n\rightarrow\infty$. When \eqref{thm:main_6:regime_2:series_criteria} holds, $b_{n}\leqslant C n^{1-\rho-\epsilon}$ for $n\geqslant N$, for some $C>0$. From \eqref{thm:main_2:C^{2}_series_partial_sum_1}:
\begin{align}
\sum_{i=N}^{n}(i+1)^{-1}a_{i}\leqslant Cn^{-\rho-\epsilon}+C\sum_{i=N}^{n-1}(i+1)^{-1}(i+2)^{-1}i^{1-\rho-\epsilon}\leqslant Cn^{-\rho-\epsilon}+C\sum_{i=N}^{n-1}i^{-1-\rho-\epsilon},\nonumber
\end{align}
which converges as $n\rightarrow\infty$. When \eqref{thm:main_6:regime_3.2:series_criteria} holds, $b_{n}\leqslant C\sqrt{n}$ for $n\geqslant N$, for some $C>0$. From \eqref{thm:main_2:C^{2}_series_partial_sum_1}:
\begin{align}
\sum_{i=N}^{n}(i+1)^{-1}a_{i}\leqslant Cn^{-1/2}+C\sum_{i=N}^{n-1}(i+1)^{-1}(i+2)^{-1}\sqrt{i}\leqslant Cn^{-1/2}+C\sum_{i=N}^{n-1}i^{-3/2},\label{thm:main_2:C^{2}_series_partial_sum_1_bound_C^{2}}
\end{align}
which converges as $n\rightarrow\infty$. 

Next, let $a_{n}=(n+1)^{-1/2}\E[K_{n}^{-1}]$ for $n\geqslant N$, and we set $b_{N-1}=0$, while $b_{n}=\sum_{i=N}^{n}a_{i}$ for each $n\geqslant N$. Now the partial sums of the series in \eqref{thm:main_2:C^{2}_series_convergence} can be expressed as:
\begin{align}
\sum_{i=N}^{n}(i+1)^{-1/2}a_{i}=\sum_{i=N}^{n}(i+1)^{-1/2}(b_{i}-b_{i-1})=\frac{b_{n}}{\sqrt{n+1}}+\sum_{i=N}^{n-1}\frac{b_{i}}{\sqrt{(i+1)(i+2)}(\sqrt{i+1}+\sqrt{i+2})}.\label{thm:main_2:C^{2}_series_partial_sum_2}
\end{align}
When the second criterion of \eqref{thm:main_6:regime_1:series_criteria} holds, $b_{n}\leqslant C\sqrt{\log n}$ for all $n\geqslant N$, for some $C>0$, so that from \eqref{thm:main_2:C^{2}_series_partial_sum_2}:
\begin{align}
\sum_{i=N}^{n}(i+1)^{-1/2}a_{i}\leqslant C\frac{\sqrt{\log n}}{\sqrt{n+1}}+C\sum_{i=N}^{n-1}\frac{\sqrt{\log i}}{\sqrt{(i+1)(i+2)}(\sqrt{i+1}+\sqrt{i+2})}\leqslant C\frac{\sqrt{\log n}}{\sqrt{n+1}}+\frac{C}{2}\sum_{i=N}^{n-1}\frac{\sqrt{\log i}}{i^{3/2}},\nonumber
\end{align}
which converges as $n\rightarrow\infty$. This completes the proof that any one of \eqref{thm:main_6:regime_1:series_criteria}, \eqref{thm:main_6:regime_2:series_criteria} and \eqref{thm:main_6:regime_3.2:series_criteria} implies \eqref{thm:main_2:C^{2}_series_convergence}.

Finally, let $a_{n}=\min\{\omega(\nabla g;(\E[K_{n}^{-1}])^{1/2})(\E[K_{n}^{-1}])^{1/2},\omega(\nabla g;\E[K_{n}^{-1}]/\E[K_{n}^{-1/2}])\E[K_{n}^{-1/2}]\}$, and $b_{n}=\sum_{i=N}^{n}a_{i}$ for $n\geqslant N$, with $b_{N-1}=0$. Then, the partial sums of the series in \eqref{thm:main_2:C^{1}_series_convergence} take the same form as shown in \eqref{thm:main_2:C^{2}_series_partial_sum_1}. When \eqref{thm:main_6:regime_3.1:series_criteria} holds, $b_{n}\leqslant C\sqrt{n}$ for $n\geqslant N$, for some $C>0$, so that the inequality in \eqref{thm:main_2:C^{2}_series_partial_sum_1_bound_C^{2}} is also true here, implying \eqref{thm:main_2:C^{1}_series_convergence}.
\end{proof}

\bibliography{ERW_bib}
\end{document}